\definecolor{brick}{HTML}{FF0800}
\DeclareRobustCommand{\cev}[1]{%
\mathpalette\do@cev{#1}%
}
\newcommand{\do@cev}[2]{%
 \fix@cev{#1}{+}%
 \reflectbox{$\m@th#1\vec{\reflectbox{$\fix@cev{#1}{-}\m@th#1#2\fix@cev{#1}{+}$}}$}%
 \fix@cev{#1}{-}%
}
\newcommand{\fix@cev}[2]{%
 \ifx#1\displaystyle
  \mkern#23mu
 \else
  \ifx#1\textstyle
   \mkern#23mu
  \else
   \ifx#1\scriptstyle
    \mkern#22mu
   \else
    \mkern#22mu
   \fi
  \fi
 \fi
}
\theoremstyle{plain}
  \newtheorem{theorem}{Theorem}[section]
  \newtheorem{lemma}[theorem]{Lemma}
  \newtheorem{corollary}[theorem]{Corollary}
  \newtheorem{fact}[theorem]{Fact}
\theoremstyle{definition}
  \newtheorem{definition}[theorem]{Definition}
  \newtheorem{example}[theorem]{Example}
  \newtheorem{PA}{Principal Assumption}
\theoremstyle{remark}
  \newtheorem{remark}[theorem]{Remark}
  \numberwithin{equation}{section}
\def \N {\mathbb N}
\def \R {\mathbb R}
\newcommand*{\e}[1]{\text{e}^{#1}}
\begin{document}

\begin{frontmatter}

\title{Multitype branching processes in random environments with not strictly positive expectation matrices}
\runtitle{Multitype branching processes in random environments}

\begin{aug}
	\author[A]{\fnms{Vilma}~\snm{Orgov\'anyi}\ead[label=e1]{orgovanyi.vilma@gmail.com}}
	\and
	\author[A]{\fnms{K\'aroly}~\snm{Simon}\ead[label=e2]{karoly.simon51@gmail.com}}

	%%%%%%%%%%%%%%%%%%%%%%%%%%%%%%%%%%%%%%%%%%%%%%
	%% Addresses                                %%
	%%%%%%%%%%%%%%%%%%%%%%%%%%%%%%%%%%%%%%%%%%%%%%
	\address[A]{Department of Stochastics, Institute of Mathematics, Budapest University of Technology and Economics, M\H{u}egyetem rkp. 3., H-1111 Budapest, Hungary\printead[presep={,\ }]{e1,e2}}
	
	\end{aug}
	
\begin{abstract}
	It is well known that under some conditions the almost sure survival probability of a multitype branching processes in random environment is positive if the Lyapunov exponent corresponding to the expectation matrices is positive, and zero if the Lyapunov exponent is negative. The goal of this note is to establish similar results when certain positivity conditions on the expectation matrices are not met. One application of such a result is to classify the positivity of Lebesgue measure of certain overlapping random self-similar sets in the line.
\end{abstract}

\begin{keyword}[class=MSC]
	\kwd[Primary ]{60J80}
	\kwd{60J85}
	\kwd[; secondary ]{28A80}
	\end{keyword}
	
	\begin{keyword}
	\kwd{Multitype branching process in random environment}
	\kwd{random fractals}
	\end{keyword}
	
	\end{frontmatter}

\section{Introduction}

In this paper we consider the extinction probability of multitype branching processes in random environments $\left\{\mathbf{Z}_n\right\}_{n=1}^{\infty}$, formally defined starting in Section \ref{w88}.
Our main theorem (Theorem \ref{z68}) states that, under mild conditions, the positivity of the Lyapunov exponent corresponding to the expectation matrices determines the positivity of the survival probability.

Informally, a branching process is:
\begin{itemize}
	\item \textit{multitype} if each individual has a type and the type determines the distribution according to which it gives birth to different types of individuals;
	\item \textit{temporally non-homogeneous} or \textit{in varying environment} if we allow the offspring distribution to change over time in a predefined deterministic manner; and
	\item \textit{in a random environment} if the temporal non-homogeneity is non-deterministic.
\end{itemize}

More precisely, let $N \geq 2$ (the number of types) and assume we are given a stationary ergodic sequence
$\left\{\theta _n\right\}_{n \geq 1}$ of random variables called the environmental sequence. For almost every realization we consider an associated
sequence of $N$-dimensional vector of $N$-variate probability generating functions (\textit{pgf}s)
$$\left\{\mathbf{f}_{\theta _n}(\mathbf{s})=
	\left(f_{\theta _n}^{(0) }(\mathbf{s}),\dots ,f _{\theta _n}^{(N-1) }(\mathbf{s})
	\right)\right\}_{n \geq 1}.$$
The $i$-th component $f _{\theta _n}^{(i) }(\mathbf{s})$
of $\mathbf{f}_{\theta _n}(\mathbf{s})$ is
$$
	f _{\theta _n}^{(i) }(\mathbf{s})=\sum_{\mathbf{ j}
		\in \mathbb{N}_0^N}
	f _{\theta _n}^{(i) }[\mathbf{j}]\cdot
	\prod _{k=0}^{ N-1}s _{k}^{j_k }\text{ for }
	\mathbf{s}=(s_0,\dots ,s_{N-1})\in [0,1]^{N},\,
	\mathbf{j}=(j_0,\dots ,j_{N-1})
$$
where $f _{\theta _n}^{(i) }[\mathbf{j}]$ is the probability that a level $n-1$ individual of type $i$ gives birth to $j_k$ individuals of type $k$ in environment $\theta _n$.

If we condition on a realization $\overline{\pmb{\theta}}=\left\{\theta _n\right\}_{n \geq 1}$ of the environmental process
then $$\left\{\mathbf{Z}_n(\overline{\pmb{\theta}} )=
	\left(Z _{n}^{(0) }(\overline{\pmb{\theta}}),\dots ,Z _{n}^{(N-1) }(\overline{\pmb{\theta}})\right)\right\}_{n=1}^{\infty  }$$
behaves like an $N$-dimensional temporally non-homogeneous branching process, where $Z _{n}^{(i) }(\overline{\pmb{\theta}})$ is the number of type
$i$ individuals in the $n$-th generation in environment $\overline{\pmb{\theta}}$. In this way the offspring distribution of a type $i$ individual in the $n$-th generation is given by $f_{\theta_{n+1}}^{(i)}$.
For a $\theta _n$ we consider the ($N\times N$) expectation matrix $\mathbf{M}_ {\theta _n}$,
$$
	\mathbf{M}_{\theta _n}(i,j):=\frac{\partial f_{\theta _n}^{(i)}}{\partial s_j}(\mathbf{1}),
$$
where $\mathbf{1}\in\mathbb{R}^N$ is the vector with all components equal to $1$. For a non-negative matrix $\mathbf{M}$ let $\|\mathbf{M}\|$ denote the sum of all of its elements.
The Lyapunov exponent corresponding to the matrices and the environmental sequence is 
\begin{equation*}
	\lambda :=\lim\limits_{n\to \infty }
	\frac{1}{n}\log \| \mathbf{M}_{\theta _1}\cdots \mathbf{M}_{\theta _n} \|,
\end{equation*}
where the limit exists and is the same constant for almost all
$\left\{\theta _n\right\}_n$.

\subsection {Our result in a special case}
In the special case when we assume that $\overline{\pmb{\theta}}=\left\{\theta _n\right\}_{n \geq 1}$ is a stationary ergodic process over a finite alphabet, our result implies the following:
\begin{theorem}\label{w77}
	Assume that
	\begin{enumerate}
		[label=\emph{(\alph*)}]
		\item $\overline{\pmb{\theta}}=\left\{\theta _n\right\}_{n \geq 1}$ is a stationary ergodic process over a finite alphabet $[L]:=\left\{0,\dots ,L-1\right\}$.
		\item For every $\theta \in [L]$
		      the expectation matrix $\mathbf{M}_{\theta}$ is allowable, i.e. it contains a strictly positive number in each row and column.
		\item There exists an $n$ and a $(\theta _1,\dots ,\theta _n)\in [L]^n$ such that
		      all elements of the product matrix $\mathbf{M}_{\theta _1}\cdots\mathbf{M}_{\theta _n}$ are strictly positive.
		\item There exists a $K<\infty$ such that for every $\theta \in [L]$ and $i,j\in [N]$,
		      $
			      \frac{\partial^2 f_{\theta}^{(k)}}{\partial s_j \partial s_i}(\mathbf{1})<K
		      $.
	\end{enumerate}
	Under these conditions we have:
	\begin{enumerate}
		\item If $\lambda > 0$ then in almost every environment $\overline{\pmb{\theta}}$, starting with a single individual of arbitrary type,
		      the process $\mathbf{Z}_n(\overline{\pmb{\theta}} )$ does not die out with positive probability. Moreover, $\lim_{n\to\infty} n^{-1}\log(\|\mathbf{Z}_n\|)=\lambda$, conditioned on the process does not die out. \label{x49}
		\item If $\lambda < 0$ then
		      in almost every environment $\overline{\pmb{\theta}}$, starting with a single individual of arbitrary type, the process $\mathbf{Z}_n(\overline{\pmb{\theta}} )$ does die out almost surely.\label{x48}
		\item If $\lambda=0$ and with positive probability we can find $\theta$ such that for every type $i$ the probability that a type $i$ individual gives birth to only $0$ or $1$ child is less than $1$ (i.e. for all $i$ $f _{\theta _n}^{(i) }[\mathbf{0}]+\sum_{j\in[N]}f _{\theta _n}^{(i) }[\mathbf{e}_j]<1$) then starting with a single individual of arbitrary type, the process $\mathbf{Z}_n(\overline{\pmb{\theta}} )$ does die out almost surely.\label{x47}
	\end{enumerate}
\end{theorem}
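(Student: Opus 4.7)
The plan is to deduce Theorem~\ref{w77} from the general Theorem~\ref{z68} by verifying its hypotheses in the finite-alphabet setting. Conditions (b) and (c) together form the classical Hennion positivity condition on the random matrix products $\mathbf{M}_{\theta_1}\cdots\mathbf{M}_{\theta_n}$: they guarantee that the top Lyapunov exponent $\lambda$ is simple and that the (random) top Oseledec direction is unique. The finite alphabet together with (d) gives uniform upper bounds on the first and second moments of the offspring distributions, which is what downstream second-moment arguments require.

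For part \emph{(\ref{x48})} ($\lambda<0$) I would run the standard first-moment argument. Conditional on $\overline{\pmb\theta}$, starting from a single type-$i$ individual the mean vector of $\mathbf{Z}_n$ is the $i$-th row of $\mathbf{M}_{\theta_1}\cdots\mathbf{M}_{\theta_n}$, so $\mathbb{E}\!\left[\|\mathbf{Z}_n\|\,\big|\,\overline{\pmb\theta}\right]\le \|\mathbf{M}_{\theta_1}\cdots\mathbf{M}_{\theta_n}\| = e^{n\lambda+o(n)}$. Markov's inequality together with the Borel--Cantelli lemma then yields $\|\mathbf{Z}_n\|=0$ eventually, almost surely.

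For part \emph{(\ref{x49})} ($\lambda>0$) the strategy is a regeneration argument. By ergodicity, the good word $(\theta_1^\star,\dots,\theta_{n_0}^\star)$ from (c) occurs with positive density along $\overline{\pmb\theta}$; let $\tau_k$ denote its $k$-th occurrence time. Consider the sub-sampled branching process $\widetilde{\mathbf{Z}}_k:=\mathbf{Z}_{\tau_k}$. Its one-step expectation matrices all contain the strictly positive matrix $\mathbf{P}^\star:=\mathbf{M}_{\theta_1^\star}\cdots\mathbf{M}_{\theta_{n_0}^\star}$ as a factor and are therefore bounded below entrywise by a positive constant times $\mathbf{P}^\star$; its Lyapunov exponent equals $\widetilde\lambda=\lambda\cdot\mathbb{E}[\tau_{k+1}-\tau_k]>0$. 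Hence the classical Kesten--Stigum/Tanny theorem for MBPRE with strictly positive mean matrices and uniformly bounded second moments applies to $\widetilde{\mathbf{Z}}_k$, giving positive survival probability and growth rate $\widetilde\lambda$ on survival. Transferring this back to the original $\mathbf{Z}_n$, using (b) and (d) to control what happens between regeneration times, should yield $n^{-1}\log\|\mathbf{Z}_n\|\to\lambda$ on non-extinction.

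For part \emph{(\ref{x47})} ($\lambda=0$) I would combine the same regeneration scheme with the classical critical-MBPRE extinction dichotomy: the non-degeneracy hypothesis rules out the deterministic "each individual produces exactly one child" scenario, so the a.s.\ extinction statement of Athreya--Karlin/Tanny for critical MBPRE applies to $\widetilde{\mathbf{Z}}_k$, and extinction transfers trivially to $\mathbf{Z}_n$. The main obstacle I anticipate lies in the transfer step of part \emph{(\ref{x49})}: passing from survival and the growth rate $\widetilde\lambda$ of the sub-sampled process to the precise rate $\lambda$ for the original process. Between regenerations one needs to control both the typical multiplicative loss and the second moments; this is where allowability (b), the uniform bound (d), and the finiteness of the alphabet must be used in concert, and where the bulk of the technical work will go.
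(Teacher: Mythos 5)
Your first-moment argument for the subcritical case \emph{(\ref{x48})} is correct and in fact more self-contained than the paper's treatment (the paper obtains both \emph{(\ref{x48})} and \emph{(\ref{x47})} by citing Tanny's Theorem 9.6, whose ``Condition Q'' hypothesis is exactly part (2) of Theorem \ref{z68}). For the supercritical and critical cases, however, your plan quietly abandons the stated strategy of deducing Theorem \ref{w77} from Theorem \ref{z68} (which is the paper's route: finite alphabet plus (b) gives uniform allowability \eqref{z15} and goodness, Theorem \ref{z68} then gives $\mathbf{q}<\mathbf{1}$ a.e.\ as in Corollary \ref{x46}, and the growth rate and the $\lambda\le 0$ statements follow from Tanny's theorem via Condition Q and stability, the latter supplied by Hennion, Remark \ref{z52} and Corollaries \ref{x54}, \ref{x45}, Fact \ref{x44}), and replaces it by a regeneration scheme at occurrences of the positive word so that the classical strictly-positive theory applies. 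That is a legitimate alternative idea, but as written it has genuine gaps.

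Concretely: (i) the induced process $\widetilde{\mathbf{Z}}_k=\mathbf{Z}_{\tau_k}$ is an MBPRE over a countably infinite alphabet of blocks of unbounded length, so your claim that it has ``uniformly bounded second moments'' is false --- the second moments of a block offspring law grow exponentially in the block length, and condition (d) only bounds one-step moments; what the classical Athreya--Karlin/Tanny results need here are log-moment/integrability hypotheses for the block environment, which would have to be verified (plausibly via Kac's formula, since return times are integrable), and this verification is absent. (ii) The block environment is stationary and ergodic only under the induced (return-time) measure; the conclusion must then be transferred back to $\nu$-almost every original environment and to an arbitrary starting type at time $0$ (bridging up to $\tau_1$), steps you do not address. (iii) The passage from $\tau_k^{-1}\log\|\mathbf{Z}_{\tau_k}\|\to\lambda$ along regeneration times to $n^{-1}\log\|\mathbf{Z}_n\|\to\lambda$ along all times, and likewise the verification that the block process inherits the non-degeneracy needed in the critical case (the analogue of strong regularity, cf.\ Fact \ref{x44}), are exactly the places where the real work lies, and you only acknowledge them without supplying an argument. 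In short, the proposal sketches a viable alternative reduction to the classical positivity condition \eqref{x58}, but the essential technical content --- which the paper packages into Theorem \ref{z68} (especially part (2)) together with the Hennion and Tanny citations --- is not provided, and one of the stated justifications (uniform second-moment bounds for the induced process) is incorrect.
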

In the text, for part \ref{x49} see Corollary \ref{x46} and \ref{x54}, for parts \ref{x48} and \ref{x47} see Corollary \ref{x45} and Fact \ref{x44}.

\subsection{Corresponding literature}
The extinction problem for MBPREs was investigated by Athreya and Karlin in \cite[Theorem 8]{bp_renv}. They proved that under some conditions in almost every environment:
\begin{equation}
	\label{w79}
	\lambda < 0 \implies \text {almost sure extinction, and }
	\lambda > 0 \implies \text {survival with positive probability.}
\end{equation}
The conditions of \cite[Theorem 8]{bp_renv} include the assumption that for every $\theta $ and $i,j$ we have
$\mathbf{M}_\theta (i,j)>0$. At the same year Weissner (\cite{zbMATH03349140})
and later Tanny (\cite{zbMATH03738673}) weakened this assumption---they required that there exists a $k$
such that for all $\theta _1,\dots ,\theta _k$ of positive probability
the corresponding product of the expectation matrices is strictly positive, i.e.
\begin{equation}\label{x58}
\exists k,\; \forall \theta_1, \dots, \theta_k\text{ with } \, \nu(\theta_1, \dots, \theta_k)>0,\,\forall i, j :\; (\mathbf{M}_{\theta_1 }\cdots \mathbf{M}_{\theta_k})(i,j)>0,
\end{equation}
where $\nu$ is the distribution of the environmental sequence (see Definition \ref{z63}). This condition always fails in the case that at least one of the matrices is triangular and the environment is an i.i.d.\ sequence (assuming that every letter has positive probability).
In particular this happens in our motivating example described in Section \ref{w78}. In this note we weaken the positivity assumptions. More precisely, we assume that 
\begin{itemize}
	\item each of the expectation matrices are ``uniformly'' allowable (see Definition \ref{x33}, which always holds when we assume that the underlying alphabet is finite, and all of our matrices have a strictly positive element in every row and every column) and 
	\item we require (Cf. \eqref{x58})
\begin{equation}\label{x43}
\exists k,\; \exists \theta_1, \dots, \theta_k\text{ with } \, \nu(\theta_1, \dots, \theta_k)>0,\,\forall i, j :\; (\mathbf{M}_{\theta_1 }\cdots \mathbf{M}_{\theta_k})(i,j)>0.
\end{equation}
\end{itemize}

\subsection{Application to random self-similar sets}\label{w78}

Our motivation to consider multitype branching processes in random environments (MBPREs) comes from the theory of random self-similar sets, of which we show an example below. In this application the positivity conditions described in the previous section (e.g. \eqref{x58}) typically do not hold.

One example of a random self-similar set to which our theorem applies is the $45$-degree projection of the random Sierpi\'nski carpet. 
The positivity of Lebesgue measure of this projection depends on the extinction probability of a corresponding multitype branching processes in finitely generated random environment (where at each step we choose the driving distribution from the same finite family of distributions uniformly and independently). The explanations corresponding to this connection between the Lebesgue measure of the projection of the random Sierpi\'nski carpet and an MBPRE given in this section are heuristic, their purpose is to enlighten the underlying ideas.

The random Sierpi\'nski carpet was introduced in \cite[Example 1.1]{dekking1990structure}, projections of similar random constructions has been studied in for example \cite{falconer1989}, \cite{zbMATH00064453} and in particular positivity of Lebesgue measure of projections (only to the coordinate axes) of such sets has been considered in \cite{zbMATH04022321}. The size (existence of interior points and positivity of Lebesgue measure) of algebraic differences of 1-dimensional random Cantor sets has been investigated in \cite{zbMATH05255663}, \cite{zbMATH05643268}.

\subsubsection*{Deterministic Sierpi\'nski carpet}

The deterministic Sierpi\'nski carpet is the attractor of the IFS 
$$
	\mathcal{S}=\{S_i(\underline{x})=\frac{1}{3}\underline{x}+t_i\}_{i=0}^{7},
$$
in $\R^2$, where $\{t_i\}_{i=0}^{7}$ is the enumeration of the set
$
	\left\{0,1/3,2/3\right\}^2\setminus\left\{ 1/3,1/3\right\}.
$ For the first level approximation, see Figure \ref{x37}.

\begin{figure}
	\centering
	\begin{subfigure}[b]{0.30\textwidth}
		\centering
		\includegraphics[width=\linewidth]{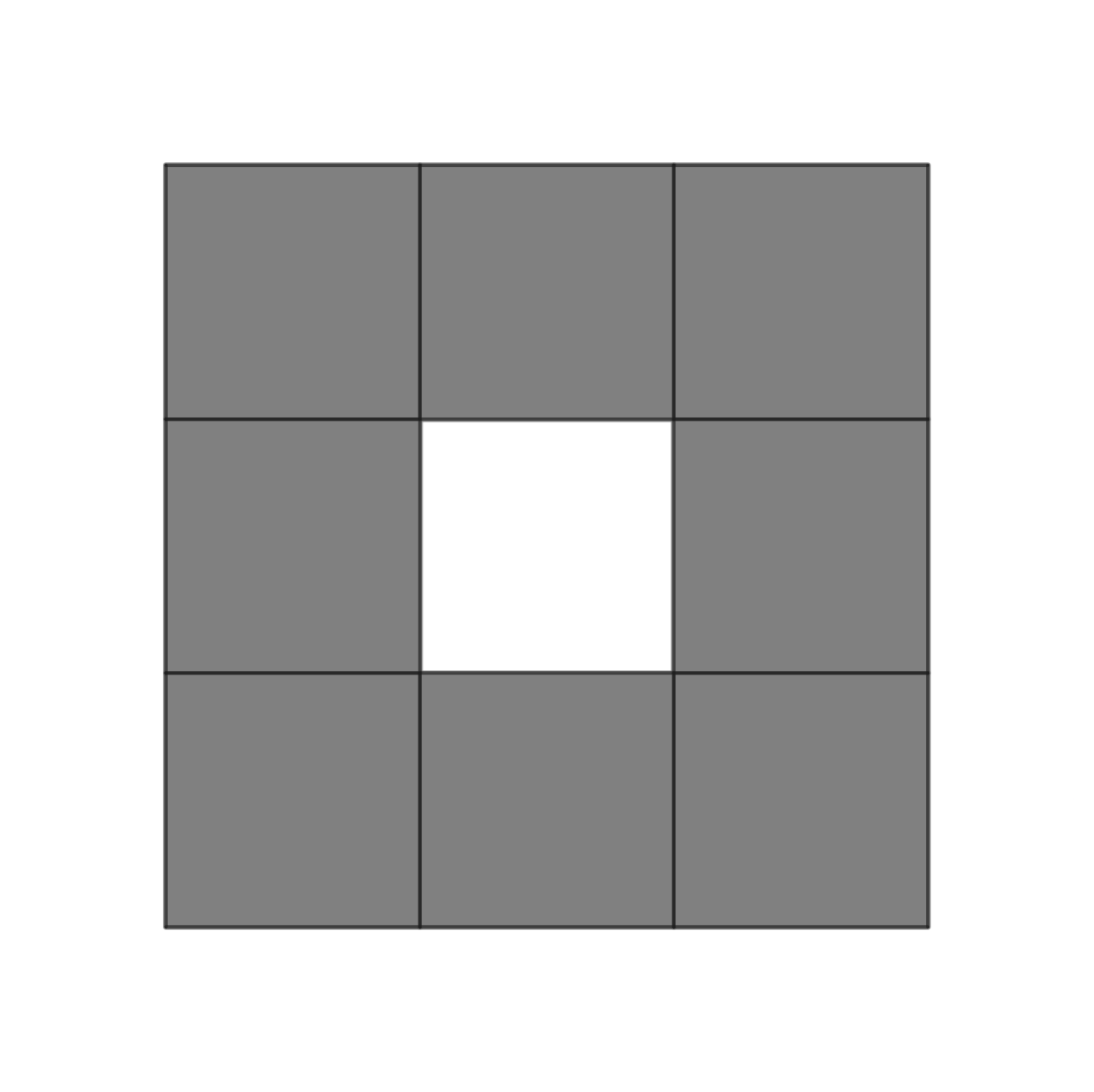}
		\caption{Sierpi\'nski carpet.}\label{x37}
	\end{subfigure}
	\begin{subfigure}[b]{0.30\textwidth}
		\includegraphics[width=\linewidth]{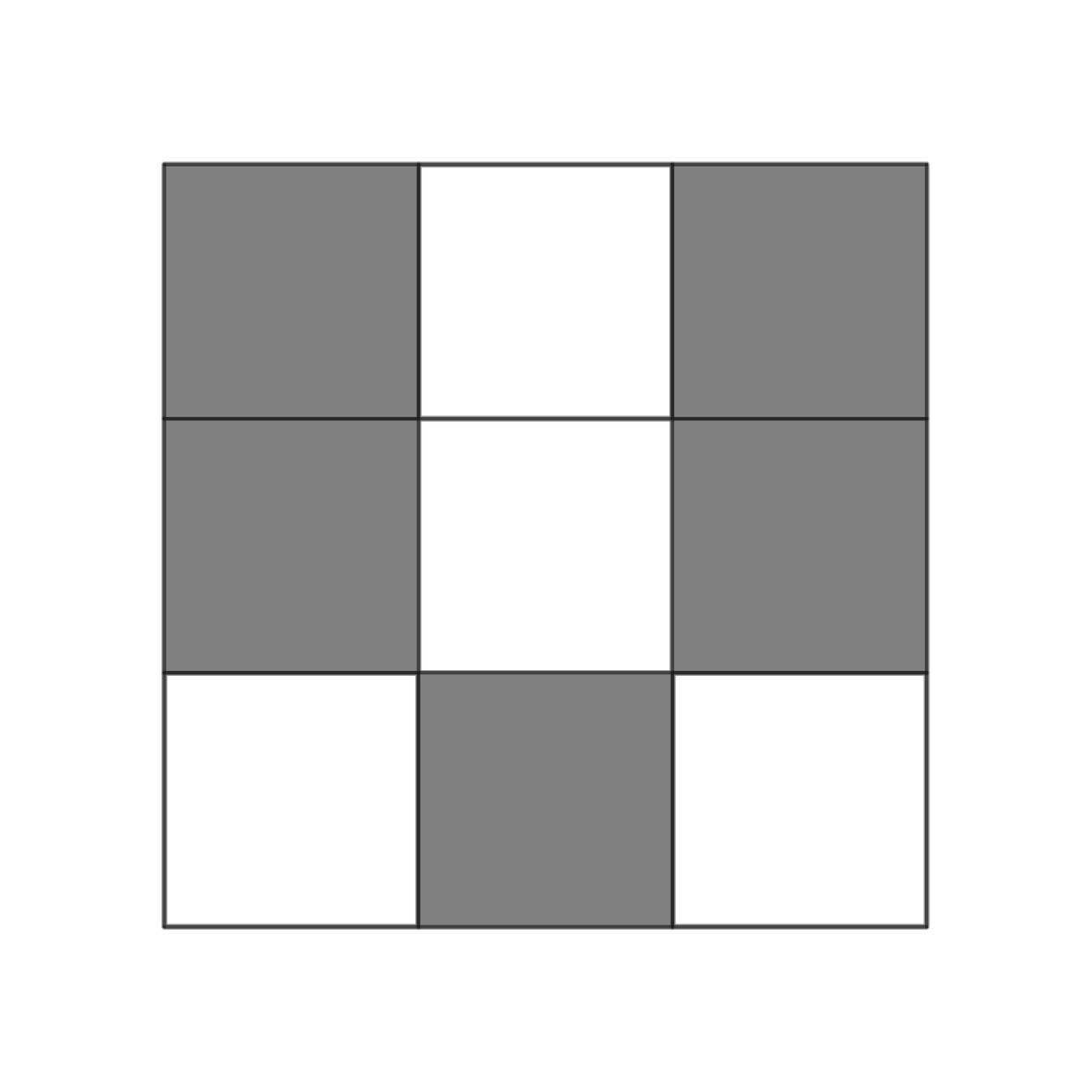}
		\caption{Realization (level 1).}\label{x36}
	\end{subfigure}
	\begin{subfigure}[b]{0.30\textwidth}
		\centering
		\includegraphics[width=\linewidth]{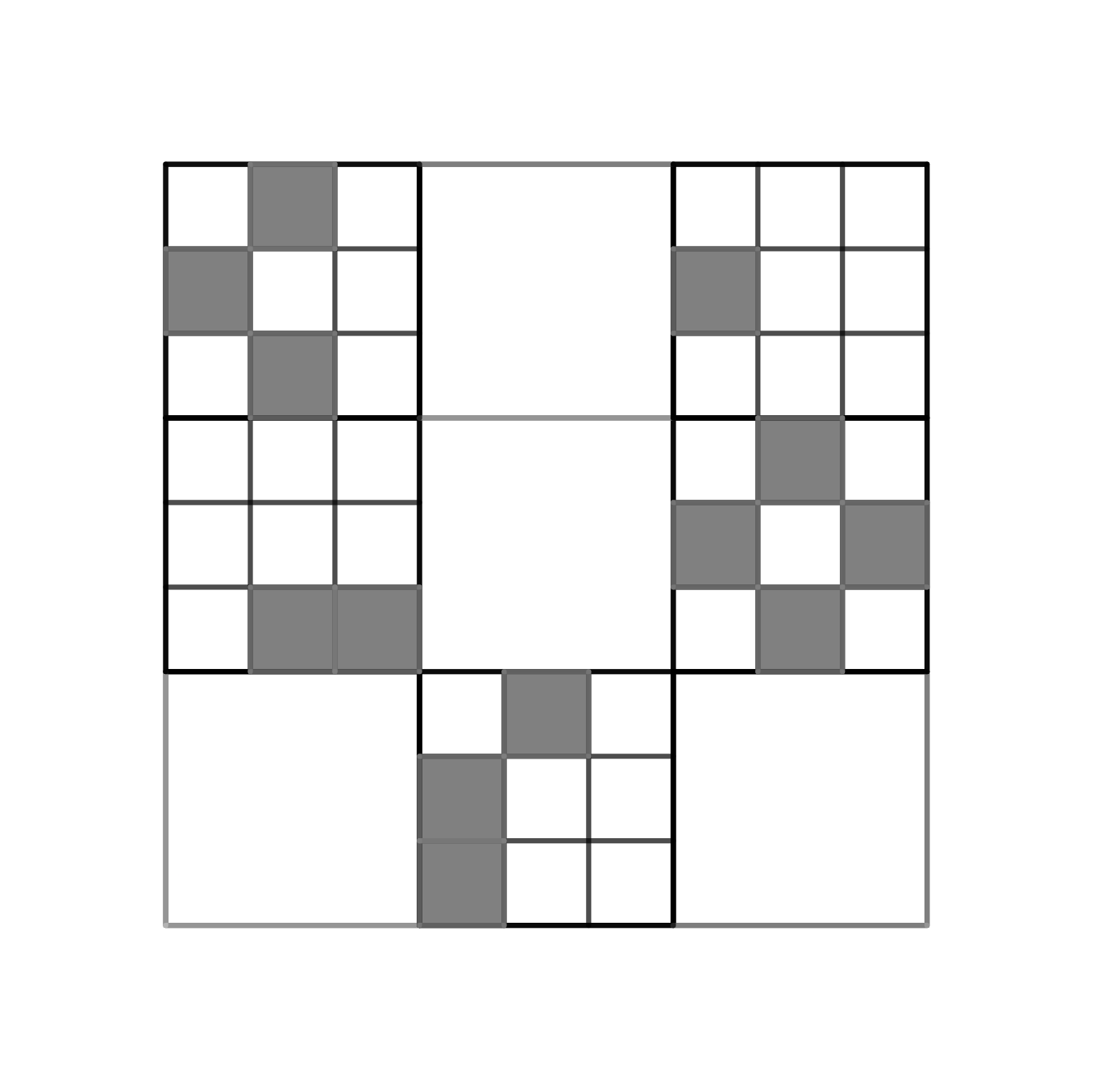}
		\caption{Realization (level 2).}\label{x35}
	\end{subfigure}
	\caption{}
\end{figure}

\subsubsection*{Random Sierpi\'nski carpet}

Now we give an informal description of how to obtain the random Sierpi\'nski carpet. First, we fix a probability parameter $p\in[0,1]$. 
In a given square we repeat the following two steps:
\begin{itemize}
	\item  We subdivide the square into 9 congruent subsquares and discard the middle one to archive the first level approximation of the deterministic Sierpi\'nski carpet. (For the first level approximation of the deterministic Sierpi\'nski carpet see Figure \ref{x37}.)
	\item Each of the remaining 8 congruent squares are retained with probability $p$ and discarded with probability $1-p$ independently of each other. (For a possible realization of this step see Figure \ref{x36}).
\end{itemize}
We start with the $[0, 1]^2$ unit square, and repeat the above described process in the retained cubes independently of each other ad infinitum, or until there are no cubes left. For a realization of a first and second level approximation see Figure \ref{x36} and \ref{x37}.
Formally the construction is defined for example in \cite[Definition 1.1]{OurPaper}.

\begin{figure}
	\centering
	\begin{subfigure}[b]{0.31\textwidth}
		\centering
		\includegraphics[width=\linewidth]{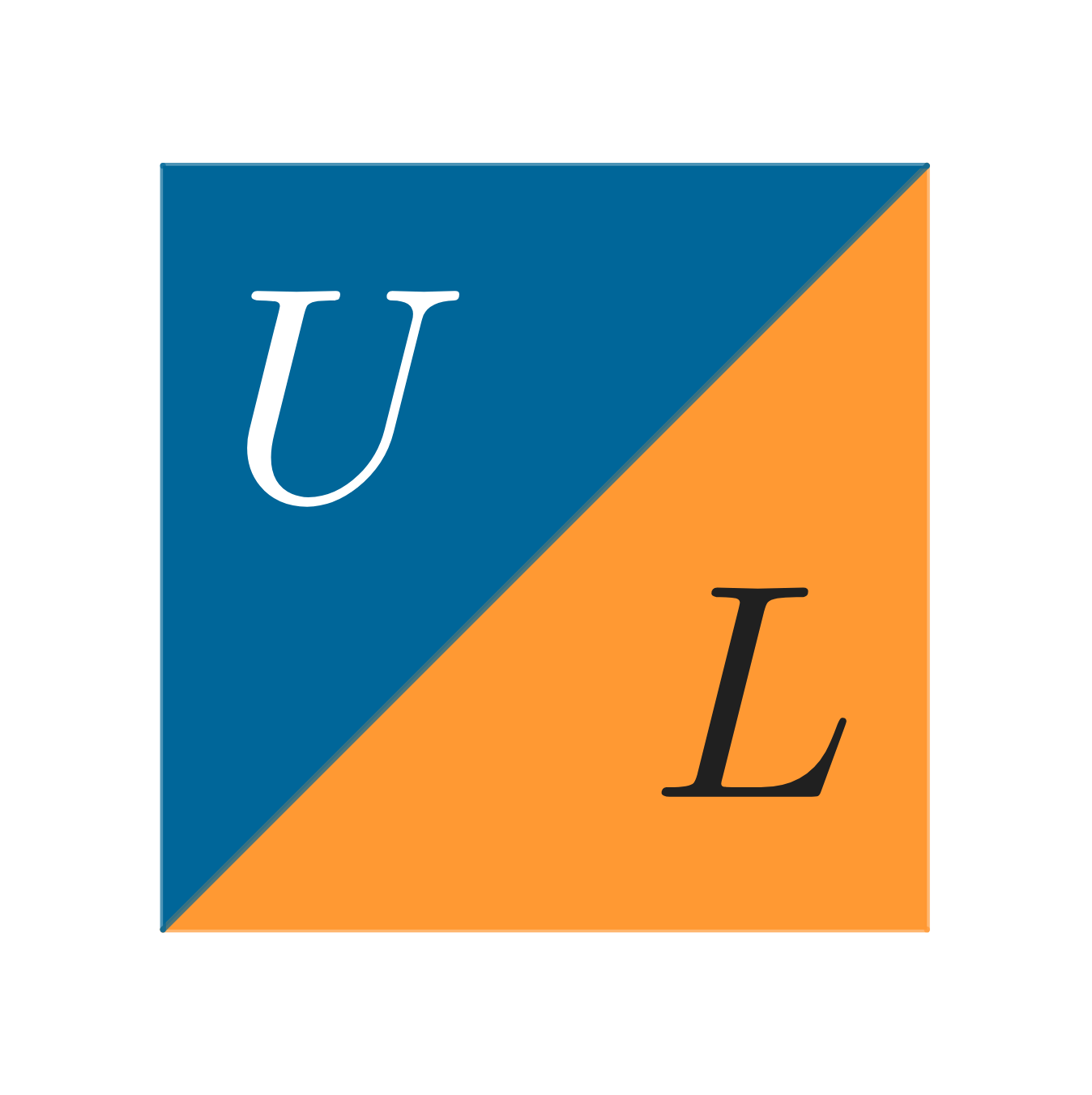}
		\caption{Upper and lower triangle.}\label{x27}
	\end{subfigure}
	\begin{subfigure}[b]{0.31\textwidth}
		\includegraphics[width=\linewidth]{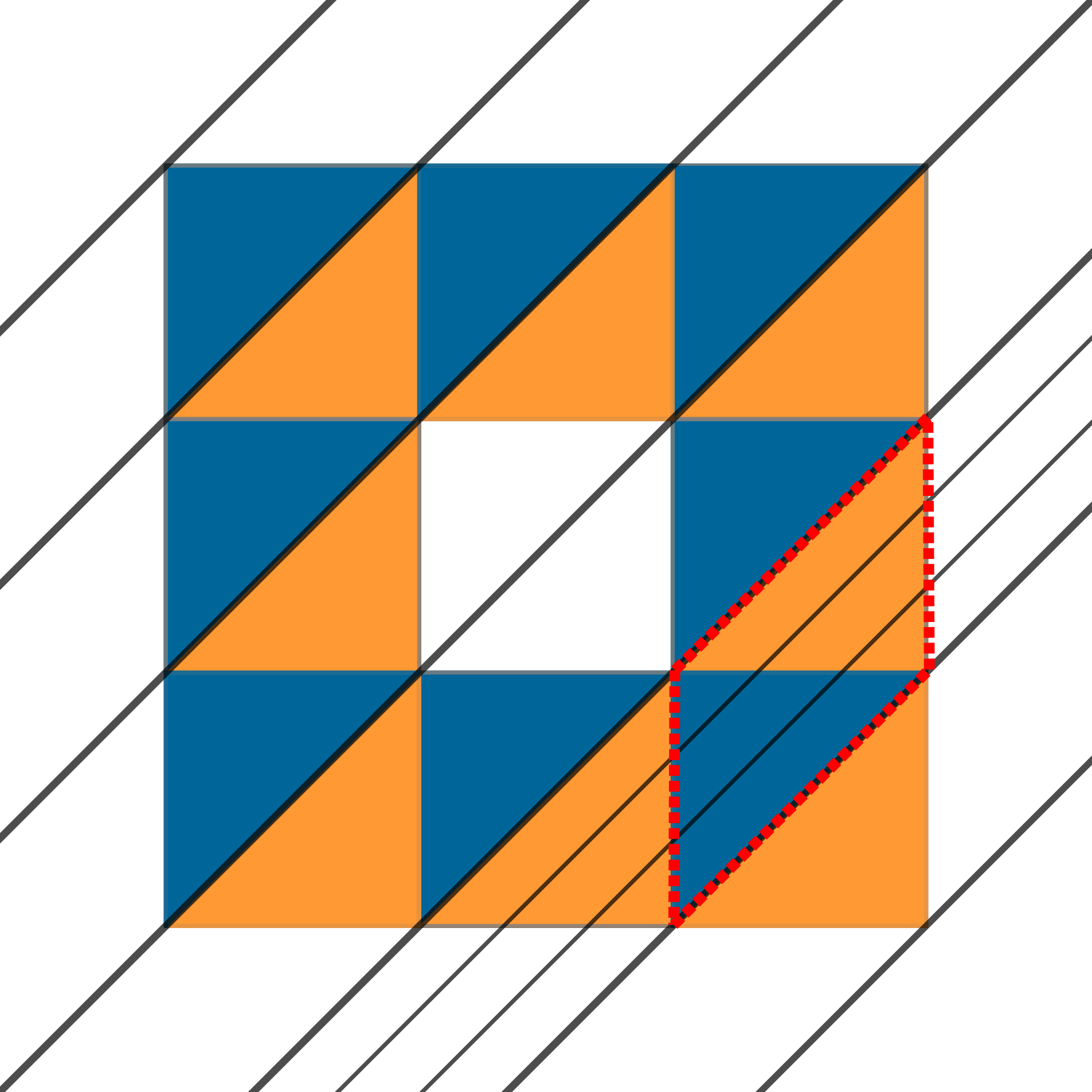}
		\caption{Types and columns.}\label{x26}
	\end{subfigure}
	\begin{subfigure}[b]{0.31\textwidth}
		\centering
		\includegraphics[width=\linewidth]{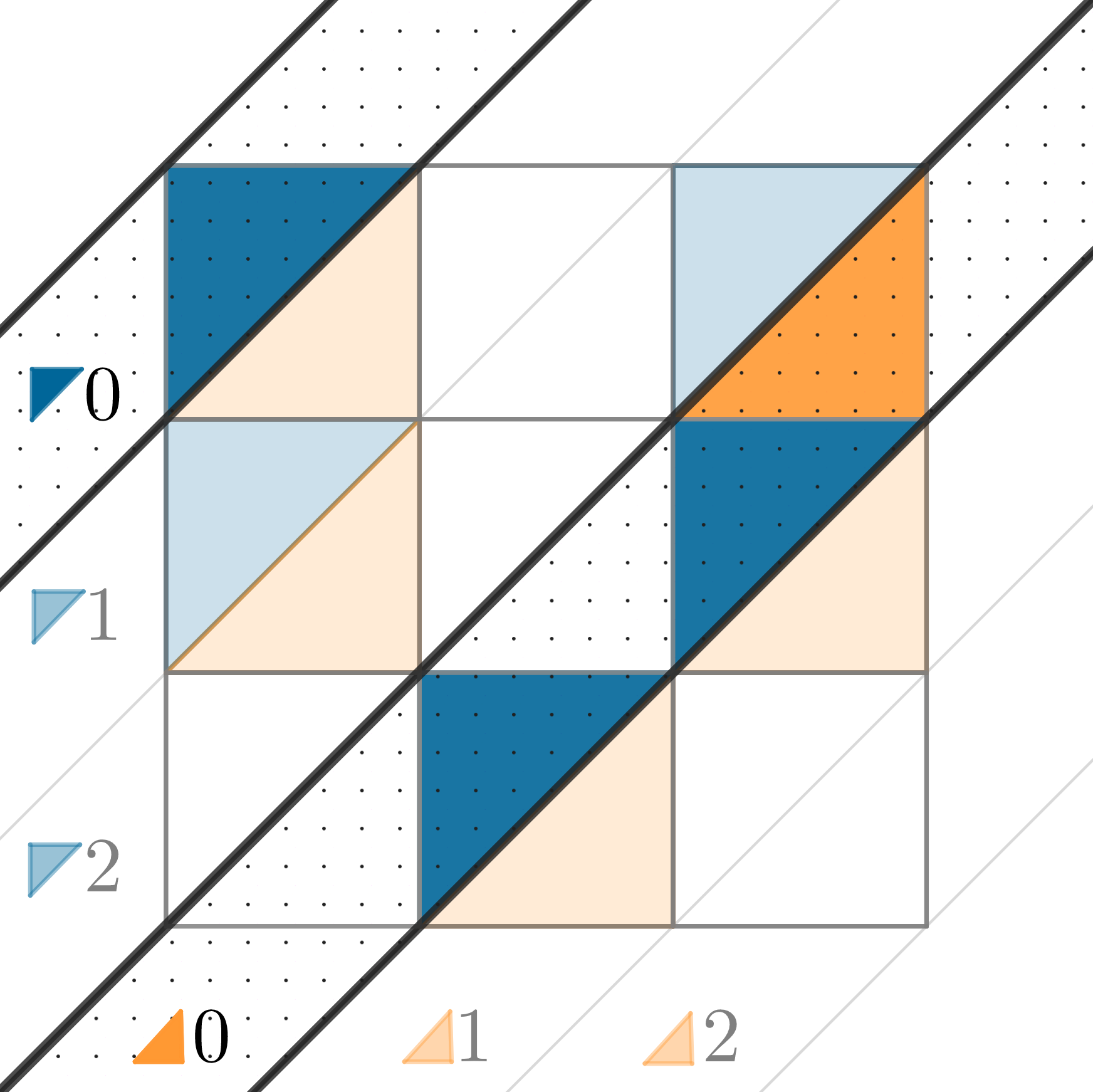}
		\caption{Zeroth column.}\label{x25}
	\end{subfigure}
	\caption{}
\end{figure}

\subsubsection*{Projection and multitype branching processes in random environments}
To investigate the $45$-degree projection of the random Sierpi\'nski carpet it is more practical, instead of analysing the process in squares, to subdivide the square into two triangles as shown in Figure \ref{x27} for the zeroth and \ref{x26} for the first level triangles (in the deterministic Sierpi\'nski case, before the randomization) and analyse the process in terms of triangles. This strategy can be familiar from for example \cite{zbMATH05255663} and \cite{zbMATH05643268}. We now introduce the corresponding MBPRE, starting with types and the environments.
The two level-0 triangles, $U$ (as upper) and $L$ (as lower) will be the prototypes of the two abstract type $\mathfrak{U}$ and $\mathfrak{L}$, which are the types of all the upper and lower triangles (smaller as well) respectively for deeper levels as well. The 6 level-1 columns are the stripes which are bounded by 2 consecutive of the 7 $45$-degree lines going through the vertices (the lines are the thicker lines of Figure \ref{x26}).

The columns are the building blocks of the environment. 
In the first level both zeroth level triangles ($U$ and $L$) subdivides into 3 columns, containing $1/3$-rd smaller upper and lower triangles (Figure \ref{x25}). For each level-1 triangle we consider the corresponding level-2 sub-columns in an analogous way---the lines defining the level-2 columns (for the level-1 upper and lower triangles in the framed area) are drawn with more slender lines. In levels deeper than the zeroth the upper and lower triangles in a given column are always positioned in a way, that their sub-columns pairwise coincide, hence we identify the 3 sub-column of the upper and the 3 sub-column of the lower triangle already in the zeroth level, resulting in 3 rather than 6 columns altogether. For example, in the randomized case in Figure \ref{x25} the zeroth column in a particular realization is highlighted. These columns serve as the alphabet from which we choose the infinite words, the environments.

The particular realization is the first level of a random Sierpi\'nski carpet from Figure \ref{x36} which we further inspect now using Figure \ref{x25}. We focus on the zeroth column. The level-1 upper triangle contains one level-2 upper triangle with probability $p$ (this is the case in this realization) and nothing with probability $1-p$. The zeroth column of the lower triangle however contains two upper and one lower triangle in this realization. Here the number of upper as well as the number of lower triangles follows a binomial distribution with parameter $2$ and $p$. 
It follows that in the zeroth-column the \textit{expected number} of small upper triangles in an upper triangle is $p$, and in a lower triangle is $2p$. The expected number of small lower triangles and of small upper triangle in a lower triangle is both $2p$. We can summarize the last information in an \textit{expectation matrix}, indexed with zero to denote that we are in the zeroth column, 
$$
\mathbf{M}_0=
p\begin{bmatrix}  
1 & 0 \\  
2 & 2  
\end{bmatrix}
$$
in which the first row corresponds to the bigger upper, while the second to the bigger lower triangle, and the first column is to the number of smaller upper and the second is to the number of lower triangles that born from the bigger types. For example the first element of the second row gives the expected number of small upper triangles in a big lower triangle.

The expectation matrices corresponding to the other two columns are
$$
\mathbf{M}_1=p\begin{bmatrix}  
2 & 1 \\  
1 & 2  
\end{bmatrix}, \quad 
\mathbf{M}_2=p\begin{bmatrix}  
2 & 2 \\  
0 & 1 
\end{bmatrix}.
$$

When the survival probability of the MBPRE, informally described above (with two types, the type of upper and the type of lower triangles, and environments described by infinite sequences of columns), is positive then the Lebesgue measure of the $45$-degree projection of the Sierpi\'nski carpet is also positive (almost surely conditioned on the set being non-empty).
The assumptions of Theorem \ref{w77} are satisfied: the environmental process (uniform and i.i.d.\ over the alphabet $[2]$) is stationary and ergodic, the expectation matrices are allowable, $\mathbf{M}_1$ is strictly positive, and assumption (d) is trivial. It is also clear, that for all $n\in\N$, we have $\mathbf{M}_0^n(1,2)=0$. Therefore, condition \eqref{x58} does not hold and the results preceding this paper are not applicable.

The Lyapunov exponent ($\lambda$) in this case is estimated by Pollicott and Vytnova \cite{pollicottVytnova} to be $1.367< \lambda < 1.395$, hence for the critical value, $p_*$ (i.e.\ for $p>p_*$ we have and for $p<p_*$ we do not have positive Lebesgue measure), $p_*\in [0.247833,0.25487]$.

\subsection{Structure of the paper}
For the remainder of this section, we introduce the notation that we will use throughout this paper.
Then, in the second section, we define multitype branching process in varying and in random environments, and we state our main results precisely.
The remainder of the paper is devoted to the proof of the main theorem.

Our introduction of multitype branching processes in a random environment
in Sections \ref{w88}-\ref{w89}
follows closely that of
\cite[Chapter 10]{Kersting2017} with slight modifications in the notation.
\subsection{Notation}\label{w88}

For $k>0$, $[k]:=\{0, \dots, k-1\}$.
We denote the vectors and matrices by boldface letters; in particular
\begin{align*}
	\mathbf{0}:=(0, \dots, 0)\text{, }
	\mathbf{1}:=(1, \dots, 1)\text{ and }
	\mathbf{e}_i:=(\underbrace{0, \dots,0}_{i}, 1,\underbrace{ 0, \dots, 0}_{N-i-1}).
\end{align*}
For two $N$-dimensional vectors $\mathbf{u}=(u_0, \dots, u_{N-1})$ and $\mathbf{v}=(v_0, \dots, v_{N-1})$ and the $N\times N$ matrices $\mathbf{U}=(u_{i,j})_{i,j\in[N]}$ and $\mathbf{V}=(u_{i,j})_{i,j\in[N]}$ let
\begin{align}
	 & \mathbf{u}\cdot \mathbf{v}:= u_0v_0+ \dots+ u_{N-1}v_{N-1} ,\,
	\mathbf{u}^{\mathbf{v}}=\prod_{i=0}^{N-1}u_{i}^{v_i}       \text{ and } \\
	 &
	\begin{array}{lr}
		\mathbf{u} \leq \mathbf{v} \\
		\mathbf{U}\leq \mathbf{V}
	\end{array}\text{ if and only if }
	\begin{array}{lr}
		u_i\leq v_i \\
		u_{i,j}\leq v_{i,j}
	\end{array}\text{ for all $i,j\in [N]$}.\label{w76}
\end{align}
We further use the strict equality version of \eqref{w76}, when all $\leq$ is replaced with $<$.
Given the functions $\mathbf{f}=(f^{(0)}, \dots, f^{(N-1)}), \{\mathbf{f}_i=(f^{(0)}, \dots, f^{(N-1)})\}_{i\in \mathcal{I}}$ and $g$, $\mathbf{f},\mathbf{f}_i:\R^N\to\R^N$ and $g:\R^N\to\R$ for some $N\in \N$ we write
\begin{itemize}
	\item for $K\in\R, K\geq 0$ $g^K=g\cdot \dotsc \cdot g$ and
    \item for $0 \leq \mathbf{K}=(K_0,\dots, K_{N-1})\in\R^N$, $\mathbf{f}^K=(f^{(0)})^{K_0}\cdot \dotsc \cdot (f^{(N-1)})^{K_{N-1}}$;
    \item for $\underline{i}=(i_1, \dots, i_n)\in\mathcal{I}^{n}$,
$f_{\underline{i}}:=f_{i_1}\circ \cdots \circ f_{i_n}$.
\end{itemize}

Some further notation we use throughout paper, including the place of the first occurrence:

\begin{center}
	\setlength\extrarowheight{4pt}
	\begin{longtable}{@{}m{0.14\textwidth} | m{0.72\textwidth} m{0.05\textwidth}}
	symbol& explanation&link\\
	\hline

	$\mathscr{A}$&$N\times N$ non-negative, allowable matrices&  Sec. \ref{z49}\\

	$\lambda$&Lyapunov exponent corresponding to the expectation matrices and the ergodic measure $\nu$&  Def. \ref{x84}\\
																										
	$\lambda_*$&column-sum exponent corresponding to the expectation matrices and the ergodic measure $\nu$& Def. \ref{x83}\\
																						
	$\alpha$ &the uniform allowability constant&\eqref{z15}\\

	$\mathbf{M}_{\theta}$&expectation matrix in the environment $\theta$& \eqref{eq:z71}	\\
																								
	$\mathbf{A}_{\theta}, \rho$ & $\mathbf{A}_{\theta}$ is the $\rho$-decreased expectation matrix &\eqref{x21}	\\
																										
	$\mathfrak{W},\,\mathfrak{W}^{\theta, k}$&$\left\{(k,i,\theta)\in [N]^2\times\mathcal{I}: \mathbf{M}_{\theta}(k,i)>0 \right\}$ and $\left\{i\in [N]: (k,i,\theta)\in \mathfrak{W} \right\}$& \\

	$t_{\theta}^{(k)}(\mathbf{s})$& $1-\mathbf{r}_{k}(\mathbf{M}_{\theta})\cdot (\mathbf{1}-\mathbf{s})$&\eqref{x62}	\\

	$\mathbf{r}_k(\mathbf{B}), \mathbf{c}_k(\mathbf{B})$&the $k$-th row and column vector of a matrix $\mathbf{B}$, respectively & \\

	$g_{\theta}^{(k)}(\mathbf{s}), \, \mathbf{g}_{\theta}(\mathbf{s})$&$1-\mathbf{r}_{k}(\mathbf{A}_{\theta})\cdot (\mathbf{1}-\mathbf{s})$  and $(g_{\theta}^{(0)}(\mathbf{s}), \dots, g_{\theta}^{(N-1)}(\mathbf{s}))\!=\!\mathbf{1}-\mathbf{A}_{\theta}(\mathbf{1}-\mathbf{s})$ resp.&\eqref{x90}\\
																									  
	$B_{\delta}$&$\{\mathbf{s}\in [0,1]^N: \|\mathbf{1}-\mathbf{s}\|_{\infty}\leq \delta\}$ & \eqref{x99}\\
																									  
	$\psi(\mathbf{s})$&& \eqref{z34}\\
																								  
	$R(t),\, R^C(t)$&$\{ \mathbf{s} \in [0,1]^{N}:\|\mathbf{s}\|<t\} \text{ and } \{ \mathbf{s} \in [0,1]^{N}:\|\mathbf{s}\|\geq t\}$ resp.&\eqref{x87}\\

	$\varphi_v (t)$&$N-N v+v t$&\eqref{x88}\\
																									  
	$q^{(k)}(\overline{\pmb{\theta}}), \, \mathbf{q}(\overline{\pmb{\theta}})$&the probability that the process starting with one individual of type-$k$ becomes extinct, and the vector of these, $(q^{(k)}(\overline{\pmb{\theta}}))_{k\in[N]}$ resp.& \eqref{x59}\\
																          
	\end{longtable}                                                                               
	\end{center}

	\section{Introduction to multitype branching processes}
We now introduce some preliminary notation regarding the $N$-type branching process for $N\geq 2$.
Denote by $\mathcal{P}(\mathbb{N}_{0}^N)$ the probability distributions on $\mathbb{N}_{0}^N$. Furthermore, we identify the distributions in $\mathcal{P}(\mathbb{N}_{0}^N)$ with their probability generating functions (pgfs) $f$. That is let $\mu \in \mathcal{P}\left(\mathbb{N}_{0}^N\right)$ then we identify $\mu $ with
its pgf,
\begin{equation}
	f(\mathbf{s}):=\sum_{\mathbf{z}\in \mathbb{N}_{0}^N}f[\mathbf{z}]\mathbf{s}^\mathbf{z},
	\text{ for }
	\mathbf{s}\in[0,1]^N,
\end{equation}
where $f[\mathbf{z}]:=\mu (\left\{\mathbf{z}\right\} )$, $\mathbf{z}=(z_0, \dots, z_{N-1})\in \mathbb{N}_0^N$.

We consider
the $N$-dimensional vectors of
probability measures which are identified by
the vectors of their pgf
\begin{equation}\label{z13}
	\mathbf{f}=(f^{(0)}, \dots, f^{(N-1)})\in \mathcal{P}(\mathbb{N}_{0}^N)\times \cdots \times \mathcal{P}(\mathbb{N}_{0}^N)=\mathcal{P}^N(\mathbb{N}_{0}^N).
\end{equation}

\subsection{Multitype branching processes in varying environments}\label{f99}
On the underlying probability space $(\Omega ,\mathcal{F},\mathbf{P})$
we define the $N$-type branching process in varying environment for an $N \geq 2$.
 
\begin{definition}\label{z25}
	A sequence $\overline{v}=(\mathbf{f}_1, \mathbf{f}_2, \dots)$,  of $N$-dimensional probability measures $\mathbf{f}_i = (f^{(0)}_i, \dots, f^{(N-1)}_i)$ (see \eqref{z13}) on $\mathbb{N}_{0}^N$ is called a \texttt{varying environment}.
\end{definition}

\subsubsection{Alternative description of the process}\label{z95}
	Assume that we are given a varying environment $\overline{v}=\left(\mathbf{f}_n\right)_{n\geq 1}$ of $N$-dimensional probability measures.
	For each $i\in[N]$ and $n \geq 1$ there is an offspring vector random variable
	\begin{equation*}
	\mathbf{Y} _{n}^{(i)}=(Y _{n}^{(i) }(0), \dots ,Y _{n}^{(i)}(N-1))
 	\end{equation*}
	such that
 	\begin{equation*}
 	\mathbf{P}\left(\mathbf{Y}_{n}^{(i)}=\mathbf{y}\right)=f _{n}^{(i) }[\mathbf{y}] ,\, \text{ for every } \mathbf{y}\in\mathbb{N} _{0}^{N}.
	\end{equation*}
Now we define $\left\{ \mathbf{Z}_n\right\}_{n\geq 0}$ the \texttt{N-type branching process in the varying environment} $\overline{v}$.
We start at level $0$, where the number of different types of individuals is deterministic and is given by
$\mathbf{z}_0:=(z _{0}^{(0) },\dots ,z _{0}^{ (N-1)})$, that is
$\mathbf{Z}_0:=\mathbf{z}_0$. Given $\mathbf{Z}_0, \dots, \mathbf{Z}_{n-1}$ we define $\mathbf{Z}_n$ as follows.

We consider the sequence of vector random variables
		      $$
			  \left\{\mathbf{Y}_{j,n}^{(i)}
			      =(Y _{j,n}^{(i) }(0),\dots Y _{j,n}^{(i) }(N-1)) : i\in [N],\ j\in\{1,\dots, \mathbf{Z}_{n-1}^{(i)}\}\right\},
		      $$
		      
			  \begin{enumerate}[label={(\alph*)}]
				\item $\left\{\mathbf{Y}_{j,n}^{(i)}\right\}_{i,j}$ are independent of each other and $\mathbf{Z}_{n-1}$, and 
				\item $\mathbf{Y}_{j,n}^{(i)}\stackrel{d}{=}\mathbf{Y}_{n}^{ (i)}$.
			  \end{enumerate}

			  Informally the meaning of the $\ell $-th component,
			  $Y_{j,n}^{(i)}(\ell )$ of $\mathbf{Y}_{j,n}^{(i)}$ is the number of type-$\ell$ individuals of level $n$ given birth by the $j$-th level $n-1$ type-$i$ individuals.
			  
Then the vector of the numbers of various type level-$n$ individuals is
\begin{equation} \label{eq:z72}
	\mathbf{Z}_n=(Z_n^{(0)}, \dots, Z_{n}^{(N-1)}):= \sum_{i=0}^{N-1}\sum_{j=1}^{Z_{n-1}^{(i)}}\mathbf{Y}_{j,n}^{(i)},
\end{equation}
where $Z _{n}^{(i)}$ stands for the number of type $i$ individual in the $n$-th generation.
\begin{definition}\label{x93}
Formally, the stochastic process $\mathcal{Z}=\{\mathbf{Z}_n\}_{n\geq 0}$ is called
\texttt{$N$-type branching process with varying environment $\overline{v}$} if for any $\mathbf{z}\in\N_{0}^N$
\begin{equation*}
	\mathbf{P}_{\mathbf{z}_0,\overline{v}}
	(\mathbf{Z}_n=\mathbf{z}|\mathbf{Z}_1, \dots, \mathbf{Z}_{n-1})=(\mathbf{f}_n^{\mathbf{Z}_{n-1}})[\mathbf{z}],
\end{equation*}
where we write $\mathbf{P}_{\mathbf{z}_0,\overline{v}}$ instead of $\mathbf{P}$
to emphasize the initial population size $\mathbf{z}_0$ and the fixed (deterministic) environment $\overline{v}$.
\end{definition}
The two description is connected by Fact \ref{x79} in Appendix \ref{x73}.

\subsection{Multitype branching processes in a random environment}\label{w89}
In this section we describe the generalization of the above process, by instead of considering a fixed deterministic environment we consider random environments. Conditioning on the environment, the process behaves as a multitype branching process in a varying environment.
We endow $\mathcal{P}^N(\mathbb{N}_{0}^N)$ with the metric of total variation (see \cite[p. 260]{Kersting2017}) and with the respective Borel $\sigma $-algebra. Hence, we can speak about
random $N$-dimensional probability measures. These are random variables taking values in $\mathcal{P}^N(\mathbb{N}_{0}^N)$ of the form
$$\mathbf{F}=(F^{(0)}, \dots, F^{(N-1)}),$$
where the components are the pgfs
\begin{equation}\label{z12}
	F^{(i)}(\mathbf{s}):=\sum_{\mathbf{z}\in \mathbb{N}_{0}^N}F^{(i)}[\mathbf{z}]\mathbf{s}^\mathbf{z}, \, i\in[N].
\end{equation}
\begin{definition}\label{z73}
	A \texttt{random environment} is
	a sequence $\mathcal{V}=(\mathbf{F}_n)_{n\geq 1}$ of $N$-dimensional random probability measures taking values in $\mathcal{P}^N(\mathbb{N}_{0}^N)$.
\end{definition}
Now we introduce multitype branching processes $\mathcal{Z}$ in random environments as follows: first we consider a random environment $\mathcal{V}=(\mathbf{F}_n)_{n\geq 1}$. For a realization $\overline{v}=\left(\mathbf{f}_n = (f^{(0)}_n, \dots, f^{(N-1)}_n)\right)_{n\geq 1}$ of $\mathcal{V}$, $\mathcal{Z}$ evolves as an $N$-dimensional temporally non-homogeneous branching process, where
the offspring distribution of a type
$i$ individual on the $n-1$-th generation is governed by $f^{(i)}_n(\mathbf{s})$.

\begin{definition}[MBPRE]\label{z70}
	We say that the process $\mathcal{Z}=(\mathbf{Z}_{n}=(Z^{(0)}_n, \dots, Z^{(N-1)}_n))_{n\in \mathbb{N}}$
	taking values in $\mathbb{N}_{0}^N$
	is a \texttt{multitype ($N$-type) branching process in the random environment $\mathcal{V}=(\mathbf{F}_n)_{n\geq 1}$} (MBPRE)
	if for each realization $\overline{v}=(\mathbf{f}_n)_{n\geq1}$ of $\mathcal{V}$ and for each $\mathbf{z}_0, \mathbf{z_1}, \dots, \mathbf{z_k}\in \mathbb{N}_{0}^N$
	\begin{multline*}
		\mathbb{P}(\mathbf{Z}_1=\mathbf{z}_1, \dots, \mathbf{Z}_k=\mathbf{z}_k|\mathbf{Z}_0=\mathbf{z}_0, \mathcal{V}=\overline{v})
		= \mathbf{P}_{\mathbf{z}_0, \overline{v}}(\mathbf{Z}_1=\mathbf{z}_1, \dots, \mathbf{Z}_k=\mathbf{z}_k)\text{  a.s.},
	\end{multline*}
	where $\mathbf{P}_{\mathbf{z}, v}$ denotes the probability measure corresponding to the $N$-type branching process in varying environment $\overline{v}$ with initial distribution $\mathbf{Z}_0=\mathbf{z}$. We write $\mathbb{P}\left(\cdot\right)$ and $\mathbb{E}\left(\cdot\right)$ for the probabilities and expectations in random environments.
 \end{definition}
From this it follows, that for each realization $\overline{v}=(\mathbf{f}_n)_{n\geq1}$ and $\mathbf{z}, \mathbf{z}_0\in \mathbb{N}_{0}^N$
\begin{equation}\label{x94}
	\mathbb{P}(\mathbf{Z}_n=\mathbf{z}|\mathbf{Z}_0=\mathbf{z}_0, \mathbf{Z}_1=\mathbf{z}_1, \dots,\mathbf{Z}_{n-1}=\mathbf{z}_{n-1}, \mathcal{V}=\overline{v})=(\mathbf{f}_n^{\mathbf{Z}_{n-1}})[\mathbf{z}] \text{ a.s.}
\end{equation}
from which we can conclude, that
\begin{equation}\label{x95}
	\mathbb{E}(\mathbf{s}^{\mathbf{Z}_n}|\mathbf{Z}_0=\mathbf{z}_0, \mathcal{V}=\overline{v})=\mathbf{f}_1(\mathbf{f}_2(\dots(\mathbf{f}_n(\mathbf{s})))^{\mathbf{Z}_0}.
\end{equation}

\subsection{Our Principal Assumptions}
From now on we restrict ourselves to the case when the environment is coming from the infinite product of a \textit{countable} set of distributions from $\mathcal{P}^N(\mathbb{N}_{0}^N)$.

Namely, fix a countable set $\left\{ \mathbf{f}_i \right\}_{i \in \mathcal{I}}$, $\mathbf{f}_i \in \mathcal{P}^N(\mathbb{N}_{0}^N)$ indexed by the set $\mathcal{I}$.
This is the set of possible values of $\mathbf{F}_n$. In this way, the random environment $\mathcal{V}$ is a random variable which takes values from $\left\{ \mathbf{f}_i \right\}_{i \in \mathcal{I}}^{\mathbb{N}}$.

It is more convenient to identify the environments with their ``code'' from $\mathcal{I}^{\N}$, and refer to the code instead.
Namely, we define the map $$\Phi :\left\{ \mathbf{f}_i \right\}_{i \in \mathcal{I}}\to \mathcal{I}^{\N}=\Sigma, \, \Phi ( \mathbf{f}_{\theta _1},\mathbf{f}_{\theta _2},\dots ):=(\theta _1,\theta _2,\dots ).$$ 
\begin{definition}\label{z63}
The probability space $(\Sigma, \mathcal{A}, \nu)$ with the shift map $\sigma$ is defined as
\begin{enumerate}[label=\emph{(\alph*)}]
    \item $\Sigma:=\mathcal{I}^{\N}$,
    \item $\mathcal{A}$ is the usual $\sigma$-algebra on $\Sigma$,
    \item $\nu:=\Phi_*\mathfrak{m}$, where $\mathfrak{m}$ is the distribution of the environmental variable, $\mathcal{V}$. That is $\nu (H)=\mathfrak{m}(\Phi ^{-1}H)$, for any Borel set $H\subset \Sigma $;
    \item and for $\overline{\pmb{\theta}}=(\theta_1, \theta_2,\dots)\in\Sigma$, $\sigma(\overline{\pmb{\theta}}):=(\theta_2, \theta_3,\dots)$.
\end{enumerate}
\end{definition}
We will refer to an \textit{environment} as $\overline{\pmb{\theta}}=(\theta _1,\theta _2,\dots )\in\Sigma$ instead of $( \mathbf{f}_{\theta _1},\mathbf{f}_{\theta _2},\dots )$, and we write $\mathbf{Z}_n(\overline{\pmb{\theta}})$ for $\mathbf{Z}_n$ in the environment $\overline{\pmb{\theta}}$.

In our most important application we usually consider the following special case.
\begin{example}\label{z65}
	When $\Sigma=[L]^{\mathbb{N}}$ for some $2\leq L\in \mathbb{N}$ and the environmental sequence $\mathcal{V}$ is i.i.d. then we have a probability vector $\mathbf{p}=(p_1,\dots ,p_{L-1})$ such that $\mathbb{P}(\mathbf{F}_n=\mathbf{f}_k)=p_k$ for all $n$ and $k\in[L]$. In this case the infinite product measure $\nu=(p_0,\dots ,p_{L-1})^{\mathbb{N}} $ on $\Sigma$ corresponds to the distribution of $\mathcal{V}$ via the identification $\Phi$.
\end{example}
From now on we always assume that:
\noindent
\begin{PA}\label{z99}
	The system $(\Sigma, \mathcal{A},\sigma, \nu)$ defined in Definition \ref{z63} is ergodic.
	\label{PA2}
\end{PA}
\subsection{Expectation matrices and survival probabilities}
\subsubsection{Expectation matrices}
We define the $N \times N$ \textit{expectation matrix} corresponding to a fixed $\theta\in \mathcal{I}$ as 
\begin{equation}\label{eq:z71}
	\mathbf{M}_{\theta}(i,k)=\frac{\partial f_{\theta}^{(i)}}{\partial s_k}(\mathbf{1}).
\end{equation}

In case the environment $\overline{\pmb{\theta}}=(\theta_1, \dots, \theta_n, \dots)$ is fixed  using the notations of Section \ref{z95},
\begin{equation*}
	\mathbf{M}_{\theta_n}(i,k)=\mathbb{E}(Y^{(i)}_{n}(k)).
\end{equation*}
From this, using induction it follows that for any $\overline{\pmb{\theta}}=(\theta_1,\theta_2, \dots)\in \Sigma$ and $n\in\N$
\begin{equation*}
	\mathbb{E}\left[
		\mathbf{Z}_n|\mathcal{V}=\overline{\pmb{\theta}}, \mathbf{Z}_0=\mathbf{z}_0
		\right]
	=\mathbf{z}_{0}^{T}\mathbf{M}_{\theta _1}\cdots\mathbf{M}_{\theta _n}.
\end{equation*}

\subsubsection{Survival probabilities}
Fix $\overline{\pmb{\theta}}=(\theta_1, \theta_2, \dots)\in \Sigma$.
For every $\ell \in \mathbb{N}$
we consider the pgf vector
\begin{equation*}
	\mathbf{f}_{\theta _\ell }(\mathbf{s})=
	(f_{\theta _\ell }^{(0)}(\mathbf{s}),\dots ,
	f_{\theta _\ell }^{(N-1)}(\mathbf{s})),\quad
	f_{\theta _\ell }^{(i)}(\mathbf{s})
	=
	\sum_{\mathbf{ j}\in \mathbb{N}_0^N}
	f_{\theta _\ell }^{(i)}
		[\mathbf{j}]\cdot
	\mathbf{s}^{\mathbf{j}}.
\end{equation*}
Recall from the introduction that $f_{\theta _\ell }^{(i)}[\mathbf{j}]$ ($\mathbf{j}=(j_0, \dots, j_{N-1})$) is the probability that a level $\ell -1$ individual of type $i$ gives
birth to
$j_k$ individuals of type $k$ for every $k\in[N]$.

Applying \eqref{x95} to $\mathbf{z}_0=\mathbf{e}_i$ gives that
\begin{equation*}
\mathbb{E}\left[s^{\mathbf{Z}_n}
		|\mathcal{V}=\overline{\pmb{\theta}},
		\mathbf{Z}_0=\mathbf{e}_i
		\right]=f _{\overline{\pmb{\theta}}|_n}^{(i) }(\mathbf{s}).
\end{equation*}
This implies that
\begin{equation}
	\label{w85}
	\mathbb{P}\left(\mathbf{Z}_n(\overline{\pmb{\theta}})=\mathbf{0}|\mathbf{Z}_0=\mathbf{e}_i\right)=
	f _{\overline{\pmb{\theta}}|_n}^{(i) }(\mathbf{0})
	= f _{\theta _1}^{(i) }(\mathbf{f}_{\theta _2}
	\circ\cdots \circ \mathbf{f}_{\theta_n }(\mathbf{0})).
\end{equation}

Let $q^{(k)}(\overline{\pmb{\theta}})$ denote the probability that the process starting with one individual of type-$k$ becomes extinct, and
\begin{equation}\label{x59}
	\mathbf{q}(\overline{\pmb{\theta}})=(q^{(0)}(\overline{\pmb{\theta}}), \dots, q^{(N-1)}(\overline{\pmb{\theta}})).
\end{equation}
Further we denote the level-$n$ extinction probability by
\begin{equation}
	q_{n}^{(k)}(\overline{\pmb{\theta}})=\mathbb{P}(\mathbf{Z}_{n}(\overline{\pmb{\theta}})=\mathbf{0}|\mathbf{Z}_0=\mathbf{e}_k),\text{ and }\mathbf{q}_{n}(\overline{\pmb{\theta}})=(q_{n}^{(0)}(\overline{\pmb{\theta}}), \dots, q_{n}^{(N-1)}(\overline{\pmb{\theta}})).
\end{equation}

Using \eqref{w85} we obtain
\begin{equation}\label{x81}
	\mathbf{q}_{n}(\overline{\pmb{\theta}})=\mathbf{f}_{\overline{\pmb{\theta}}|_n}(\mathbf{0}).
\end{equation}
Hence,
\begin{equation}\label{x80}
	\mathbf{q}(\overline{\pmb{\theta}})=\lim_{n\to \infty}\mathbf{q}_{n}(\overline{\pmb{\theta}})=\lim_{n\to \infty}\mathbf{f}_{\overline{\pmb{\theta}}|_n}(\mathbf{0}).
\end{equation}
\section{Lyapunov and column-sum exponent}\label{z49}
For this and the following subsections we fix
$N\geq 2$.
Let $\mathscr{A}$ be the set of $N\times N$
allowable matrices (there exist some strictly positive elements in every row and every column) with only non-negative element.
For a $\mathbf{B}\in \mathscr{A}$ we introduce the minimum and maximum column sums:
\begin{equation}
	(\mathbf{B})_{*}=\min_{j\in[N]}\sum_{i\in[N]}\mathbf{B}_{i,j},\quad
	\|\mathbf{B} \|_1:=
	\max_{j\in[N]}\sum_{i\in[N]}\mathbf{B}_{i,j}.
\end{equation}
Finally, we define the norm we will mainly use throughout the paper
\begin{equation}
	\label{z53}
	\|\mathbf{B}\|:=\sum\limits_{i\in[N]}\sum\limits_{j\in[N]}\mathbf{B}_{i, j}.
\end{equation}
For $\{\mathbf{B}_{i}\}_{i\in\mathcal{I}}\subset \mathscr{A}$ and 
$\pmb{\theta}=(\theta_1, \dots, \theta_n)\in\mathcal{I}^n$ we denote
\begin{equation*}
	\mathbf{B}_{\pmb{\theta}}:=\mathbf{B}_{\theta_1}\cdots \mathbf{B}_{\theta_n}.
\end{equation*}

\begin{definition}[Good set of matrices]\label{z19}
	Let $\mathcal{B}=\{\mathbf{B}_i\}_{i\in\mathcal{I}}\subset \mathscr{A}$
	and let $\nu$ be an ergodic invariant measure on $(\Sigma,\mathcal{A} ,\sigma)$, where
	$\Sigma :=\mathcal{I}^{\mathbb{N}}$.
	We say that \textit{$\mathcal{B}$ is good} (with respect to $\nu $) if
	\begin{enumerate}
		\item $m_1:=\int |\log \|\mathbf{B}_{\theta _0} \|_1|d\nu (\theta )+
			      \int |\log (\mathbf{B}_{\theta _0})_{*}|d\nu (\theta )
			      <\infty$; \label{z50}
		\item There exists a $\pmb{\theta}=(\theta_1, \dots, \theta_n)\in \mathcal{I}^n$ such that $\nu(\{\overline{\pmb{\theta}}\in \Sigma: \overline{\pmb{\theta}}|_n=\pmb{\theta}\})>0$ and every element of $\mathbf{B}_{\pmb{\theta}}$ is strictly positive.
	\end{enumerate}
\end{definition}
Note that it follows from the fact that $(\mathbf{B})_{*}>0$ if $\mathbf{B}\in\mathscr{A}$ that assumption \eqref{z50} of Definition \ref{z19} always holds when $|\mathcal{I}|<\infty$.

Now we define the Lyapunov exponent of a random matrix product.
\begin{definition}[Lyapunov exponent]\label{x84} We are given an
	ergodic measure $\nu $ on $(\Sigma ,\sigma )$ and a
	$$\mathcal{B}=\{\mathbf{B}_{i}\}_{i\in\mathcal{I}}\subset \mathscr{A},$$ which is good
	with respect to $\nu $.
	The \textit{Lyapunov}-\textit{exponent} corresponding to $\mathcal{B}$ and the ergodic measure $\nu$ is
	\begin{equation*}
		\lambda:=\lambda(\nu, \mathcal{B})= \lim_{n \to \infty} \frac{1}{n} \log\|\mathbf{B}_{\overline{\pmb{\theta}}|_n}\| \; \text{for $\nu$-almost every $\overline{\pmb{\theta}}\in \Sigma$}.
	\end{equation*}
\end{definition}
The existence of $\lambda$ as defined above follows from
\cite[Corollary 10.1.1]{walters2000introduction}, and the ergodicity of $\nu$.

Using the super-multiplicativity of $(\cdot)_*$ for non-negative allowable matrices (namely, if $\mathbf{B}_1, \dots, \mathbf{B}_n\in \mathscr{A}$ then $(\mathbf{B}_1 \cdots \mathbf{B}_n)_* \geq (\mathbf{B}_1)_*\cdots \allowbreak (\mathbf{B}_n)_*$) it follows that we can define the analogue of the Lyapunov exponent for the minimal column sum, which we call the \textit{column-sum exponent}.
\begin{definition}\label{x83}
	The \textit{column-sum exponent} corresponding to a good set of matrices, $\mathcal{B}=\{\mathbf{B}_{i}\}_{i\in \mathcal{I}}$ and an ergodic measure $\nu$ is
	\begin{equation}
		\lambda_{*}:=\lambda_{*}(\nu, \mathcal{B})= \lim_{n \to \infty} \frac{1}{n} \log(\mathbf{B}_{\overline{\pmb{\theta}}|_n})_{*}\text{ for $\nu$-almost every $\overline{\pmb{\theta}}\in \Sigma$}.
	\end{equation}
\end{definition}
Below we give conditions under which $\lambda =\lambda _*$.
\begin{lemma}\label{z60}
	Let $\nu$ be an ergodic measure on $(\Sigma, \sigma)$.
	If $\mathcal{B}=\{\mathbf{B}_i\}_{i\in\mathcal{I}}\subset \mathscr{A}$ is good then
	\begin{equation}
		\lambda(\nu, \mathcal{B})=\lambda_*(\nu, \mathcal{B}).
	\end{equation}
\end{lemma}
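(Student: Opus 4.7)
The inequality $\lambda_* \le \lambda$ is immediate since $(\mathbf{B})_* = \min_j \sum_i \mathbf{B}(i,j) \le \|\mathbf{B}\|$ holds entry-wise. The substance is the reverse bound $\lambda \le \lambda_*$, which I would prove by a sandwich argument that exploits the strictly positive product guaranteed by condition~(2) of Definition~\ref{z19}.

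Fix a word $\pmb{\theta}^*$ of length $k$ with $\nu(\{\overline{\pmb{\theta}}:\overline{\pmb{\theta}}|_k=\pmb{\theta}^*\}) > 0$ such that $\mathbf{C} := \mathbf{B}_{\pmb{\theta}^*}$ is strictly positive, and set $c := \min_{i,j} \mathbf{C}(i,j) > 0$. The key algebraic observation is that, using $\mathbf{C} \ge c \mathbf{J}$ entry-wise (with $\mathbf{J}$ the all-ones matrix), for arbitrary non-negative matrices $\mathbf{P}, \mathbf{Q}$ a short computation gives $(\mathbf{PCQ})_* \ge c \|\mathbf{P}\|\,(\mathbf{Q})_*$, while submultiplicativity of $\|\cdot\|$ on non-negative matrices gives $\|\mathbf{PCQ}\| \le \|\mathbf{P}\|\,\|\mathbf{C}\|\,\|\mathbf{Q}\|$. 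Dividing,
\[
\frac{(\mathbf{PCQ})_*}{\|\mathbf{PCQ}\|} \;\ge\; \frac{c}{\|\mathbf{C}\|} \cdot \frac{(\mathbf{Q})_*}{\|\mathbf{Q}\|}.
\]

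To apply this, I would invoke Birkhoff's ergodic theorem on the indicator of the cylinder of $\pmb{\theta}^*$ under $\sigma$: for $\nu$-a.e.\ $\overline{\pmb{\theta}}$ the set of positions $j$ at which $\pmb{\theta}^*$ occurs in $\overline{\pmb{\theta}}$ has asymptotic density $\nu(\{\overline{\pmb{\theta}}:\overline{\pmb{\theta}}|_k=\pmb{\theta}^*\}) > 0$. Consequently the largest such position $p_n \le n - k + 1$ must satisfy $n - p_n = o(n)$, since otherwise a linearly long final gap would contradict positive density. Decompose
\[
\mathbf{B}_{\overline{\pmb{\theta}}|_n} = \mathbf{P}_n \mathbf{C} \mathbf{Q}_n, \quad \mathbf{P}_n := \mathbf{B}_{\theta_1} \cdots \mathbf{B}_{\theta_{p_n - 1}}, \quad \mathbf{Q}_n := \mathbf{B}_{\theta_{p_n + k}} \cdots \mathbf{B}_{\theta_n},
\]
so that $\mathbf{Q}_n$ is a product of $o(n)$ matrices from $\mathcal{B}$. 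The previous display then reduces $\lambda \le \lambda_*$ to showing $\frac{1}{n} \log \frac{(\mathbf{Q}_n)_*}{\|\mathbf{Q}_n\|} \to 0$.

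For this tail estimate, supermultiplicativity of $(\cdot)_*$ and submultiplicativity of $\|\cdot\|_1$, together with $\|\mathbf{Q}_n\|\le N\|\mathbf{Q}_n\|_1$, give
\[
\frac{(\mathbf{Q}_n)_*}{\|\mathbf{Q}_n\|} \;\ge\; \frac{1}{N}\prod_{i = p_n + k}^n \frac{(\mathbf{B}_{\theta_i})_*}{\|\mathbf{B}_{\theta_i}\|_1}.
\]
Taking $\frac{1}{n}\log$, both $\frac{1}{n}\sum_{i = p_n + k}^n \log(\mathbf{B}_{\theta_i})_*$ and $\frac{1}{n}\sum_{i = p_n + k}^n \log \|\mathbf{B}_{\theta_i}\|_1$ are differences between Birkhoff sums up to $n$ and up to $p_n+k-1$; since $p_n/n \to 1$ and the integrands are $L^1(\nu)$ by assumption~\eqref{z50}, both tend to $0$. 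Combining yields $\lambda \le \lambda_*$. The main obstacle is precisely this last step: controlling the $o(n)$-long suffix $\mathbf{Q}_n$ so that it contributes only subexponentially to the log-ratio, and this is where both halves of the integrability condition~\eqref{z50} are essential; the rest is routine bookkeeping around the strict-positivity sandwich.
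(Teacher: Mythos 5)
Your proof is correct, but it follows a genuinely different route from the paper. The paper disposes of Lemma \ref{z60} by invoking Hennion's theorem (\cite[Theorem 2]{Hennion97}): after checking in Appendix \ref{x82} that goodness of $\mathcal{B}$ implies Hennion's integrability and Condition $\mathscr{C}$, that theorem yields $\frac{1}{n}\log\bigl(\mathbf{1}^T\mathbf{B}_{\overline{\pmb{\theta}}|_n}\mathbf{e}_i\bigr)\to\lambda$ uniformly in $i$, so in particular the minimal column sum has exponent $\lambda$. You instead give a self-contained elementary argument: the sandwich $(\mathbf{P}\mathbf{C}\mathbf{Q})_*\geq c\,\|\mathbf{P}\|\,(\mathbf{Q})_*$ through the strictly positive block guaranteed by condition (2) of Definition \ref{z19}, combined with Birkhoff's theorem applied to the cylinder indicator (to place an occurrence of $\pmb{\theta}^*$ at position $p_n$ with $n-p_n=o(n)$) and to the integrable coordinate functions $\log(\mathbf{B}_{\theta})_*$ and $\log\|\mathbf{B}_{\theta}\|_1$ from \eqref{z50} (to show the $o(n)$-long suffix contributes subexponentially). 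All the individual steps check out: the algebraic sandwich, submultiplicativity of $\|\cdot\|$ and $\|\cdot\|_1$, supermultiplicativity of $(\cdot)_*$, $\|\mathbf{Q}\|\leq N\|\mathbf{Q}\|_1$, and the difference-of-Birkhoff-sums limit are all valid; the only step you state informally is $n-p_n=o(n)$, which is made rigorous by comparing the occurrence-counting function at $n$ and at $p_n$. What each approach buys: your argument is elementary, uses only Birkhoff plus goodness, and as a by-product establishes the a.e.\ existence of the limit defining $\lambda_*$ without appealing to Kingman; the paper's reliance on Hennion buys more than Lemma \ref{z60} itself, namely the uniform control over \emph{all} column (and, by Remark \ref{z52}, row) sums, which the paper reuses later, e.g.\ for the stability input to Corollary \ref{x54}, so the citation is not removable from the paper even if your proof replaced this particular lemma.
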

\begin{proof}
	The assertion follows from \cite[Theorem 2]{Hennion97}. For more details see the see Appendix \ref{x82}.
\end{proof}

\section{The main theorem. Extinction probability for MBPRE}\label{z29}

Before we state our theorem we define a condition.
\begin{definition}\label{x33}
	We say that the MBPRE (see Definition \ref{z70}) with state space $\mathcal{I}$ is \textit{uniformly allowable} if there exist an $\alpha>0$ such that
	\begin{equation}\label{z15}
		\inf\left\{\sum_{\substack{w_i\neq 0\\\mathbf{w}\in \mathbb{N}_0^N}}f_{\theta}^{(k)}[\mathbf{w}];\; \theta\in\mathcal{I},\,\mathbf{M}_{\theta}(k,i)>0\right\}>\alpha.
	\end{equation} 
\end{definition}
\begin{remark}\label{x56}
	Even though we call this property uniform allowability, this property is stronger than an actual uniform allowability condition would be. Namely, for each $\theta, k, i$ such that $\mathbf{M}_{\theta}(k,i)>0$ it holds that
	\begin{equation}
		\mathbf{M}_{\theta}(k,i)=\frac{\partial f^{(k)}_{\theta}}{\partial s_i}(\mathbf{1})=
		\sum_{\mathbf{w}\in \mathbb{N} _{0}^{N }}
		f _{\theta }^{(k) }[\mathbf{w}]
		z_i
		\mathbf{1}^{\mathbf{w}}=\sum_{\substack{w_i\neq 0\\\mathbf{w}\in \mathbb{N}_0^N}}
		f _{\theta }^{(k) }[\mathbf{w}]
		z_i
		\mathbf{1}^{\mathbf{w}}>\alpha.
	\end{equation}
\end{remark}
\begin{remark}
	It is easy to see that if $\mathcal{I}$ is finite and the corresponding expectation matrices are good (in particular allowable), then the MBPRE is uniformly allowable.
\end{remark}

\begin{theorem}\label{z68}
	Let $\nu$ be an ergodic measure on $(\Sigma, \sigma)$.
	Consider the $N$-type MBPRE $\mathcal{Z}=\{\mathbf{Z}_n\}_{n\in \mathbb{N}}$ as it was defined in Definition \ref{z70}. We assume that
	\begin{enumerate}[label=\emph{(\alph*)}]
		\item $\mathcal{M}=\{\mathbf{M}_i\}_{i\in\mathcal{I}}$ (defined in \eqref{eq:z71}) is good (see Definition \ref{z19}) with respect to $\nu $.\label{z16}
		\item $\mathcal{Z}$ is uniformly allowable (\eqref{z15}). \label{z11}
		\item There exists an $M<\infty$ such that for every $\theta \in \mathcal{I}$ and $i,j\in [N]$
		      \begin{equation}
			      \frac{\partial^2 f_{\theta}^{(k)}}{\partial s_j \partial s_i}(\mathbf{1})<M.
		      \end{equation}\label{z14}
	\end{enumerate}
	Then we have that
	\begin{enumerate}
		\item if $\lambda(\nu, \mathcal{M})>0$, then
		      \begin{equation*}
			      \mathbf{q}(\overline{\pmb{\theta}})=(q^{(0)}(\overline{\pmb{\theta}}), \dots, q^{(N-1)}(\overline{\pmb{\theta}})) \ne \mathbf{1}\text{ for }\nu \text{-almost every }\overline{\pmb{\theta}}\in \Sigma.
		      \end{equation*}
		\item $\mathbf{q}(\overline{\pmb{\theta}})\ne \mathbf{1}$ for $\nu$-almost every $\overline{\pmb{\theta}}\in \Sigma$ implies that $\mathbf{q}(\overline{\pmb{\theta}})< \mathbf{1}$ for $\nu$-almost every $\overline{\pmb{\theta}}\in \Sigma$.
	\end{enumerate}
\end{theorem}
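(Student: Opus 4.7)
I would pass $n\to\infty$ in \eqref{x81} to get the fixed-point identity $\mathbf{q}(\overline{\pmb{\theta}})=\mathbf{f}_{\theta_1}(\mathbf{q}(\sigma\overline{\pmb{\theta}}))$. Since $f_\theta^{(k)}(\mathbf{s})=1$ on $[0,1]^N$ only when $s_j=1$ for every $j\in\mathfrak{W}^{\theta,k}=\{j:\mathbf{M}_\theta(k,j)>0\}$, this gives the propagation rule
\[
  q^{(k)}(\overline{\pmb{\theta}})=1\ \Longrightarrow\ q^{(j)}(\sigma\overline{\pmb{\theta}})=1\ \text{for every}\ j\in\mathfrak{W}^{\theta_1,k}.
\]
Column-allowability of each $\mathbf{M}_\theta$ then makes $\{\mathbf{q}=\mathbf{1}\}$ forward-invariant under $\sigma$, so by measure-preservation and ergodicity it has $\nu$-measure $0$ or $1$. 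Suppose for contradiction that some $B_k=\{q^{(k)}=1\}$ has positive measure. By Birkhoff applied to the cylinder $[\pmb{\tau}]$ of the good word $\pmb{\tau}$ from Definition~\ref{z19}(2), for $\nu$-a.e.\ $\overline{\pmb{\theta}}\in B_k$ there is a finite $n$ with $\sigma^n\overline{\pmb{\theta}}\in[\pmb{\tau}]$. Propagating the ``$q^{(\cdot)}=1$'' property for $n$ steps (a witness coordinate stays alive at each step by allowability), then through $\pmb{\tau}$ (where $\mathbf{M}_{\pmb{\tau}}>0$ forces every coordinate simultaneously), yields $\mathbf{q}(\sigma^{n+|\pmb{\tau}|}\overline{\pmb{\theta}})=\mathbf{1}$. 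Partitioning $B_k$ by the value of $n$ and using the measure-invariance of each shift gives $\nu(\{\mathbf{q}=\mathbf{1}\})>0$, hence $=1$ by the dichotomy, contradicting the hypothesis.

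\textbf{Plan for part (1).} I argue by contradiction: assume $\lambda>0$ and $\nu(\{\mathbf{q}\neq\mathbf{1}\})<1$. The propagation argument of part~(2) upgrades this to $\mathbf{q}=\mathbf{1}$ $\nu$-a.s., so $\mathbf{u}_n(\overline{\pmb{\theta}}):=\mathbf{1}-\mathbf{q}_n(\overline{\pmb{\theta}})\to\mathbf{0}$ coordinatewise for a.e.\ $\overline{\pmb{\theta}}$. Assumption~(c) gives the Taylor bound
\[
  f_\theta^{(k)}(\mathbf{s})\leq 1-\mathbf{r}_k(\mathbf{M}_\theta)\cdot(\mathbf{1}-\mathbf{s})+\tfrac{1}{2}MN^2\|\mathbf{1}-\mathbf{s}\|_\infty^2,
\]
and on $B_\delta$ the quadratic error is absorbed by subtracting a uniform decrement $\rho=\rho(\delta)$ from every positive entry of $\mathbf{M}_\theta$, producing the decreased matrix $\mathbf{A}_\theta$. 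Uniform allowability~(b) with $\rho<\alpha$ keeps $\mathbf{A}_\theta$ in $\mathscr{A}$ with the same sparsity pattern as $\mathbf{M}_\theta$, so $\mathcal{A}$ is still good and its Lyapunov exponent $\lambda_A$ tends to $\lambda$ as $\rho\downarrow 0$; pick $\rho$ so small that $\lambda_A>0$. Whenever $\mathbf{u}_{n-1}(\sigma\overline{\pmb{\theta}})\leq\delta\mathbf{1}$ this produces $\mathbf{u}_n(\overline{\pmb{\theta}})\geq\mathbf{A}_{\theta_1}\mathbf{u}_{n-1}(\sigma\overline{\pmb{\theta}})$, and iterating $m$ times along a stretch where the smallness persists gives
\[
  \|\mathbf{u}_n(\overline{\pmb{\theta}})\|\geq(\mathbf{A}_{\theta_1}\cdots\mathbf{A}_{\theta_m})_\ast\cdot\|\mathbf{u}_{n-m}(\sigma^m\overline{\pmb{\theta}})\|.
\]
Applying uniform allowability inductively also yields the unconditional lower bound $\mathbf{u}_k\geq\alpha^k\mathbf{1}$, so the last factor is at least $N\alpha^{n-m}$; by Lemma~\ref{z60} applied to $\mathcal{A}$, $(\mathbf{A}_{\theta_1}\cdots\mathbf{A}_{\theta_m})_\ast\geq e^{(\lambda_A-\varepsilon)m}$ a.s.\ for large $m$, and choosing $n-m$ sublinear in $m$ forces $\|\mathbf{u}_n\|\to\infty$, contradicting $\|\mathbf{u}_n\|\leq N$.

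\textbf{Main obstacle.} The crux is securing a positive-measure set of $\overline{\pmb{\theta}}$ along which the simultaneous smallness $\mathbf{u}_{n-j}(\sigma^j\overline{\pmb{\theta}})\leq\delta\mathbf{1}$ holds for $j=1,\dots,m$ with $n-m$ sublinear in $m$; equivalently, controlling $\max_{1\leq j\leq m}N(\sigma^j\overline{\pmb{\theta}},\delta)=o(m)$ for the a.s.\ finite first-entry time $N(\cdot,\delta)$ of $\mathbf{u}_\cdot$ into $B_\delta$. I plan to produce this by combining Egorov (making the convergence $\mathbf{u}_n\to\mathbf{0}$ uniform on a near-full-measure set) with Birkhoff (controlling the hitting density of that set by the shifted orbit), then carefully balancing $\delta,\rho,m,n$. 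The auxiliary objects $\psi$, $R(t)$ and $\varphi_v(t)$ from the notation table presumably codify precisely this bookkeeping, plausibly by replacing the rigid ``uniform-in-$j$ smallness'' with a trajectory-level potential compatible with the Lyapunov growth.
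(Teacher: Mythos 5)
Your plan for part (2) is correct and takes a genuinely different, in fact leaner, route than the paper. You work with the limiting extinction vector and the exact fixed-point identity $\mathbf{q}(\overline{\pmb{\theta}})=\mathbf{f}_{\theta_1}(\mathbf{q}(\sigma\overline{\pmb{\theta}}))$ (justified by monotone convergence in \eqref{x81}), propagate the event $\{q^{(k)}=1\}$ through the sparsity pattern (valid because $f^{(k)}_{\theta}(\mathbf{s})=1$ forces $s_j=1$ for every $j\in\mathfrak{W}^{\theta,k}$), and finish by hitting the cylinder of the strictly positive product from Definition \ref{z19}(2) along the orbit, using measure preservation. The paper instead argues quantitatively at finite levels: Lemma \ref{x34} propagates ``$f^{(k_1)}_{\overline{\pmb{\theta}}|_K}(\mathbf{0})>1-\delta$'' into ``all coordinates of $\mathbf{f}_{\overline{\pmb{\theta}}_{n+1}^{K}}(\mathbf{0})$ exceed $1-\delta p_*^{-n}$'', giving $A_n^C\cap\text{\bf Bad}_{k_1}\subset\sigma^{-n}X_t$ with $\nu(X_t)$ small by the hypothesis. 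Your version avoids the $\delta p_*^{-n}$ bookkeeping; both are fine.

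For part (1), however, there is a genuine gap, and it is exactly the step you flag as the ``main obstacle''. Your local ingredients match the paper's (the decreased matrices $\mathbf{A}_\theta$ and the minorization $\mathbf{u}_n\geq\mathbf{A}_{\theta_1}\mathbf{u}_{n-1}$ on $B_\delta$ is Lemma \ref{x76} rewritten, modulo the fact that your Taylor error must be restricted to the support coordinates via Fact \ref{x75}, since $\|\mathbf{1}-\mathbf{s}\|_\infty$ may be attained outside $\mathfrak{W}^{\theta,k}$ where the decrement vanishes). But the chaining requires $\mathbf{u}_{n-j}(\sigma^{j}\overline{\pmb{\theta}})\leq\delta\mathbf{1}$ simultaneously for all $j=1,\dots,m$ with $m$ large, i.e.\ $\sigma^{j}\overline{\pmb{\theta}}\in E$ for all $j\leq m$ for an Egorov set $E$ with $\nu(E)<1$; Birkhoff controls only the visit frequency to $E$, not run lengths, and for a general ergodic $\nu$ the set $\bigcap_{j\leq m}\sigma^{-j}E$ can have measure zero (even be empty) for large $m$ — long consecutive runs inside a set of measure $1-\epsilon$ need not occur at all (and the union-bound or i.i.d.\ fixes are circular, since the Egorov threshold, hence the needed $m$, depends on $\epsilon$). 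Patching the bad steps with the unconditional estimate $1-f^{(k)}_\theta(\mathbf{s})\geq\alpha\max_{i\in\mathfrak{W}^{\theta,k}}(1-s_i)$ destroys the column structure (a one-entry-per-row minorant need not be column-allowable), so the $(\cdot)_*$ chaining breaks. This is precisely the difficulty the paper's devices $\psi$, $\mathbf{h}_\theta=\mathbf{g}_\theta\circ\psi$, $\varphi_\mu$, $\varphi_\gamma$ and the stopping time $Q=\max T$ are built to remove: $\mathbf{h}_\theta$ dominates $\mathbf{f}_\theta$ on all of $[0,1]^N$ (Fact \ref{z38}), so no return to a smallness region is ever needed, and a deterministic iteration of Lemma \ref{z78} gives the uniform bound $\|\mathbf{f}_{\overline{\pmb{\theta}}|_m}(\mathbf{0})\|\leq N-\varepsilon$ for all large $m$, directly for a.e.\ fixed environment, with no contradiction, Egorov, or zero--one law. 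As written, your part (1) is a plausible plan whose central step is unresolved and, in the stated Egorov--Birkhoff form, would fail for general ergodic environments.
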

The following Corollary is an immediate consequence of the theorem.
\begin{corollary}\label{x46}
	Under the conditions of Theorem \ref{z68} if $\lambda(\nu, \mathcal{M})>0$, then $\mathbf{q}(\overline{\pmb{\theta}})< \mathbf{1}\text{ for }\nu \text{-almost every }\overline{\pmb{\theta}}\in \Sigma$.
\end{corollary}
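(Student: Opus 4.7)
The corollary is a direct chaining of the two conclusions of Theorem \ref{z68}. The plan is to apply part (1) to produce the set of environments where $\mathbf{q}(\overline{\pmb{\theta}})\ne\mathbf{1}$, and then feed this set into part (2) to upgrade the conclusion from coordinate-wise ``not all ones'' to coordinate-wise strictly less than $\mathbf{1}$.

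More precisely, under the stated hypotheses (goodness of $\mathcal{M}$, uniform allowability, and the uniform second-derivative bound), part (1) of Theorem \ref{z68} tells us that the set
\begin{equation*}
E:=\{\overline{\pmb{\theta}}\in\Sigma:\mathbf{q}(\overline{\pmb{\theta}})\ne\mathbf{1}\}
\end{equation*}
satisfies $\nu(E)=1$. In particular, the hypothesis ``$\mathbf{q}(\overline{\pmb{\theta}})\ne\mathbf{1}$ for $\nu$-almost every $\overline{\pmb{\theta}}\in\Sigma$'' required by part (2) of the same theorem is verified. Thus part (2) applies and yields
\begin{equation*}
\mathbf{q}(\overline{\pmb{\theta}})<\mathbf{1}\qquad\text{for }\nu\text{-almost every }\overline{\pmb{\theta}}\in\Sigma,
\end{equation*}
which is exactly the statement of the corollary.

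There is no obstacle here since the two parts of Theorem \ref{z68} are designed to compose in this order. The only thing worth noting in the write-up is that the two almost-sure sets coming from parts (1) and (2) are, as usual, intersected to give a single full-measure set on which both conclusions hold simultaneously; since the intersection of countably many (here two) full-measure sets is still full-measure, this is automatic.
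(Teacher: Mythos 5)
Your proof is correct and matches the paper's intent exactly: the paper labels this corollary an immediate consequence of Theorem \ref{z68}, obtained by feeding the conclusion of part (1) into the hypothesis of part (2), which is precisely your chaining argument.
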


\begin{definition}
	We say that an MBPRE $\{\mathbf{Z}_n\}_{n=0}^{\infty}$ is \texttt{strongly regular} if there exists an $n$ such that $\mathbb{P}(\min_{i\in[N]}\mathbb{P}(\|\mathbf{Z}_n\|>1|\mathbf{Z}_0=\mathbf{e}_i, \mathcal{V}=\overline{\pmb{\theta}})>0)>0$.
\end{definition}

\begin{fact}\label{x44}
	Assume that the pgfs satisfies that for a $\theta\in\mathcal{I}$ such that $\nu([\theta])>0$, we have that for all $i\in[N]$ 
	\begin{equation}
		f^{(i)}_{\theta}[\mathbf{0}]+\sum_{j\in[N]}f^{(i)}_{\theta}[\mathbf{e}_j]<1,
	\end{equation}
then the MBPRE $\{\mathbf{Z}_n\}_{n=0}^{\infty}$ is strongly regular.
\end{fact}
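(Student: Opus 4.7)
The plan is to take $n=1$ in the definition of strong regularity and observe that the hypothesis precisely supplies, on a $\nu$-positive-measure set of environments, the positive-probability ``at least two offspring'' event for every starting type.

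\textbf{Step 1 (reduce to a single-step computation).} Conditioned on $\mathbf{Z}_0 = \mathbf{e}_i$ and $\mathcal{V} = \overline{\pmb{\theta}}$, the first-generation vector $\mathbf{Z}_1$ has pgf $f^{(i)}_{\theta_1}$, and therefore
\begin{equation*}
\mathbb{P}\bigl(\|\mathbf{Z}_1\|>1 \,\big|\, \mathbf{Z}_0 = \mathbf{e}_i,\, \mathcal{V} = \overline{\pmb{\theta}}\bigr)
= 1 - f^{(i)}_{\theta_1}[\mathbf{0}] - \sum_{j\in[N]} f^{(i)}_{\theta_1}[\mathbf{e}_j],
\end{equation*}
since $\|\mathbf{Z}_1\|\leq 1$ iff $\mathbf{Z}_1 = \mathbf{0}$ or $\mathbf{Z}_1 = \mathbf{e}_j$ for some $j\in[N]$. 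In particular this conditional probability depends on the environment only through its first coordinate $\theta_1$.

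\textbf{Step 2 (use the assumed ``productive'' letter).} By hypothesis there is some $\theta\in\mathcal{I}$ with $\nu([\theta])>0$ such that for every $i\in[N]$
\begin{equation*}
f^{(i)}_{\theta}[\mathbf{0}] + \sum_{j\in[N]} f^{(i)}_{\theta}[\mathbf{e}_j] < 1.
\end{equation*}
Combining with Step 1, on the cylinder set $[\theta] = \{\overline{\pmb{\theta}}\in\Sigma : \theta_1 = \theta\}$ we have, for every $i\in[N]$,
\begin{equation*}
\mathbb{P}\bigl(\|\mathbf{Z}_1\|>1 \,\big|\, \mathbf{Z}_0 = \mathbf{e}_i,\, \mathcal{V} = \overline{\pmb{\theta}}\bigr) > 0,
\end{equation*}
and consequently $\min_{i\in[N]} \mathbb{P}(\|\mathbf{Z}_1\|>1 \mid \mathbf{Z}_0 = \mathbf{e}_i, \mathcal{V} = \overline{\pmb{\theta}}) > 0$ on all of $[\theta]$.

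\textbf{Step 3 (conclude).} Since $\nu([\theta])>0$, we get
\begin{equation*}
\mathbb{P}\Bigl(\min_{i\in[N]}\mathbb{P}(\|\mathbf{Z}_1\|>1 \mid \mathbf{Z}_0 = \mathbf{e}_i, \mathcal{V} = \overline{\pmb{\theta}})>0\Bigr) \geq \nu([\theta]) > 0,
\end{equation*}
which is the definition of strong regularity with $n=1$. There is no real obstacle here: the content of the statement is just unpacking what the pgf inequality says about the single-generation offspring distribution and pairing it with the positive-measure assumption on the letter $\theta$.
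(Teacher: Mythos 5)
Your proposal is correct and follows essentially the same route as the paper: take $n=1$ in the definition of strong regularity, note that $\mathbb{P}(\|\mathbf{Z}_1\|>1\mid \mathbf{Z}_0=\mathbf{e}_i,\mathcal{V}=\overline{\pmb{\theta}})=1-f^{(i)}_{\theta_1}[\mathbf{0}]-\sum_{j\in[N]}f^{(i)}_{\theta_1}[\mathbf{e}_j]$ depends only on the first letter, and use $\nu([\theta])>0$ to conclude. No differences worth noting.
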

\begin{proof}
	The assumptions of the definitions are satisfied for $n=1$. Namely, recall that $f^{(i)}_{\theta_1}[\mathbf{0}]+\sum_{j\in[N]}f^{(i)}_{\theta_1}[\mathbf{e}_j]$ is the probability that the process starting with one individual of type $i$ conditioned on the environments first letter being $\theta_1$ has at most one individual at level 1.
	From the assumptions of the fact it follows, that for almost every $\overline{\pmb{\theta}}=(\theta, \theta_2, \dots)$
	\begin{equation}
		\min_{i\in[N]}\mathbb{P}(\|\mathbf{Z}_1\|>1|\mathbf{Z}_0=\mathbf{e}_i, \mathcal{V}=\overline{\pmb{\theta}})=1-\max_{i\in[N]}\left(f^{(i)}_{\theta_1}[\mathbf{0}]+\sum_{j\in[N]}f^{(i)}_{\theta_1}[\mathbf{e}_j]\right)>0,
	\end{equation}
	from which strong regularity follows.
\end{proof}

That is $\{\mathbf{Z}_n\}_{n=0}^{\infty}$ is \underline{not} strongly regular if and only if for every $n$, for $\nu$-almost every $\overline{\pmb{\theta}}$ there exists $i\in[N]$ such that
$$
	\mathbb{P}(\|\mathbf{Z}_n\|\leq 1|\mathbf{Z}_0=\mathbf{e}_i, \mathcal{V}=\overline{\pmb{\theta}})=1.
$$
\begin{corollary}\label{x45}
	Under the conditions of Theorem \ref{z68}
	\begin{enumerate}
		\item If $\lambda(\nu, \mathcal{M})<0$, then $\mathbf{q}(\overline{\pmb{\theta}})=\mathbf{1}\text{ for }\nu \text{-almost every }\overline{\pmb{\theta}}\in \Sigma$.
		\item If $\lambda(\nu, \mathcal{M})=0$, then either $\mathcal{Z}$ is not strongly regular, or $\mathbf{q}(\overline{\pmb{\theta}})=\mathbf{1}$ for $\nu \text{-almost every }\overline{\pmb{\theta}}\in \Sigma$.
	\end{enumerate}
\end{corollary}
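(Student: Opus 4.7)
\emph{Part (1) ($\lambda<0$).} For the subcritical case the plan is a direct Markov inequality. For every $k\in[N]$ and $\nu$-a.e.\ $\overline{\pmb{\theta}}\in\Sigma$, integer-valuedness of $\|\mathbf{Z}_n\|$ gives
\begin{equation*}
\mathbb{P}(\mathbf{Z}_n(\overline{\pmb{\theta}})\ne\mathbf{0}\mid\mathbf{Z}_0=\mathbf{e}_k)\le \mathbb{E}[\|\mathbf{Z}_n\|\mid\mathcal{V}=\overline{\pmb{\theta}},\mathbf{Z}_0=\mathbf{e}_k]=\mathbf{e}_k^T\mathbf{M}_{\overline{\pmb{\theta}}|_n}\mathbf{1}\le\|\mathbf{M}_{\overline{\pmb{\theta}}|_n}\|.
\end{equation*}
Definition \ref{x84} makes $\|\mathbf{M}_{\overline{\pmb{\theta}}|_n}\|$ decay exponentially to $0$, so letting $n\to\infty$ in \eqref{x80} yields $\mathbf{q}(\overline{\pmb{\theta}})=\mathbf{1}$ $\nu$-a.s.

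\emph{Part (2) ($\lambda=0$).} For the critical case I would prove the contrapositive: assume $\lambda=0$ and $\mathbf{q}(\overline{\pmb{\theta}})\ne\mathbf{1}$ on a set of positive $\nu$-measure, and deduce that $\mathcal{Z}$ is not strongly regular. Theorem \ref{z68}(2) first upgrades the assumption to $\mathbf{q}<\mathbf{1}$ $\nu$-a.s., so the ``survival'' vector $\mathbf{u}_n:=\mathbf{1}-\mathbf{q}_n$ decreases monotonically to $\mathbf{u}_\infty>\mathbf{0}$ a.s. A second-order Taylor expansion of $\mathbf{f}_\theta$ at $\mathbf{1}$, whose remainder $\mathbf{R}_\theta$ is uniformly bounded by assumption (c) as $\|\mathbf{R}_\theta(\mathbf{s})\|\le CM\|\mathbf{1}-\mathbf{s}\|^2$, converts the fixed-point identity \eqref{x80} into the Bellman-type recursion
\begin{equation*}
\mathbf{u}_\infty(\overline{\pmb{\theta}})=\mathbf{M}_{\theta_1}\mathbf{u}_\infty(\sigma\overline{\pmb{\theta}})-\mathbf{R}_{\theta_1}(\mathbf{q}(\sigma\overline{\pmb{\theta}})).
\end{equation*}
Using Lemma \ref{z60} and uniform allowability (b) I would next produce a positive random co-direction $\mathbf{v}_n(\overline{\pmb{\theta}})$ for the transposed cocycle so that $\mathbf{v}_n\cdot\mathbf{u}_n$ is comparable to $\|\mathbf{u}_n\|$ uniformly; pairing the recursion with $\mathbf{v}_n$ collapses it to a one-dimensional ergodic inequality whose log-increments have mean $-\lambda=0$.

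The crux of the argument—and the \emph{main obstacle}—is a zero-drift ergodic oscillation theorem (of Atkinson / Chung--Fuchs type) applied to this one-dimensional process. Provided the quadratic corrections $\mathbf{R}_\theta$ are not identically zero on the cone in which $\mathbf{u}_\infty$ lives, their accumulated effect along the orbit forces $\liminf_n\mathbf{v}_n\cdot\mathbf{u}_n=0$, contradicting $\mathbf{u}_\infty>\mathbf{0}$. The remaining degenerate alternative $\mathbf{R}_\theta\equiv\mathbf{0}$ translates, via the probabilistic meaning of $\mathbf{R}_\theta$ as the covariance structure of the offspring distribution together with uniform allowability, into the existence—for $\nu$-a.e.\ $\overline{\pmb{\theta}}$ and every $n$—of a type whose $n$-th generation progeny has size at most $1$ almost surely, which is precisely the negation of strong regularity. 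The hard step is keeping $-\mathbf{R}_{\theta_1}$ under quantitative control through the zero-drift oscillations while uniform allowability (b) prevents $\mathbf{v}_n$ from collapsing to the boundary of the positive orthant; Part (1), the Taylor bound, and the final probabilistic translation of ``$\mathbf{R}\equiv\mathbf{0}$'' are all essentially soft bookkeeping.
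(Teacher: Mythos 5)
Your Part (1) is correct and in fact more self-contained than the paper's treatment: the Markov bound $\mathbb{P}(\mathbf{Z}_n\neq\mathbf{0}\mid \mathbf{Z}_0=\mathbf{e}_k,\mathcal{V}=\overline{\pmb{\theta}})\le \mathbf{e}_k^T\mathbf{M}_{\overline{\pmb{\theta}}|_n}\mathbf{1}\le\|\mathbf{M}_{\overline{\pmb{\theta}}|_n}\|$ together with $\frac1n\log\|\mathbf{M}_{\overline{\pmb{\theta}}|_n}\|\to\lambda<0$ a.s.\ gives $\mathbf{q}=\mathbf{1}$ a.s., whereas the paper simply invokes Tanny's Theorem 9.6 after noting that Condition Q is exactly part (2) of Theorem \ref{z68}.

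Part (2), however, has a genuine gap: the critical case is precisely the hard content of the corollary, and you do not prove it — you reduce it (heuristically) to ``a zero-drift ergodic oscillation theorem'' and leave that as the admitted main obstacle. This is the step the paper outsources entirely to Tanny's Theorem 9.6, so what you have is a plan for re-proving Tanny's critical-case result rather than a proof. Concretely, three links in your chain are unsupported. First, the existence of a random co-direction $\mathbf{v}_n$ with $\mathbf{v}_n\cdot\mathbf{u}_n$ uniformly comparable to $\|\mathbf{u}_n\|$ is a nontrivial cone-contraction statement; under the paper's hypotheses the matrices are only allowable, with strict positivity available only along one cylinder word of positive measure, so uniform non-degeneracy of $\mathbf{v}_n$ does not come for free. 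Second, the oscillation step itself: the one-dimensional quantity you obtain is not a stationary zero-mean random walk but a subadditive-type cocycle perturbed by the state-dependent corrections $\mathbf{R}_{\theta}(\mathbf{q}(\sigma^n\overline{\pmb{\theta}}))$, and showing that their ``accumulated effect'' forces $\liminf_n \mathbf{v}_n\cdot\mathbf{u}_n=0$ is exactly the quantitative heart of the critical-case argument; asserting it does not establish it. Third, the degenerate alternative is not handled correctly: you would only get vanishing of the quadratic terms along the directions/types actually visited by $\mathbf{u}_\infty$, and the negation of strong regularity requires, for every $n$ and a.e.\ $\overline{\pmb{\theta}}$, a type $i$ with $\mathbb{P}(\|\mathbf{Z}_n\|\le 1\mid\mathbf{Z}_0=\mathbf{e}_i,\mathcal{V}=\overline{\pmb{\theta}})=1$; passing from partial degeneracy of the second derivatives to this statement is itself an argument you have not given. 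Unless you either carry out these steps or, as the paper does, quote Tanny's Theorem 9.6 (with Condition Q verified by Theorem \ref{z68}(2)), Part (2) remains unproved.
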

\begin{proof}
	The assertions of the corollary follows from \cite[Theorem 9.6]{zbMATH03738673} and that Condition Q is precisely part $(2)$ of Theorem \ref{z68}.
\end{proof}

\begin{corollary}\label{x54}
	If $\lambda>0$ then
	\begin{equation}
	\lim_{n\to\infty} \frac{1}{n}\log \|\mathbf{Z}_n\|=\lambda
	\end{equation}
	almost surely conditioned on $\{\|\mathbf{Z}_n\|\not\to 0 \text{ as } n\to \infty\}$
\end{corollary}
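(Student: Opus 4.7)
The corollary is proved by matching upper and lower bounds on $n^{-1}\log\|\mathbf{Z}_n\|$.

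\textit{Upper bound} ($\limsup_n n^{-1}\log\|\mathbf{Z}_n\|\le\lambda$ almost surely). For any $\varepsilon>0$, conditional Markov's inequality combined with $\mathbb{E}[\|\mathbf{Z}_n\|\mid\mathcal{V}=\overline{\pmb{\theta}},\mathbf{Z}_0=\mathbf{z}_0]=\mathbf{z}_0^T\mathbf{M}_{\overline{\pmb{\theta}}|_n}\mathbf{1}$ gives
$$\mathbb{P}\!\left(\|\mathbf{Z}_n\|\ge e^{n(\lambda+\varepsilon)}\,\Big|\,\mathcal{V}=\overline{\pmb{\theta}},\mathbf{Z}_0=\mathbf{z}_0\right)\le \|\mathbf{z}_0\|\cdot\|\mathbf{M}_{\overline{\pmb{\theta}}|_n}\|\cdot e^{-n(\lambda+\varepsilon)}.$$
By Definition~\ref{x84} the right side decays like $e^{-n\varepsilon+o(n)}$ for $\nu$-a.e.\ $\overline{\pmb{\theta}}$, so the conditional Borel--Cantelli lemma gives the bound after letting $\varepsilon\downarrow 0$.

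\textit{Lower bound on the survival event.} I would proceed in three steps. \emph{(i)} Use the branching decomposition $\mathbf{Z}_n=\sum_{i}\sum_{j}\mathbf{Y}_{j,n}^{(i)}$ from Section~\ref{z95}, together with hypothesis~(c) of Theorem~\ref{z68} (which bounds $\mathrm{Var}(\|\mathbf{Y}_{j,n}^{(i)}\|)$ uniformly), to estimate the quenched variance of $\|\mathbf{Z}_n\|$ generation by generation. Invoking Lemma~\ref{z60} so that $\mathbb{E}[\|\mathbf{Z}_n\|\mid\overline{\pmb{\theta}}]$ and the accumulated variance both grow as $e^{n\lambda+o(n)}$, and using $\lambda>0$ to sum the resulting geometric-type series, one obtains
$$\sup_n \mathbb{E}\!\left[W_n^2\,\Big|\,\overline{\pmb{\theta}}\right]<\infty,\qquad W_n:=\frac{\|\mathbf{Z}_n\|}{\mathbb{E}[\|\mathbf{Z}_n\|\mid\overline{\pmb{\theta}}]},$$
for $\nu$-a.e.\ $\overline{\pmb{\theta}}$. \emph{(ii)} Since $\mathbb{E}[W_n\mid\overline{\pmb{\theta}}]=1$, Paley--Zygmund yields $\mathbb{P}(W_n\ge 1/2\mid\overline{\pmb{\theta}})\ge c(\overline{\pmb{\theta}})>0$ uniformly in $n$; combined with $n^{-1}\log\mathbb{E}[\|\mathbf{Z}_n\|\mid\overline{\pmb{\theta}}]\to\lambda$ (Definition~\ref{x84}), this gives $\limsup_n n^{-1}\log\|\mathbf{Z}_n\|\ge\lambda$ with positive quenched probability. \emph{(iii)} To upgrade ``positive probability'' to ``full probability on $\{\|\mathbf{Z}_n\|\not\to\mathbf{0}\}$'' and to sharpen $\limsup$ to $\liminf$, iterate Theorem~\ref{z68} on the shifted environments $\sigma^m\overline{\pmb{\theta}}$: each individual alive at generation $m$ seeds an independent sub-MBPRE in the environment $\sigma^m\overline{\pmb{\theta}}$, and by Step~(ii) each such subtree attains the rate-$\lambda$ growth with positive conditional probability. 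Independence across these subtrees together with Borel--Cantelli promotes the rate-$\lambda$ growth to a.s.\ on the survival event, and combining this with the upper bound (applied to $(\sigma^m\overline{\pmb{\theta}})$) pins $\liminf$ to $\lambda$.

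\textit{Main obstacle.} The key technical step is the quenched second-moment bound in~(i). Unlike the classical (constant-environment) Galton--Watson case, no Perron--Frobenius left eigenvector is available to turn $\mathbf{Z}_n$ into a martingale after normalization, and the matrices $\mathbf{M}_\theta$ need not be strictly positive. Controlling the variance therefore forces one to use Lemma~\ref{z60} (the identity $\lambda=\lambda_*$) so that every row of $\mathbf{M}_{\overline{\pmb{\theta}}|_n}$ grows at rate $e^{n\lambda+o(n)}$, and uniform allowability~(b) to prevent individual generations from degenerating. Once the $L^2$ bound is in hand, Steps~(ii) and~(iii) follow classical Kesten--Stigum patterns adapted to the random environment.
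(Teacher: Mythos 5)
Your route is genuinely different from the paper's: the paper does not reprove the growth-rate result at all, but deduces it from Tanny's theorem \cite[Theorem 9.6(3)]{zbMATH03738673}, checking its two hypotheses --- Condition Q, which is exactly part (2) of Theorem \ref{z68}, and stability of the MBPRE, which follows from Hennion's theorem in the form of Remark \ref{z52}, i.e.\ the fact that \emph{all} row sums of $\mathbf{M}_{\overline{\pmb{\theta}}|_n}$ grow at the common rate $\lambda$. Your plan is instead to reprove the substance of Tanny's theorem by a quenched second-moment (Kesten--Stigum/Paley--Zygmund) argument. That is a legitimate ambition, but as written it has genuine gaps, so it cannot stand in for the proof.

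Concretely: in step (i) the normalisation $\mathbb{E}[\|\mathbf{Z}_n\|\mid\overline{\pmb{\theta}},\mathbf{Z}_0=\mathbf{e}_i]=\mathbf{e}_i^T\mathbf{M}_{\overline{\pmb{\theta}}|_n}\mathbf{1}$ is a \emph{row} sum, so the growth input you need is \eqref{x53} of Remark \ref{z52}, not Lemma \ref{z60} (which concerns minimal column sums); moreover the variance recursion involves products $\mathbf{M}_{\theta_{k+1}}\cdots\mathbf{M}_{\theta_n}$ for all intermediate $k\leq n$, and the $e^{o(n)}$ corrections from the ergodic theorem are not uniform in $k$, so ``sum the geometric series'' requires a uniformity argument you have not supplied. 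More seriously, step (iii) is where the real content of the statement lies and your sketch does not close it: since $W_n$ is not a quenched martingale, Paley--Zygmund only gives $\limsup_n n^{-1}\log\|\mathbf{Z}_n\|\geq\lambda$ with positive quenched probability for each fixed environment, and upgrading this to $\lim_n n^{-1}\log\|\mathbf{Z}_n\|=\lambda$ almost surely \emph{on the survival event} needs (a) that the population size tends to infinity on survival (a nontrivial fact, related to Condition Q/strong regularity, not a consequence of $\lambda>0$ alone), (b) that the constants $c(\sigma^m\overline{\pmb{\theta}})$ along shifted environments do not degenerate (an ergodicity argument must be made explicit), and (c) a mechanism converting ``some subtree has $\limsup$ growth at rate $\lambda$'' into a $\liminf$ bound for the whole population at all large times. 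Items (a)--(c) are precisely what Tanny's stability-plus-Condition-Q machinery delivers; either cite that theorem as the paper does, or supply these arguments in full.
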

	\begin{proof}
		The assertion follows from a combination of Theorem \ref{z68}, a result of Hennion and \cite[Theorem 9.6]{zbMATH03738673}. Namely, Corollary \ref{x54} is essentially the conclusion of \cite[Theorem 9.6, (3)]{zbMATH03738673}. We only need to verify that both of the assumptions of this theorem are satisfied. 
		One of them is called Condition Q. This holds because this is exactly the assertion of Theorem \ref{z68} part (2). The other assumption is the stability of the MBPRE (see \cite[Definition 9.5]{zbMATH03738673}). This holds, because of a result of Hennion, see Remark \ref{z52} in particular \eqref{z53}.

	\end{proof}
	\section{Proof of Theorem \ref{z68}}\label{z36}
Throughout this section we always use the notation of Theorem \ref{z68}.

\subsection{Preparation for the proof of Theorem \ref{z68} part (1)}

Let 
$$
	\lambda=\lim\limits_{n\to \infty}\frac{1}{n}\log \|\mathbf{M}_{\overline{\pmb{\theta}}|_n}\|,\text{ for $\nu$ almost every $\overline{\pmb{\theta}}\in\mathcal{I}^{\mathbb{N}}$}.
$$
Assume (the assumption of Theorem \ref{z68} part (1)) $\lambda>0$. Then we can choose
\begin{equation}
	\label{x21}
	0<\rho<1\quad
	\text{ such that}\quad
	1<\rho\e{\lambda}.
\end{equation}
Define the $N \times N$ matrices $\mathbf{A}_{\theta}$ for $\theta\in\mathcal{I}$ as
\begin{equation}\label{x92}
	\mathbf{A}_{\theta}=\rho \mathbf{M}_{\theta}.
\end{equation}

\begin{lemma}\label{x77}
	For $\nu$ almost every $\overline{\pmb{\theta}}\in \Sigma$ 
	\begin{equation*}
		\lim\limits_{n\to \infty}\frac{1}{n}\log (\mathbf{A}_{\overline{\pmb{\theta}}|_n})_*=\log(\rho)+\lambda>0.
	\end{equation*}
\end{lemma}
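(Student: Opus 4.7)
The proof is essentially a direct computation combining the scaling $\mathbf{A}_\theta = \rho \mathbf{M}_\theta$ with Lemma \ref{z60}, which identifies the Lyapunov exponent with the column-sum exponent for good families of allowable matrices.

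The plan is as follows. First I would exploit the linearity of matrix multiplication in the scalar factor: since $\mathbf{A}_\theta = \rho \mathbf{M}_\theta$, one immediately has
\begin{equation*}
\mathbf{A}_{\overline{\pmb{\theta}}|_n} = \mathbf{A}_{\theta_1}\cdots\mathbf{A}_{\theta_n} = \rho^n\, \mathbf{M}_{\theta_1}\cdots\mathbf{M}_{\theta_n} = \rho^n\, \mathbf{M}_{\overline{\pmb{\theta}}|_n}.
\end{equation*}
Because $(\cdot)_*$ (the minimum column sum) is positively homogeneous of degree one, this gives
\begin{equation*}
(\mathbf{A}_{\overline{\pmb{\theta}}|_n})_* = \rho^n\,(\mathbf{M}_{\overline{\pmb{\theta}}|_n})_*,
\end{equation*}
and taking $\tfrac{1}{n}\log$ of both sides yields
\begin{equation*}
\frac{1}{n}\log (\mathbf{A}_{\overline{\pmb{\theta}}|_n})_* = \log \rho + \frac{1}{n}\log(\mathbf{M}_{\overline{\pmb{\theta}}|_n})_*.
\end{equation*}

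Next I would invoke the hypothesis that $\mathcal{M}=\{\mathbf{M}_\theta\}_{\theta\in\mathcal{I}}$ is good with respect to $\nu$ (assumption \ref{z16} of Theorem \ref{z68}, which is in force throughout Section \ref{z36}). This permits us to apply Lemma \ref{z60}, which gives $\lambda_*(\nu,\mathcal{M}) = \lambda(\nu,\mathcal{M}) = \lambda$. By Definition \ref{x83}, this means that for $\nu$-almost every $\overline{\pmb{\theta}}\in\Sigma$,
\begin{equation*}
\lim_{n\to\infty}\frac{1}{n}\log(\mathbf{M}_{\overline{\pmb{\theta}}|_n})_* = \lambda.
\end{equation*}
Substituting into the previous display gives $\lim_{n\to\infty}\tfrac{1}{n}\log(\mathbf{A}_{\overline{\pmb{\theta}}|_n})_* = \log\rho + \lambda$ almost surely, which is the equality claimed.

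Finally, to see that this limit is strictly positive, I would use the defining property of $\rho$ from \eqref{x21}: $1 < \rho e^{\lambda}$, whence $0 < \log\rho + \lambda$. This completes the proof. There is no real obstacle here; the whole argument is a one-line rescaling once Lemma \ref{z60} is available, and the only reason the lemma merits separate statement is that it packages the identity into a convenient form for subsequent use in the proof of Theorem \ref{z68}(1).
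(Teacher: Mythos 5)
Your argument is correct and is essentially identical to the paper's own proof: both reduce $(\mathbf{A}_{\overline{\pmb{\theta}}|_n})_* = \rho^n(\mathbf{M}_{\overline{\pmb{\theta}}|_n})_*$, invoke Lemma \ref{z60} to get the almost-sure limit $\lambda$ for the minimum column sums of the $\mathbf{M}$-products, and conclude positivity from the choice of $\rho$ in \eqref{x21}. No gaps.
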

\begin{proof}
Let
\begin{equation}\label{z61}
	H:=\{\overline{\pmb{\theta}}\in \Sigma: \lim_{n\to \infty}\frac{1}{n}\log(\mathbf{M}_{\overline{\pmb{\theta}}|_n})_*\text{ exists and equals to } \lambda\}\subset\Sigma.
\end{equation}
It follows from Lemma \ref{z60} that $\nu(H)=1$.
Fix $\overline{\pmb{\theta}}=(\theta_1, \theta_2, \dots)\in H$. Then from $(\mathbf{A}_{\overline{\pmb{\theta}}|_n})_*=\rho^n(\mathbf{M}_{\overline{\pmb{\theta}}|_n})_*$ and the definition of $\rho$ the assertion follows.
\end{proof}

Define the set
\begin{equation}\label{x91}
	\mathfrak{W}:=\left\{(k,i,\theta)\in [N]^2\times\mathcal{I}: \mathbf{M}_{\theta}(k,i)>0 \right\}
	\nonumber  =\left\{(k,i,\theta)\in [N]^2\times\mathcal{I}: \mathbf{A}_{\theta}(k,i)>0 \right\}.
\end{equation}
Moreover, for $k\in [N],\ \theta \in \mathcal{I} $ we define
\begin{equation*}
	\mathfrak{W}^{\theta ,k}:=\left\{i\in [N]:
	(k,i,\theta)\in \mathfrak{W}
	\right\}.
\end{equation*}

For a matrix $\mathbf{B}$ let $\mathbf{r}_{k}(\mathbf{B})$ and $\mathbf{c}_{k}(\mathbf{B})$ denote the $k$-th row and column vector of $\mathbf{B}$ respectively.

For $\theta\in\mathcal{I}$, $\mathbf{s}\in[0,1]^N$ let 
\begin{equation}\label{x90}
	g_{\theta}^{(k)}(\mathbf{s})= 1-\mathbf{r}_{k}(\mathbf{A}_{\theta})\cdot (\mathbf{1}-\mathbf{s}), \text{ and }\mathbf{g}_{\theta}(\mathbf{s})=(g_{\theta}^{(0)}(\mathbf{s}), \dots, g_{\theta}^{(N-1)}(\mathbf{s}))\!=\!\mathbf{1}-\mathbf{A}_{\theta}(\mathbf{1}-\mathbf{s}).
\end{equation}
%\label{x98}

This immediately implies (which we frequently use without mentioning) that for $\pmb{\theta}\in\mathcal{I}^{n}$ we have for $\mathbf{s}\in[0, 1]^N$
\begin{equation}\label{x55}
\mathbf{g}_{\pmb{\theta}}(\mathbf{s})=\mathbf{1}-\mathbf{A}_{\pmb{\theta}}(\mathbf{1}-\mathbf{s}).
\end{equation}

\begin{remark}[The meaning of $\mathbf{g}_{\theta}$]\label{x74}
	For $\theta\in\mathcal{I}$, $k\in [N]$ consider $\{(\mathbf{s}, f_{\theta}^{(k)}(\mathbf{s}))\in \mathbb{R}^{N+1}: \mathbf{s}\in[0,1]^N\}$ the graph of the function $f_{\theta}^{(k)}$. We denote the tangent plane of this graph at $\mathbf{1}\in\mathbb{R}^{N}$ by
\begin{equation}\label{x62}
	t_{\theta}^{(k)}(\mathbf{s}):=f_{\theta}^{(k)}(\mathbf{1})-(f_{\theta}^{(k)})'(\mathbf{1})\cdot (\mathbf{1}-\mathbf{s})= 1-\mathbf{r}_{k}(\mathbf{M}_{\theta})\cdot (\mathbf{1}-\mathbf{s}).
\end{equation}
By Taylor's Theorem
\begin{align*}
	f^{(k)}_{\theta}(\mathbf{s}) & =t^{(k)}_{\theta}(\mathbf{s})+\frac{1}{2}(\mathbf{1}-\mathbf{s})^{T}\left(f_{\theta}^{(k)}\right)''(\mathbf{w})(\mathbf{1}-\mathbf{s}),
\end{align*}
for some $\mathbf{w} \in \{\mathbf{s}+t\left(\mathbf{1}-\mathbf{s}\right), t\in(0,1)\}$ the line segment connecting $\mathbf{s}$ and $\mathbf{1}$.

Hence, $g_{\theta}^{(k)}(\mathbf{s})$ the analogue of $t_{\theta}^{(k)}(\mathbf{s})$ using the matrices $\mathbf{A}_{\theta}$ instead of $\mathbf{M}_{\theta}$. For a visual depiction in the $N=1$ case see Figure \ref{x64}.
\end{remark}

Let
\begin{equation}\label{x99}
	B_{\delta}:=\{\mathbf{s}\in [0,1]^N: \|\mathbf{1}-\mathbf{s}\|_{\infty}\leq \delta\}
\end{equation}

\begin{lemma}\label{x76}
There exists a $\delta>0$ such that for all $\theta\in\mathcal{I}$ and $\mathbf{s}\in B_{\delta}$, $\mathbf{g}_{\theta}(\mathbf{s})\geq \mathbf{f}_{\theta}(\mathbf{s})$.

\end{lemma}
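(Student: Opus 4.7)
The plan is to compare $f_\theta^{(k)}$ and $g_\theta^{(k)}$ component-by-component via the second-order Taylor expansion already identified in Remark~\ref{x74}, and to use the factor $1-\rho$ built into $\mathbf{A}_\theta$ to absorb the quadratic error term uniformly in $\theta$. Fix $k\in[N]$, $\theta\in\mathcal{I}$, and write $\mathbf{u}:=\mathbf{1}-\mathbf{s}\in[0,\delta]^N$. From the definitions of $t_\theta^{(k)}$ and $g_\theta^{(k)}$ we have the exact identity
\begin{equation*}
g_\theta^{(k)}(\mathbf{s})-t_\theta^{(k)}(\mathbf{s})=(1-\rho)\,\mathbf{r}_k(\mathbf{M}_\theta)\cdot\mathbf{u},
\end{equation*}
so the Taylor formula in Remark~\ref{x74} gives
\begin{equation*}
g_\theta^{(k)}(\mathbf{s})-f_\theta^{(k)}(\mathbf{s})=(1-\rho)\sum_{i\in[N]}\mathbf{M}_\theta(k,i)\,u_i-\tfrac{1}{2}\mathbf{u}^{T}\bigl(f_\theta^{(k)}\bigr)''(\mathbf{w})\,\mathbf{u}.
\end{equation*}
I want to show this is $\geq 0$ whenever $\|\mathbf{u}\|_\infty\leq\delta$ for a small enough $\delta$ independent of $\theta$.

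The first observation I would make is the \emph{support restriction}: if $\mathbf{M}_\theta(k,i)=0$, i.e.\ $i\notin\mathfrak{W}^{\theta,k}$, then $\partial_{s_i}f_\theta^{(k)}(\mathbf{1})=\sum_{\mathbf{w}}f_\theta^{(k)}[\mathbf{w}]w_i=0$, which forces $f_\theta^{(k)}[\mathbf{w}]=0$ for every $\mathbf{w}$ with $w_i\geq 1$. Consequently $f_\theta^{(k)}$ does not depend on $s_i$ at all, and all of its higher partial derivatives with respect to $s_i$ vanish identically on $[0,1]^N$. This means both the linear sum and the quadratic form in the display above are actually supported on indices in $\mathfrak{W}^{\theta,k}$, so the relevant quantity is $S:=\sum_{i\in\mathfrak{W}^{\theta,k}}u_i\in[0,N\delta]$.

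Next I would bound the two pieces. For the quadratic term, since $f_\theta^{(k)}$ is a pgf all of its second partials are non-negative and coordinate-monotone on $[0,1]^N$, hence $\partial_j\partial_i f_\theta^{(k)}(\mathbf{w})\leq \partial_j\partial_i f_\theta^{(k)}(\mathbf{1})<M$ by assumption (c) of Theorem~\ref{z68}. Combined with the support restriction this yields
\begin{equation*}
\tfrac{1}{2}\mathbf{u}^{T}\bigl(f_\theta^{(k)}\bigr)''(\mathbf{w})\,\mathbf{u}\leq \tfrac{M}{2}\Bigl(\sum_{i\in\mathfrak{W}^{\theta,k}}u_i\Bigr)^{2}\leq \tfrac{MN\delta}{2}\,S.
\end{equation*}
For the linear term, Remark~\ref{x56} gives $\mathbf{M}_\theta(k,i)>\alpha$ for every $(k,i,\theta)\in\mathfrak{W}$, so
\begin{equation*}
(1-\rho)\sum_{i\in[N]}\mathbf{M}_\theta(k,i)\,u_i\geq (1-\rho)\alpha\,S.
\end{equation*}
Subtracting,
\begin{equation*}
g_\theta^{(k)}(\mathbf{s})-f_\theta^{(k)}(\mathbf{s})\geq S\Bigl[(1-\rho)\alpha-\tfrac{MN\delta}{2}\Bigr].
\end{equation*}
Choosing $\delta:=\frac{(1-\rho)\alpha}{MN}$ (independent of $\theta$ and $k$) makes the bracket non-negative, and the inequality holds for all $k$, $\theta$, and $\mathbf{s}\in B_\delta$. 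In the degenerate case $S=0$ one checks directly that both sides equal $1$, since $f_\theta^{(k)}$ depends only on coordinates in $\mathfrak{W}^{\theta,k}$ (all equal to $1$ in this case).

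The only subtle step is the support restriction: without it the linear term could vanish while the quadratic term stays positive, breaking uniformity. Once that is in place, the uniform allowability constant $\alpha$ together with assumption (c) produce a uniform $\delta$ in a single line.
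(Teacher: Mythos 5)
Your proof is correct and follows essentially the same route as the paper's: the Taylor expansion of $f_\theta^{(k)}$ at $\mathbf{1}$ from Remark~\ref{x74}, the support restriction of Fact~\ref{x75}, the bound $\partial_j\partial_i f_\theta^{(k)}(\mathbf{w})\le\partial_j\partial_i f_\theta^{(k)}(\mathbf{1})<M$, and the uniform allowability bound $\mathbf{M}_\theta(k,i)>\alpha$ on $\mathfrak{W}^{\theta,k}$, leading to a $\delta$ of the same form $(1-\rho)\alpha/(cNM)$. The only cosmetic difference is that you aggregate the quadratic form into $\tfrac{M}{2}S^2$ and compare globally, while the paper compares the coefficients of $(\mathbf{1}-\mathbf{s})_i$ index by index; both are fine.
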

\begin{proof}[Proof of Lemma \ref{x76}]
	Recall from Remark \ref{x74}, that for $\theta\in\mathcal{I}$ and $k\in [N]$, 
\begin{align*}
	& f^{(k)}_{\theta}(\mathbf{s})=t^{(k)}_{\theta}(\mathbf{s})+\frac{1}{2}(\mathbf{1}-\mathbf{s})^{T}\left(f_{\theta}^{(k)}\right)''(\mathbf{w})(\mathbf{1}-\mathbf{s}), \\
	& g^{(k)}_{\theta}(\mathbf{s})=t^{(k)}_{\theta}(\mathbf{s})+\mathbf{r}_k
   (\mathbf{M}_{\theta}-\mathbf{A}_{\theta})(\mathbf{1}-\mathbf{s}), 
\end{align*}
where $t_{\theta}^{(k)}(\mathbf{s})=1-\mathbf{r}_{k}(\mathbf{M}_{\theta})\cdot (\mathbf{1}-\mathbf{s})$, as we stated in \eqref{x62}.

Hence, we only have to prove that there exists a $\delta>0$ such that for all $\theta\in\mathcal{I}$ and $k\in [N]$ and $\mathbf{s}\in B _{\delta}$
	\begin{equation*} 
		(\mathbf{1}-\mathbf{s})^{T}\left(f_{\theta}^{(k)}\right)''(\mathbf{w})(\mathbf{1}-\mathbf{s})\leq \mathbf{r}_k
		(\mathbf{M}_{\theta}-\mathbf{A}_{\theta})(\mathbf{1}-\mathbf{s}).
	\end{equation*}
	It follows from Fact \ref{x75} in Appendix \ref{x73}, that
	\begin{align*}
		 & (\mathbf{1}-\mathbf{s})^{T}\left(f_{\theta}^{(k)}\right)''(\mathbf{w})(\mathbf{1}-\mathbf{s}) =\sum_{i\in \mathfrak{W}^{\theta ,k}}\left(\sum_{j\in \mathfrak{W}^{\theta ,k}}(\mathbf{1}-\mathbf{s})_j \frac{\partial^2 f_{\theta}^{(k)}}{\partial s_j \partial s_i}(\mathbf{w})\right)(\mathbf{1}-\mathbf{s})_i.
	\end{align*}
	Clearly $\mathbf{r}_k
	(\mathbf{M}_{\theta}-\mathbf{A}_{\theta})(\mathbf{1}-\mathbf{s})=\sum_{i\in \mathfrak{W}^{\theta ,k}}\mathbf{r}_k(\mathbf{M}_{\theta}-\mathbf{A}_{\theta})_i(\mathbf{1}-\mathbf{s})_i.$
	Choose
	\begin{equation}\label{x69}
		\delta = \frac{(1-\rho)\alpha}{2\cdot N\cdot M}.
	\end{equation}
	Let $\mathbf{s}\in B_{\delta}$. With this choice for all $\theta\in\mathcal{I}, k\in [N]$ and $i, j\in \mathfrak{W}^{ \theta ,k}$ we have that $0\leq 1-s_j\leq \delta$. By definition, we have
	$\frac{\partial^2 f_{\theta}^{(k)}}{\partial s_j \partial s_i}(\mathbf{w}) \leq
		\frac{\partial^2 f_{\theta}^{(k)}}{\partial s_j \partial s_i}(\mathbf{1})$ for all $\mathbf{w}\in [0,1]^N$, hence
	\begin{equation*}
		\sum_{j\in \mathfrak{W}^{\theta ,k}}(\mathbf{1}-\mathbf{s})_j \frac{\partial^2 f_{\theta}^{(k)}}{\partial s_j \partial s_i}(\mathbf{w}) \leq \delta \cdot N\cdot\max_{j}
		\frac{\partial^2 f_{\theta}^{(k)}}{\partial s_j \partial s_i}(\mathbf{1})<
		\mathbf{r}_k(\mathbf{M}_\theta -\mathbf{A}_\theta )_i.
	\end{equation*}
	The second inequality holds since $\frac{\partial^2 f_{\theta}^{(k)}}{\partial s_j \partial s_i}(\mathbf{1})<M$ by assumption \ref{z14} of Theorem \ref{z68}. Whereas by the choice of the matrix $\mathbf{A}_{\theta}$ (see \eqref{x92}), 
	$\mathbf{r}_k(\mathbf{M}_\theta -\mathbf{A}_\theta )_i=(1-\rho)\mathbf{M}_\theta(k,i)$ then the uniform allowability assumption (see Remark \ref{x56}) guarantees that $\mathbf{M}_\theta(k,i)>\alpha$, hence we get that $\mathbf{r}_k(\mathbf{M}_\theta -\mathbf{A}_\theta )_i > (1-\rho)\alpha$ for $i\in \mathfrak{W}^{ \theta ,k}$.

\end{proof}
Fix the value of $\delta$ such that the assertion of Lemma \ref{x76} holds.

Now we define $\psi:[0,1]^N\to [0,1]^N$ such that whenever $\mathbf{s}\in B_{\delta}$ then $\psi(\mathbf{s})=\mathbf{s}$ but when $\mathbf{s}\notin B_{\delta}$ then $\psi(\mathbf{s})\in B_\delta$.
Namely,
\begin{equation}
	\label{z34}
	\left(\psi (\mathbf{s})\right)_j:=
	\left\{
	\begin{array}{ll}
		s_j
		, &
		\hbox{if $s_j \geq 1-\delta $;}
		\\
		1-\delta
		, &
		\hbox{if $s_j<1-\delta $}
	\end{array}
	\right.
	\quad
	\text{ for a } j\in[N].
\end{equation}

It immediately follows from the definition, that the function $\psi$ 
has the following monotonicity properties.
\begin{fact}\label{x38}
	For all $\theta \in \mathcal{I}$, $k\in [N]$
	and $\mathbf{s},\mathbf{t}\in[0,1]^N$
	we have
	\begin{enumerate}
		[label=\emph{(\alph*)}]
		\item $\mathbf{s} \leq \psi (\mathbf{s})$, and
		\item for $\mathbf{s} \leq \mathbf{t}$,\
		      $\psi (\mathbf{s}) \leq \psi (\mathbf{t})$.
	\end{enumerate}
\end{fact}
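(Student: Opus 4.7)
The proposal is to observe that the statement does not actually involve $\theta$ or $k$: both assertions are purely about the componentwise behaviour of the coordinate-wise clipping map $\psi$ defined in \eqref{z34}. Since $\psi$ acts coordinate by coordinate, both monotonicity properties reduce to a one-dimensional check on each component $j \in [N]$ separately, which is a short case analysis on whether the input is above or below the threshold $1-\delta$.

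For part (a), fix $j \in [N]$ and consider two cases. If $s_j \geq 1-\delta$, then by definition $(\psi(\mathbf{s}))_j = s_j$, so the inequality holds with equality. If $s_j < 1-\delta$, then $(\psi(\mathbf{s}))_j = 1-\delta > s_j$. In either case $(\psi(\mathbf{s}))_j \geq s_j$, and since this holds for every coordinate we conclude $\mathbf{s} \leq \psi(\mathbf{s})$.

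For part (b), assume $\mathbf{s} \leq \mathbf{t}$, fix $j$, and distinguish cases on the positions of $s_j$ and $t_j$ relative to $1-\delta$. If $s_j \geq 1-\delta$ then $t_j \geq s_j \geq 1-\delta$, so $(\psi(\mathbf{s}))_j = s_j \leq t_j = (\psi(\mathbf{t}))_j$. If $s_j < 1-\delta$ then $(\psi(\mathbf{s}))_j = 1-\delta$; in the subcase $t_j \geq 1-\delta$ we get $(\psi(\mathbf{t}))_j = t_j \geq 1-\delta = (\psi(\mathbf{s}))_j$, and in the subcase $t_j < 1-\delta$ we get $(\psi(\mathbf{t}))_j = 1-\delta = (\psi(\mathbf{s}))_j$. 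Thus in every case $(\psi(\mathbf{s}))_j \leq (\psi(\mathbf{t}))_j$, which gives $\psi(\mathbf{s}) \leq \psi(\mathbf{t})$.

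There is no real obstacle here; the fact is essentially a bookkeeping lemma about a coordinate-wise truncation, and its whole purpose is to package the two monotonicity properties of $\psi$ for later use. I would present it exactly as above, keeping it short and emphasising that the irrelevance of $\theta$ and $k$ is simply because $\psi$ makes no reference to them.
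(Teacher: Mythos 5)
Your case analysis is correct and is exactly the routine componentwise check that the paper leaves implicit when it states that the fact ``immediately follows from the definition'' of $\psi$ in \eqref{z34}; the observation that $\theta$ and $k$ play no role is also accurate. No gap, and no difference in approach beyond your spelling out the details the paper omits.
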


We now define for all $k\in[N], \theta\in\mathcal{I}$ and $\mathbf{s}\in [0,1]^N$
\begin{equation}\label{x97}
	h_{\theta}^{(k)}(\mathbf{s}):= g_{\theta}^{(k)}(\psi(\mathbf{s})) \text{ and }
	\mathbf{h}_{\theta}(\mathbf{s}):= (h_{\theta}^{(0)}(\mathbf{s}), \dots, h_{\theta}^{(N-1)}(\mathbf{s})).
\end{equation}

We summarize the important properties of
$\mathbf{h}_\theta (\mathbf{s})$, but before that we introduce some more notation.
Let
\begin{equation*}
	u:=
	\min\left\{
	1,\min\left\{  \mathbf{A}_{\theta}(k, i):(k, i ,\theta)\in \mathfrak{W}\right\}
	\right\}.
\end{equation*}
It follows from the uniform allowability condition that $u>0$.
Further, let $R(t)$ denote the open ball with respect to the $1$-norm centred at origin with radius $t$ in $\mathbb{R}^N$ and $R^C(t)$ its complement, namely
\begin{equation}\label{x87}
	R(t):=\{ \mathbf{s} \in [0,1]^{N}:\|\mathbf{s}\|<t\} \text{ and } R^{C}(t):=\{ \mathbf{s} \in [0,1]^{N}:\|\mathbf{s}\|\geq t\}.
\end{equation}

\begin{fact}\label{z38}
	For any $\theta \in \mathcal{I}$, $k\in [N]$ and $\mathbf{s}\leq \mathbf{t}\in[0,1]^N$ the following holds.
	\begin{enumerate}[label=\emph{(\alph*)}]
		\item If $\mathbf{s}\in B_{\delta}$,
		we have $\mathbf{h}_{\theta}(\mathbf{s})=\mathbf{g}_{\theta}(\mathbf{s})$; \label{z88}

		\item $\mathbf{h}_{\theta}(\mathbf{1})=\mathbf{1}$;\label{z90}

		\item  $\mathbf{h}_{\theta}(\mathbf{s})\leq \mathbf{h}_{\theta}(\mathbf{t})$; \label{x68}
		 
		\item For any $n\in \N$, $\pmb{\theta}\in\mathcal{I}^n$, we have that $\mathbf{h}_{\pmb{\theta}}(\mathbf{s})\geq \mathbf{f}_{\pmb{\theta}}(\mathbf{s})$;\label{z89}
		 
		\item $\mathbf{0} \leq \mathbf{h}_{\theta}(\mathbf{s})$; \label{x65}

		\item For any $v>N-u\delta$ if $\mathbf{h}_{\theta}(\mathbf{s})\in R^C(v)$, then $\mathbf{s}\in B_{\delta}$ and in particular $\mathbf{h}_{\theta}(\mathbf{s})=\mathbf{g}_{\theta}(\mathbf{s})$.\label{z87}
\end{enumerate}

\end{fact}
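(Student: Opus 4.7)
The plan is to establish the six parts sequentially, using each as a building block for the next. Parts (a) and (b) fall straight out of the definitions: if $\mathbf{s}\in B_\delta$, then $\psi(\mathbf{s})=\mathbf{s}$ by \eqref{z34}, hence $\mathbf{h}_\theta(\mathbf{s})=\mathbf{g}_\theta(\mathbf{s})$; applying this at $\mathbf{s}=\mathbf{1}\in B_\delta$ gives $\mathbf{h}_\theta(\mathbf{1})=\mathbf{g}_\theta(\mathbf{1})=\mathbf{1}-\mathbf{A}_\theta\mathbf{0}=\mathbf{1}$. For part (c), the projection $\psi$ is coordinate-wise monotone by Fact~\ref{x38}, and $\mathbf{g}_\theta(\cdot)=\mathbf{1}-\mathbf{A}_\theta(\mathbf{1}-\cdot)$ is monotone because $\mathbf{A}_\theta$ has nonnegative entries; hence their composition $\mathbf{h}_\theta$ is monotone.

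Part (d) is where there is real content, and I would prove it by induction on $n=|\pmb{\theta}|$. For the base case $n=1$, I split into two subcases. If $\mathbf{s}\in B_\delta$, then $\mathbf{h}_\theta(\mathbf{s})=\mathbf{g}_\theta(\mathbf{s})\geq \mathbf{f}_\theta(\mathbf{s})$ directly by Lemma~\ref{x76}. If $\mathbf{s}\notin B_\delta$, then $\psi(\mathbf{s})\in B_\delta$ and $\psi(\mathbf{s})\geq \mathbf{s}$ by Fact~\ref{x38}, so
\[
\mathbf{h}_\theta(\mathbf{s})=\mathbf{g}_\theta(\psi(\mathbf{s}))\geq \mathbf{f}_\theta(\psi(\mathbf{s}))\geq \mathbf{f}_\theta(\mathbf{s}),
\]
where the first inequality uses Lemma~\ref{x76} and the second the coordinate-wise monotonicity of the pgf $\mathbf{f}_\theta$ on $[0,1]^N$. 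For the inductive step, writing $\pmb{\theta}=(\theta_1,\dots,\theta_n)$ and $\pmb{\theta}':=(\theta_2,\dots,\theta_n)$, I would chain the inductive hypothesis with (c) and the base case to obtain $\mathbf{h}_{\pmb{\theta}}(\mathbf{s})=\mathbf{h}_{\theta_1}(\mathbf{h}_{\pmb{\theta}'}(\mathbf{s}))\geq \mathbf{h}_{\theta_1}(\mathbf{f}_{\pmb{\theta}'}(\mathbf{s}))\geq \mathbf{f}_{\theta_1}(\mathbf{f}_{\pmb{\theta}'}(\mathbf{s}))=\mathbf{f}_{\pmb{\theta}}(\mathbf{s})$.

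Part (e) is then immediate from the $n=1$ case of (d): $\mathbf{h}_\theta(\mathbf{s})\geq \mathbf{f}_\theta(\mathbf{s})\geq \mathbf{0}$ because each component of $\mathbf{f}_\theta$ is a pgf valued in $[0,1]$. For (f) I would argue the contrapositive. Suppose $\mathbf{s}\notin B_\delta$; then some coordinate $j$ satisfies $1-s_j>\delta$, so by \eqref{z34} we have $1-\psi_j(\mathbf{s})=\delta$. Expanding the one-norm and interchanging the order of summation gives
\[
\|\mathbf{h}_\theta(\mathbf{s})\|=N-\sum_{i\in[N]}(1-\psi_i(\mathbf{s}))\sum_{k\in[N]}\mathbf{A}_\theta(k,i).
\]
Since $\mathbf{A}_\theta=\rho\mathbf{M}_\theta$ is allowable, column $j$ contains at least one positive entry, which by the definition of $u$ is $\geq u$; retaining only the $i=j$ term in the outer sum yields $\|\mathbf{h}_\theta(\mathbf{s})\|\leq N-u\delta<v$, contradicting $\mathbf{h}_\theta(\mathbf{s})\in R^C(v)$. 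This proves the contrapositive of the first assertion of (f), and the second assertion is then part (a) applied to the conclusion $\mathbf{s}\in B_\delta$.

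I do not foresee any serious obstacle: the entire statement is essentially a careful unpacking of the definitions of $\psi$, $\mathbf{g}_\theta$, and $\mathbf{h}_\theta$, combined with Lemma~\ref{x76} and the uniform lower bound $u$ on positive entries of $\mathbf{A}_\theta$. The most delicate step is arguably the base case of (d), where the Lemma~\ref{x76} bound must be combined with pgf monotonicity through the projection $\psi$ in order to cover the case $\mathbf{s}\notin B_\delta$.
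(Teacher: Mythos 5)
Your proposal is correct and follows essentially the same route as the paper: parts (a)--(c) and (e) from the definitions and monotonicity, part (d) by induction on the word length using Lemma~\ref{x76} together with $\psi(\mathbf{s})\geq\mathbf{s}$ and pgf monotonicity, and part (f) via the column-sum estimate with the uniform bound $u$ on positive entries of $\mathbf{A}_\theta$. The only differences are cosmetic: you peel off the first letter instead of the last in the induction and phrase (f) as a contrapositive rather than a contradiction.
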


\begin{proof}
	\ref{z88} follows from the definition of $\mathbf{h}_{\theta}$, and \ref{z90} immediately follows from \ref{z88} combined with the fact that $\mathbf{1}\in B_{\delta}$. Part \ref{x68} is inherited from the monotonicity properties (see Lemma \ref{x38}) of $\psi$ and $\mathbf{g}_{\theta}$. \ref{x65} follows from \ref{z89}, since $\mathbf{f}_{\theta}(\mathbf{s})\geq 0$.

	\ref{z89} We use induction on $n$. First if $n=1$, then for $\mathbf{s}\in B_{\delta}$, $\mathbf{h}_{\theta}(\mathbf{s})=\mathbf{g}_{\theta}(\mathbf{s})\geq \mathbf{f}_{\theta}(\mathbf{s})$ by the definition of $\mathbf{h}_{\theta}$ and $\delta$. For $\mathbf{s}\notin B_{\delta}$, $\mathbf{h}_{\theta}(\mathbf{s})=\mathbf{g}_{\theta}(\psi(\mathbf{s}))$ on the one hand $\mathbf{g}_{\theta}(\psi(\mathbf{s})) \geq \mathbf{f}_{\theta}(\psi(\mathbf{s})) \geq \mathbf{f}_{\theta}(\mathbf{s})$ and on the other hand $\mathbf{g}_{\theta}(\psi(\mathbf{s})) \geq \mathbf{g}_{\theta}(\mathbf{s})$. Here we used that $\mathbf{g}_{\theta}$ and $\mathbf{f}_{\theta}$ are monotone increasing and that $\psi(\mathbf{s})\geq \mathbf{s}$. Now assume that $\pmb{\theta}=(\theta_1, \dots, \theta_n)\in\mathcal{I}^n$ and we know that the assumption holds for $\pmb{\theta}^-=(\theta_1, \dots, \theta_{n-1})$, then from the hypothesis and the monotonicity of $\mathbf{f}_{\theta}$, 
	$\mathbf{h}_{\pmb{\theta}}(\mathbf{s})=\mathbf{h}_{\pmb{\theta}^-}(\mathbf{h}_{\theta_n}(\mathbf{s})) \geq \mathbf{f}_{\pmb{\theta}^-}(\mathbf{h}_{\theta_n}(\mathbf{s})) \geq \mathbf{f}_{\pmb{\theta}^-}(\mathbf{f}_{\theta_n}(\mathbf{s}))=\mathbf{f}_{\pmb{\theta}}(\mathbf{s})$.

	\ref{z87} Assume $\mathbf{s}\notin B_{\delta}$, then $\|\mathbf{1}-\mathbf{s}\|_{\infty}>\delta$, i.e. there exist a $j^*\in[N]$ such that $1-s_{j^{*}}>\delta$, by the definition of $\psi$, $(\psi(\mathbf{s}))_{j^{*}}=1-\delta$. Since $\mathbf{A}_{\theta}$ is allowable, there exists a $k^{*}$ such that $\mathbf{A}_{\theta}(k^*, j^*)>0$, in particular $\mathbf{A}_{\theta}(k^*, j^*)\geq u$. Fix an arbitrary $v>N-u\delta$. It follows, that

	\begin{align*}
		v & \leq \|\mathbf{h}_{\theta}(\mathbf{s})\|=\sum_{k\in[N]} h_{\theta}^{(k)}(\mathbf{s})=\sum_{k\in[N]} g_{\theta}^{(k)}(\psi(\mathbf{s}))= \sum_{k\in[N]} 1- \sum_{j\in \mathfrak{W}^{\theta ,k}} \mathbf{A}_{\theta}(k, j)(1-\psi(\mathbf{s})_j)\\
		& \leq N- \mathbf{A}_{\theta}(k^{*}, j^{*})(1-(\psi(\mathbf{s}))_{j^{*}})\leq N-u\delta<v,
	\end{align*}
	which is contradiction.
\end{proof}

\subsection{Proof of Theorem  \ref{z68}, Part (1)}
Now we are ready to prove the first part of our main theorem.
\begin{proof}[Proof of Theorem \ref{z68}, Part (1)]

	From Lemma \ref{x77} it follows that there exists a set $H\subset \Sigma$, with $\nu(H)=1$, such that for every $\overline{\pmb{\theta}}$ there exists a $\gamma>1$ and an $\widetilde{N}=\widetilde{N}(\pmb{\theta} )$ such that for $n>\widetilde{N}$
	\begin{equation}\label{z59}
		(\mathbf{A}_{\overline{\pmb{\theta}}|_n})_*\geq \gamma^n.
	\end{equation}
	We fix such a $\gamma>1$ and $\widetilde{N}$.

	In what follows we will show that 
	\begin{equation}\label{x63}
	\text{for all }\overline{\pmb{\theta}}\in H, \quad 
	\|\mathbf{q}(\overline{\pmb{\theta}})\|=\lim_{n\to \infty}\|\mathbf{f}_{\overline{\pmb{\theta}}|_n}(\mathbf{0})\|<N, 
    \end{equation}

proving that $\mathbf{q}(\overline{\pmb{\theta}})\neq \mathbf{1}$ for $\nu$ almost every $\overline{\pmb{\theta}}$.

	\begin{figure}
		\centering
		\begin{subfigure}[b]{0.30\textwidth}
			\centering
			\includegraphics[width=\linewidth]{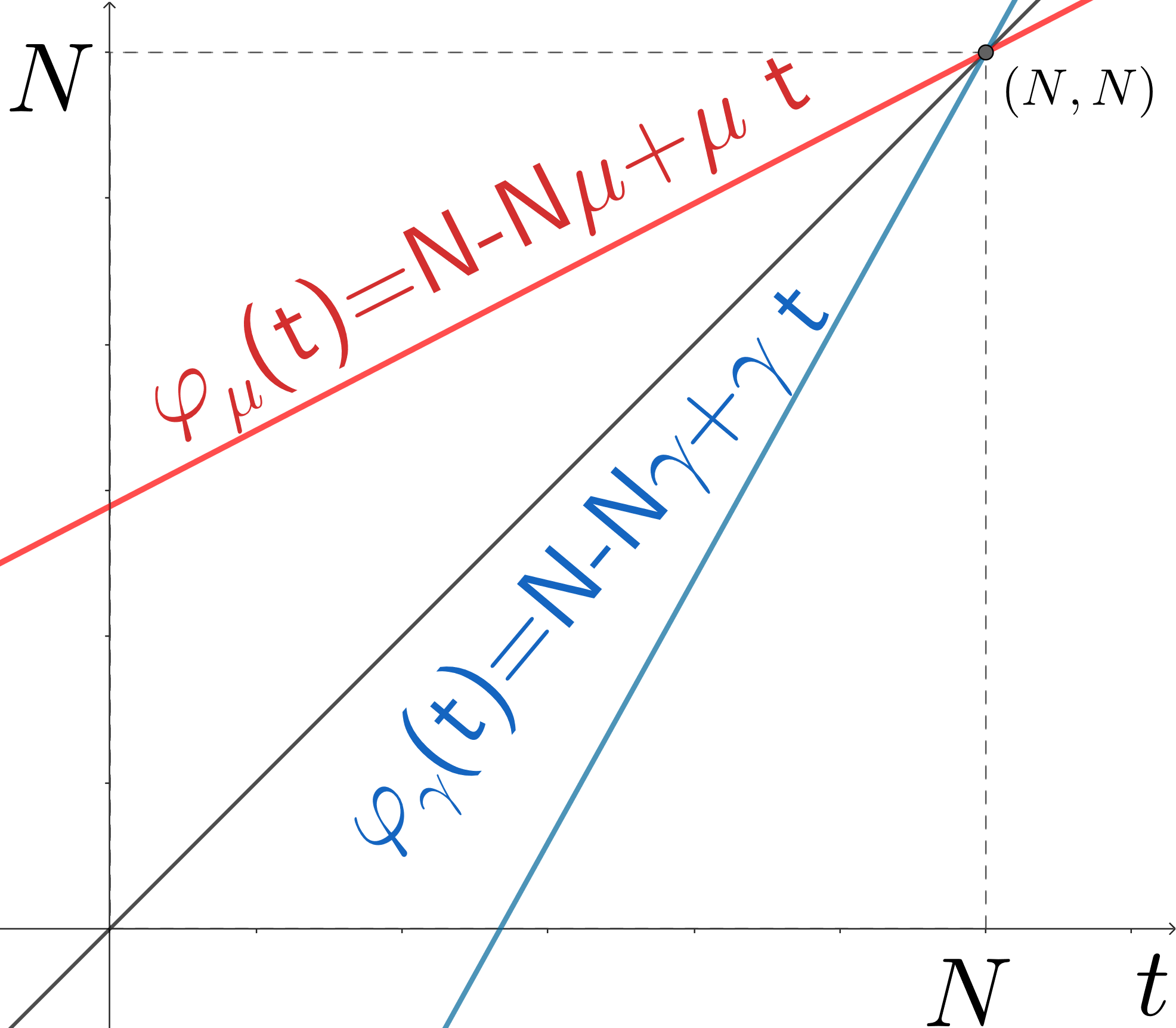}
			\caption{}\label{x61}
		\end{subfigure}
		\begin{subfigure}[b]{0.30\textwidth}
			\includegraphics[width=\linewidth]{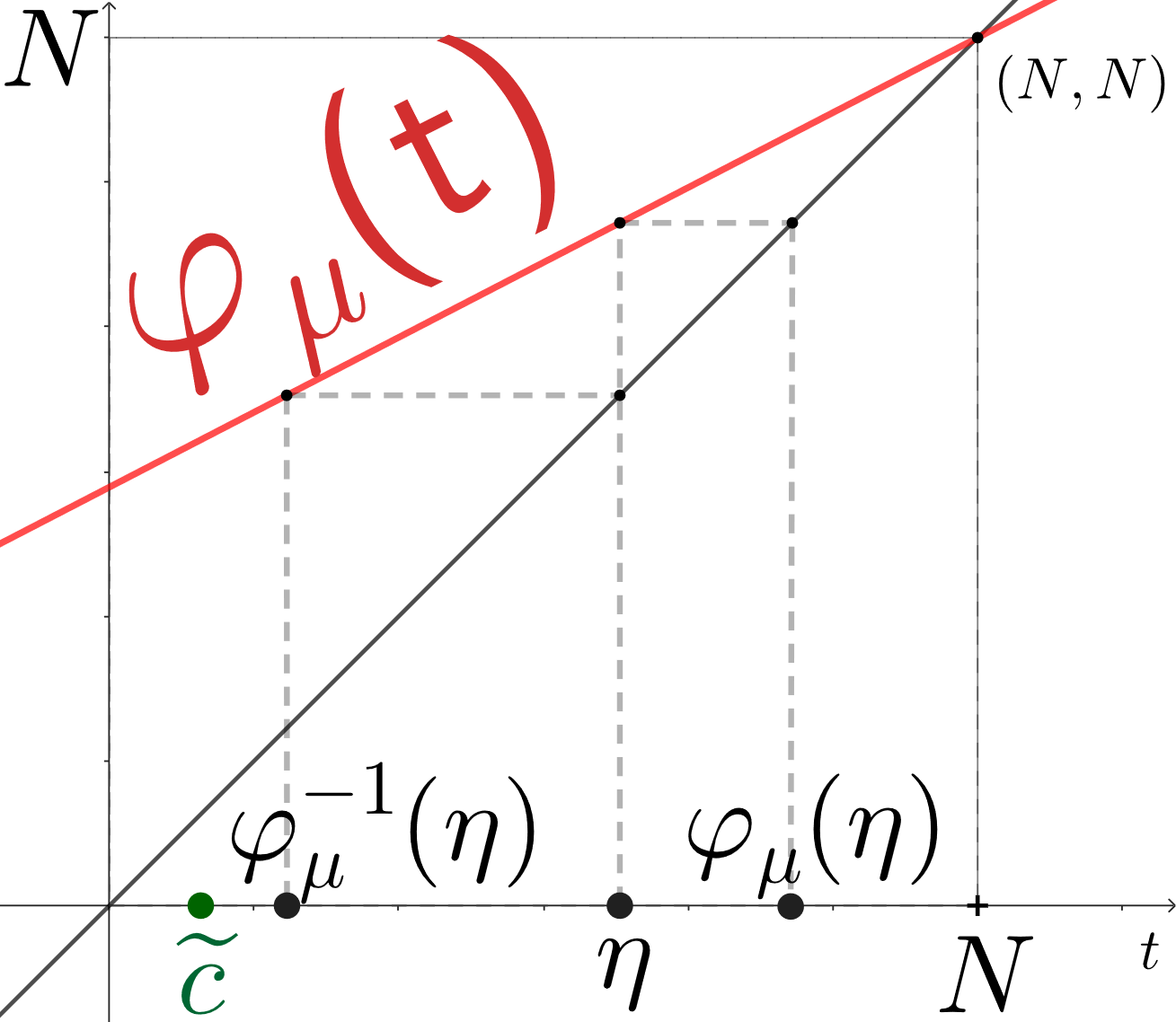}
			\caption{}\label{x60}
		\end{subfigure}
		\begin{subfigure}[b]{0.33\textwidth}
			\centering
			\includegraphics[width=\linewidth]{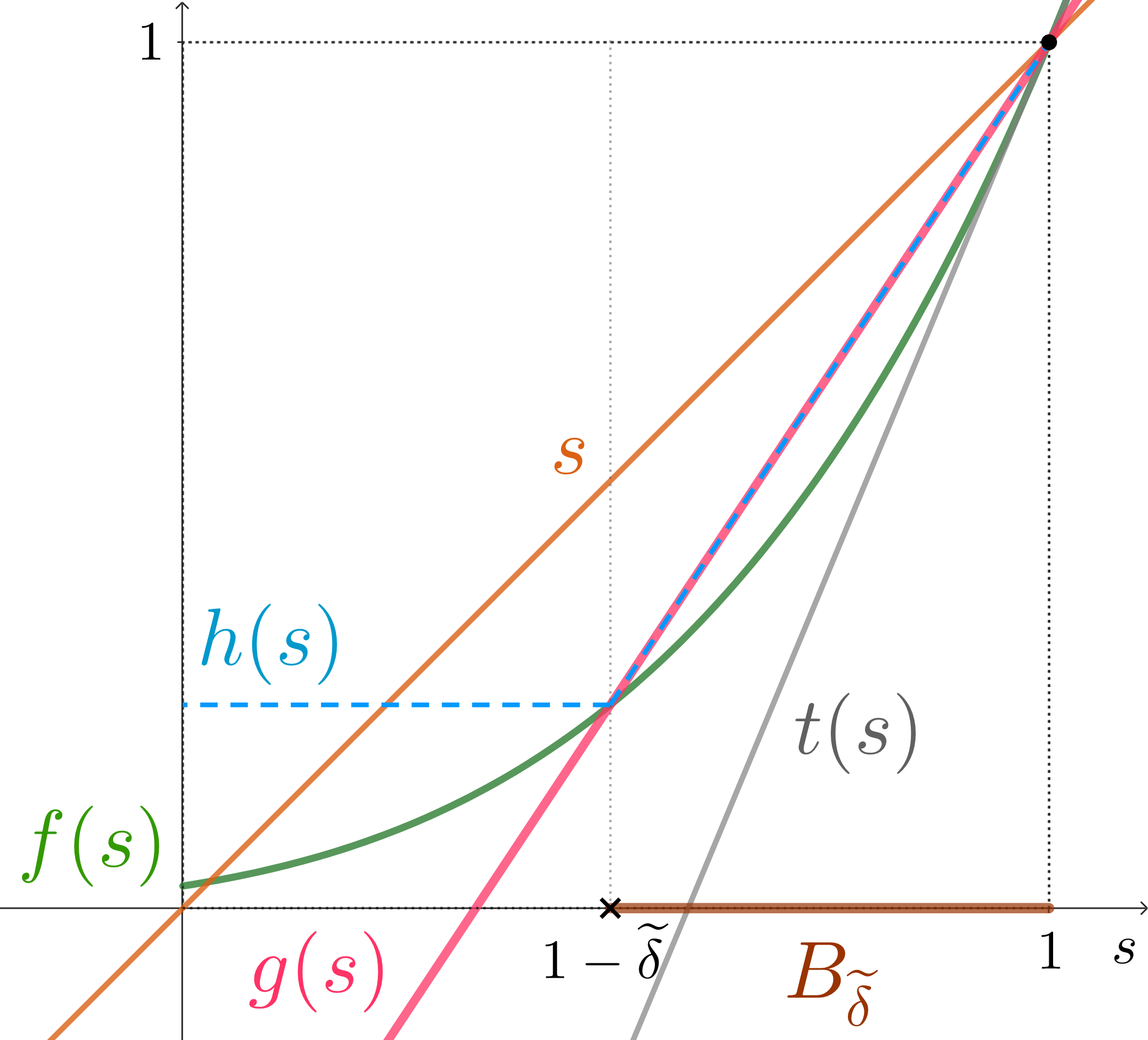}
			\caption{}\label{x64}
		\end{subfigure}
		\caption{}
	\end{figure}

	 Instead of inspecting the behaviour of $\mathbf{f}_{\overline{\pmb{\theta}}|_n}(\mathbf{0})$ directly, we consider $\mathbf{h}_{\overline{\pmb{\theta}}|_n}(\mathbf{0})\geq \mathbf{f}_{\overline{\pmb{\theta}}|_n}(\mathbf{0})$ for $n>\widetilde{N}$.
	By the uniform allowability condition we can choose $0<\mu<1$ such that
	\begin{equation}
		\mu \leq \min\limits_{k\in[N], \theta\in \mathcal{I}}
		\| \mathbf{c_{k}}(\mathbf{A}_\theta)\|.
	\end{equation}

	For the visual explanation of the following part see Figure \ref{x61}, \ref{x60}. For $v, t \in \mathbb{R}$ define
	\begin{equation}\label{x88}
		\varphi_v (t):=N-N v+v t,
	\end{equation}

	Choose $\eta<N$ be such that
	\begin{equation}
		\label{w83}
		\varphi^{-1}_\mu (\eta)>\widetilde{c}:= N-u\delta,
	\end{equation}
	where the value of 
	$\widetilde{c}$ occurred in Lemma \ref{z38} part \ref{z87}. Since $\mu<1$ there exists an $\varepsilon>0$, such that
	\begin{equation}\label{q96}
		\varphi_{\mu}(\eta)<
		\varphi_{\mu}^{\widetilde{N}}(\eta)=:
		N-\varepsilon.
	\end{equation}
	Now we fix an $m>\widetilde{N}$. Then there are two cases,
	\begin{enumerate}
		\item either $\|\mathbf{h}_{\overline{\pmb {\theta}}|_m}(\mathbf{0})\|\leq \varphi_{\mu}({\eta})$, or \label{q98}
		\item $\|\mathbf{h}_{\overline{\pmb{\theta}}|_m}(\mathbf{0})\|>\varphi_{\mu}(\eta)$. \label{q97}
	\end{enumerate}
	In case (\ref{q98}), since for all $k\in [N]$ we have $f_{\theta}^{(k)}(\mathbf{s})\leq h^{(k)}_{\theta}(\mathbf{s})$ for any $\mathbf{s}\in [0, 1]^N$ (by part \ref{z89} of Fact \ref{z38})
	and $\varphi_{\mu}(\eta)< N-\varepsilon$  (see \eqref{q96}) we get that
	\begin{equation}\label{x96}
		\|\mathbf{q}_m(\overline{\pmb{\theta}})\|=\|\mathbf{f}_{\overline{\pmb{\theta}}|_m}(\mathbf{0})\|\leq
		\varphi_{\mu} (\eta )
		<N-\varepsilon.
	\end{equation}

	In the rest of this section we consider case (\ref{q97}), namely when
	\begin{equation}\label{z26}
		\|\mathbf{h}_{\overline{\pmb{\theta}}|_m}(\mathbf{0})\| >\varphi_{\mu}(\eta).
	\end{equation}
	Set $\pmb{\theta}:=\overline{\pmb{\theta}}|_m$. In the rest of this section we always assume that $\mathbf{s}\in \left[0,1\right]^N$.
	\begin{lemma}\label{z78}
		\begin{enumerate}[label=(\roman*)]
			\item For all $\theta\in\mathcal{I},k\in[N]$, if $g^{(k)}_{\theta}(\mathbf{s})\geq 0$, then
			      \begin{equation*}
				      \|\mathbf{g}_{\theta}(\mathbf{s})\| \leq \varphi_{\mu}(\|s\|).
			      \end{equation*}
			\item For $n>\widetilde{N}$ and $\widehat{\pmb{\theta}}=(\theta_1, \dots, \theta_n)\in \mathcal{I}^n$, if for all $k\in [N]$, $g^{(k)}_{\widehat{\pmb{\theta}}}(\mathbf{s})\geq 0$, then
			      \begin{equation*}
				      \|\mathbf{g}_{\widehat{\pmb{\theta}}}(\mathbf{s})\|\leq \varphi_{\gamma}(\|s\|).
			      \end{equation*}
		\end{enumerate}
	\end{lemma}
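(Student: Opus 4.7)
The plan is a direct linear-algebra computation exploiting the affine structure of $\mathbf{g}_\theta$, namely $\mathbf{g}_\theta(\mathbf{s})=\mathbf{1}-\mathbf{A}_\theta(\mathbf{1}-\mathbf{s})$, together with the column-sum lower bounds on $\mathbf{A}_\theta$ and on the products $\mathbf{A}_{\widehat{\pmb{\theta}}}$.

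For part (i), the hypothesis $g^{(k)}_\theta(\mathbf{s})\geq 0$ lets me expand $\|\mathbf{g}_\theta(\mathbf{s})\|$ as the sum of its coordinates:
\begin{equation*}
\|\mathbf{g}_\theta(\mathbf{s})\|=\sum_{k\in[N]}\Bigl(1-\sum_{j\in[N]}\mathbf{A}_\theta(k,j)(1-s_j)\Bigr)=N-\sum_{j\in[N]}(1-s_j)\,\|\mathbf{c}_j(\mathbf{A}_\theta)\|.
\end{equation*}
Since $1-s_j\geq 0$ for each $j$ and $\|\mathbf{c}_j(\mathbf{A}_\theta)\|\geq\mu$ by the definition of $\mu$, I obtain
\begin{equation*}
\|\mathbf{g}_\theta(\mathbf{s})\|\leq N-\mu\sum_{j\in[N]}(1-s_j)=N-\mu(N-\|\mathbf{s}\|)=\varphi_\mu(\|\mathbf{s}\|),
\end{equation*}
which is precisely the claimed bound.

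For part (ii), I first observe that iterating \eqref{x55} gives $\mathbf{g}_{\widehat{\pmb{\theta}}}(\mathbf{s})=\mathbf{1}-\mathbf{A}_{\widehat{\pmb{\theta}}}(\mathbf{1}-\mathbf{s})$ (a short induction using $\mathbf{g}_\theta\circ(\mathbf{1}-\mathbf{A}_\bullet(\mathbf{1}-\cdot))=\mathbf{1}-\mathbf{A}_\theta\mathbf{A}_\bullet(\mathbf{1}-\cdot)$). The same computation as in (i), now valid because all coordinates of $\mathbf{g}_{\widehat{\pmb{\theta}}}(\mathbf{s})$ are non-negative, yields
\begin{equation*}
\|\mathbf{g}_{\widehat{\pmb{\theta}}}(\mathbf{s})\|=N-\sum_{j\in[N]}(1-s_j)\,\|\mathbf{c}_j(\mathbf{A}_{\widehat{\pmb{\theta}}})\|\leq N-(\mathbf{A}_{\widehat{\pmb{\theta}}})_{*}(N-\|\mathbf{s}\|).
\end{equation*}
Reading $\widehat{\pmb{\theta}}$ as the initial length-$n$ segment of the fixed $\overline{\pmb{\theta}}\in H$ from the proof of Theorem \ref{z68}, the bound \eqref{z59} gives $(\mathbf{A}_{\widehat{\pmb{\theta}}})_{*}\geq\gamma^{n}\geq\gamma$ for $n>\widetilde{N}$. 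Together with $N-\|\mathbf{s}\|\geq 0$ this implies $\|\mathbf{g}_{\widehat{\pmb{\theta}}}(\mathbf{s})\|\leq N-\gamma(N-\|\mathbf{s}\|)=\varphi_{\gamma}(\|\mathbf{s}\|)$.

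There is no real obstacle here beyond bookkeeping; the only subtle points are (a) that the hypothesis $g^{(k)}\geq 0$ is precisely what is needed so that the vector norm $\|\cdot\|$ coincides with the coordinate sum in the above expansion, and (b) the implicit reading of $\widehat{\pmb{\theta}}$ in (ii) as an initial segment of the environment $\overline{\pmb{\theta}}$ fixed in the proof of Theorem \ref{z68}, so that \eqref{z59} is applicable to deliver the strictly-super-exponential lower bound on the minimal column sum.
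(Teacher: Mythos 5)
Your proof is correct and follows essentially the same route as the paper: expand $\|\mathbf{g}_{\theta}(\mathbf{s})\|$ coordinate-wise (using the non-negativity hypothesis so the $1$-norm equals the coordinate sum), bound the column sums below by $\mu$ (resp.\ by $(\mathbf{A}_{\widehat{\pmb{\theta}}})_{*}\geq\gamma^{n}>\gamma$ via \eqref{z59} for part (ii)), and conclude with $\varphi_{\mu}$, resp.\ $\varphi_{\gamma}$. The paper merely states that part (ii) is the same computation, exactly as you carried it out, so there is nothing to add.
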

	\begin{proof}
		We only present the proof of the first statement since the second one can be proven using the same steps and the fact (see \eqref{z59}) that $(\mathbf{A}_{\widehat{\pmb{\theta}}})_*\geq \gamma^n>\gamma$.

		\begin{align*}
			\|\mathbf{g}_{\theta}(\mathbf{s})\|
			 & =\|\mathbf{1}-\mathbf{A}_{\theta}(1-\mathbf{s})\|
			=\sum_{k=0}^{N-1}(1-\sum_{\ell=0}^{N-1} \mathbf{A}_{\theta}(k, \ell)(1-s_{\ell}))                                                                                               \\
			 &
			=N-\sum_{\ell=0}^{N-1}\sum_{k=0}^{N-1} \mathbf{A}_{\theta}(k, \ell)(1-s_{\ell})                   =  N-\sum_{\ell=0}^{N-1} \|\mathbf{c}_{\ell}(\mathbf{A}_\theta)\|(1-s_{\ell}) \\
			 &
			\leq N-(\mathbf{A}_{\theta})_*N+(\mathbf{A}_{\theta})_*\|\mathbf{s}\|
			=N-(\mathbf{A}_{\theta})_*(N-\|\mathbf{s}\|)                                                                                                                                    \\
			 & \leq N-\mu(N-\|\mathbf{s}\|)=\varphi_\mu (\|\mathbf{s}\|).
		\end{align*}
	\end{proof}
	Now we continue the proof of
	the first part of Theorem \ref{z68}.
	Define
	\begin{equation}
		T:=\left\{ p\leq m: \:\forall k\leq p,\; \mathbf{h}_{\pmb{\theta}_{k}^m}(\mathbf{0})\in R^{C}(\eta)\right\},
	\end{equation}
	where $\pmb{\theta}_{k}^m=(\theta _k,\dots ,\theta _m)$ and $m$ was fixed earlier in the proof.
	By our assumption \eqref{z26} we get that
	$\mathbf{h}_{\pmb{\theta}}(\mathbf{0})\in R^C(\varphi_\mu (\eta))\subset R^{C}(\eta)$ it follows, that $1\in T$. On the other hand, Fact \ref{z38} \ref{z87} and
	\eqref{w83} together imply
	that $m\notin T$. Namely,
	$\mathbf{h}_{\theta _m}(\mathbf{0})
		\leq
		\widetilde{c}<\varphi _{\mu }^{-1 }(\eta )<\eta $. That is $\mathbf{h}_{\theta _m}(\mathbf{0})\in R(\eta )$. So, $m\notin T$.
	Then
	\begin{align}
		1\leq Q:=\max T \leq m-1.
	\end{align}
	Let $\mathbf{v}:=\mathbf{h}_{\pmb{\theta}_{Q+1}^m}(\mathbf{0})$.
	By the definition of $Q$,
	\begin{equation}
		\label{w81}
		\mathbf{v}\in R(\eta)
	\end{equation}
	Also, for any $k\leq Q$, $\mathbf{h}_{\theta_k}(\mathbf{h}_{\pmb{\theta}_{k+1}^m}(\mathbf{0}))\in R^{C}(\eta)$, hence by $\eta > \widetilde{c}$ and Fact \ref{z38} \ref{z87} it follows that for any $k\leq Q$
	\begin{equation*}
		\mathbf{h}_{\theta_k}(\mathbf{h}_{\pmb{\theta}_{k+1}^m}(\mathbf{0}))=
		\mathbf{g}_{\theta_k}(\mathbf{h}_{\pmb{\theta}_{k+1}^m}(\mathbf{0})).
	\end{equation*}
	By repeated applications of this we get that for any $k\leq Q$,
	\begin{equation}\label{w82}
		\mathbf{h}_{\pmb{\theta}_k^Q}(\mathbf{v})=\mathbf{g}_{\pmb{\theta}_k^Q}(\mathbf{v}).
	\end{equation}
	Now we show that $Q$ can not be too big. Namely, to get a contradiction
	assume that $Q>\widetilde{N}$. Using \eqref{z26}, \eqref{w82},
	the second part of Lemma
	\ref{z78} together with the assumption $Q>\widetilde{N}$, the fact that $\gamma >1$, and \eqref{w81}, in this order, we obtain that
	\begin{equation*}
		\varphi_{\mu}(\eta)<\|\mathbf{h}_{\pmb{\theta}_{1}^{m}}(\mathbf{0})\|=\|\mathbf{g}_{\pmb{\theta}_{1}^{Q}}(\mathbf{v})\| \leq \varphi_{\gamma} (\|\mathbf{v}\|)\leq \|\mathbf{v}\|<\eta,
	\end{equation*}
	which is a contradiction since $\eta< \varphi_{\mu}(\eta)$.

	Hence, $Q\leq \widetilde{N}$. However, in this case using the first part of Lemma \ref{z78} $\widetilde{N}$-times, we get that
	\begin{multline*}
		\|\mathbf{h}_{\pmb{\theta}}(\mathbf{0})\|=\|\mathbf{g}_{\pmb{\theta}_{1}^{Q}}(\mathbf{v})\| \leq \varphi_{\mu}(\|\mathbf{g}_{\pmb{\theta}_{2}^{Q}}(\mathbf{v})\|) \leq \dots \leq \varphi_{\mu}^{Q-1}(\|
		{g}_{\pmb{\theta}_{Q}^{Q}}
		\mathbf{v}\|)
		\\
		\leq \varphi_{\mu}^{Q}(\|
		\mathbf{v}\|)
		\leq \varphi_{\mu}^{\widetilde{N}}(\eta)=N-\varepsilon,
	\end{multline*}
	where $\varepsilon>0$ was defined in \eqref{q96}.
	This means that (similarly to \eqref{x96}),
	\begin{equation*}
		\|\mathbf{q}_m(\overline{\pmb{\theta}})\|=\|\mathbf{f}_{\pmb{\theta}|_m}(\mathbf{0})\|
		\leq \|\mathbf{h}_{\pmb{\theta}}(\mathbf{0})\| \leq N-\varepsilon.
	\end{equation*}

	This finishes the treatment of case \ref{q97}. It follows that for $m >\widetilde{N}$ we have
	$\|\mathbf{q}_m(\overline{\pmb{\theta}})\| \leq N-\varepsilon$. In this way we have verified that
	\eqref{x63} holds, which completes the proof of the first part of Theorem \ref{z68}.
\end{proof}

\subsection{The proof of Theorem \ref{z68}, Part (2)}
It follows from the
assumption \ref{z16} of Theorem \ref{z68}
that there exists a finite word
$\widetilde{\pmb{\theta }}=(\widetilde{\theta} _1,\dots ,\widetilde{\theta} _p)\in \mathcal{I}^p$ such that $\nu([\widetilde{\pmb{\theta }}])>0$ and all elements of
$\mathbf{M}_{\widetilde{\pmb{\theta }}}:=
	\mathbf{M}_{\widetilde{\theta}_1}\cdots\mathbf{M}_{\widetilde{\theta}_n}$ are strictly positive. From the ergodicity of $\nu$ (Principal Assumption
\ref{z99}) it follows, that there exists $\widetilde{\Sigma}\subset \Sigma$ with $\nu(\widetilde{\Sigma})=1$, such that all $\overline{\pmb{\theta}}\in\widetilde{\Sigma}$ contains $\widetilde{\pmb{\theta}}$ as a subword.

For a $k_1\in[N]$ we define
\begin{equation}\label{x89}
	\text{\bf Bad}_{k_1}:=
	\left\{
	\overline{\pmb{\theta}}\in \widetilde{\Sigma }
	:
	\lim\limits_{n\to\infty}
	f _{\overline{\pmb{\theta}}|_n }^{(k_1) }(\mathbf{0})=1
	\right\}.
\end{equation}

\begin{lemma}\label{x10}
	For any $k_1\in[N]$, 		$\nu (\text{\bf Bad}_{k_1})=0$.
\end{lemma}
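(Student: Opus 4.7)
The plan is to show that under the standing hypothesis of part (2), namely $\nu(\{\mathbf{q}(\overline{\pmb{\theta}}) = \mathbf{1}\}) = 0$, the set $\text{\bf Bad}_{k_1}$ is $\nu$-a.s.\ contained in a countable union of shift-preimages of $\{\mathbf{q} = \mathbf{1}\}$; invariance of $\nu$ and countable subadditivity then force $\nu(\text{\bf Bad}_{k_1}) = 0$. The engine is that $\mathbf{M}_{\widetilde{\pmb{\theta}}}$ has all entries strictly positive, which lets us upgrade an ``extinction-one'' statement at a single type to an extinction-one statement at every type after shifting past an occurrence of the word $\widetilde{\pmb{\theta}}$.

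First I would establish the propagation step: for every $\overline{\pmb{\theta}}\in\Sigma$, every $n\geq 1$, and every $k_1\in[N]$, if $q^{(k_1)}(\overline{\pmb{\theta}}) = 1$ then $q^{(k)}(\sigma^n\overline{\pmb{\theta}}) = 1$ for every $k$ with $(\mathbf{M}_{\overline{\pmb{\theta}}|_n})(k_1,k) > 0$. The argument uses the identity $\mathbf{q}(\overline{\pmb{\theta}}) = \mathbf{f}_{\overline{\pmb{\theta}}|_n}(\mathbf{q}(\sigma^n\overline{\pmb{\theta}}))$ (which follows from the composition $\mathbf{f}_{\overline{\pmb{\theta}}|_{n+m}} = \mathbf{f}_{\overline{\pmb{\theta}}|_n}\circ\mathbf{f}_{\sigma^n\overline{\pmb{\theta}}|_m}$ together with \eqref{x81} and \eqref{x80}), and then the fact that $f^{(k_1)}_{\overline{\pmb{\theta}}|_n}(\mathbf{s}) = \sum_{\mathbf{z}} f^{(k_1)}_{\overline{\pmb{\theta}}|_n}[\mathbf{z}]\mathbf{s}^{\mathbf{z}}$ is a convex combination of quantities each at most $1$ on $[0,1]^N$. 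Evaluating at $\mathbf{q}(\sigma^n\overline{\pmb{\theta}})$ and getting $1$ therefore forces $\mathbf{q}(\sigma^n\overline{\pmb{\theta}})^{\mathbf{z}} = 1$ for every $\mathbf{z}$ with $f^{(k_1)}_{\overline{\pmb{\theta}}|_n}[\mathbf{z}] > 0$, hence $q^{(k)}(\sigma^n\overline{\pmb{\theta}}) = 1$ for every coordinate $k$ appearing with positive probability among the offspring vectors, i.e.\ exactly on $\{k:(\mathbf{M}_{\overline{\pmb{\theta}}|_n})(k_1,k) > 0\}$.

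Next, for any $\overline{\pmb{\theta}}\in\text{\bf Bad}_{k_1}$, I would choose $n$ with $(\theta_{n+1},\dots,\theta_{n+p}) = \widetilde{\pmb{\theta}}$ and use the factorisation $\mathbf{M}_{\overline{\pmb{\theta}}|_{n+p}} = \mathbf{M}_{\overline{\pmb{\theta}}|_n}\,\mathbf{M}_{\widetilde{\pmb{\theta}}}$. Since products of allowable matrices are allowable (a short row/column verification), the $k_1$-th row of $\mathbf{M}_{\overline{\pmb{\theta}}|_n}$ has at least one strictly positive entry; combined with the strict positivity of every entry of $\mathbf{M}_{\widetilde{\pmb{\theta}}}$, this makes the entire $k_1$-th row of $\mathbf{M}_{\overline{\pmb{\theta}}|_{n+p}}$ strictly positive. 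Applying the propagation step with exponent $n+p$ then yields $\mathbf{q}(\sigma^{n+p}\overline{\pmb{\theta}}) = \mathbf{1}$, i.e.\ $\sigma^{n+p}\overline{\pmb{\theta}}\in\{\mathbf{q} = \mathbf{1}\}$.

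Finally, since $\text{\bf Bad}_{k_1}\subseteq\widetilde{\Sigma}$ by definition and ergodicity with $\nu([\widetilde{\pmb{\theta}}]) > 0$ guarantees that such an $n$ exists for $\nu$-a.e.\ $\overline{\pmb{\theta}}\in\widetilde{\Sigma}$, we obtain $\text{\bf Bad}_{k_1}\subseteq\bigcup_{m\geq p}\sigma^{-m}\{\mathbf{q} = \mathbf{1}\}$ up to a $\nu$-null set. The hypothesis of part (2) gives $\nu(\{\mathbf{q} = \mathbf{1}\}) = 0$, and shift-invariance together with countable subadditivity conclude. The step that needs most care is the propagation lemma: one has to pass from the scalar pgf equality $f^{(k_1)}_{\overline{\pmb{\theta}}|_n}(\mathbf{q}(\sigma^n\overline{\pmb{\theta}})) = 1$, at a point possibly strictly less than $\mathbf{1}$ in some coordinates, to the coordinate-wise conclusion on $\mathbf{q}(\sigma^n\overline{\pmb{\theta}})$; once that translation is made precise, the remainder is standard bookkeeping with the ergodic theorem.
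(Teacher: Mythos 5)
Your proof is correct, but it follows a genuinely different route from the paper. The paper's argument is quantitative: it decomposes $\widetilde{\Sigma}$ according to the first occurrence of $\widetilde{\pmb{\theta}}$ (the sets $A_n^C$), proves the pointwise estimate of Lemma \ref{x34} (if some coordinate charged by $f^{(k)}_\theta$ is below $1-\widetilde{\delta}$, then $f^{(k)}_\theta(\mathbf{s})<1-p_*\widetilde{\delta}$ with $p_*=\alpha/2$ from uniform allowability), and propagates ``$\delta$-closeness to $\mathbf{1}$'' backwards through the first $n$ letters at a cost $p_*^{-1}$ per step, landing in the small-measure set $X_t$ after shifting; this needs the uniform constant $\alpha$ and an $\varepsilon$--$\delta$ bookkeeping with finite-level quantities $\mathbf{f}_{\overline{\pmb{\theta}}|_K}(\mathbf{0})$. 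You instead work with exact identities for the limit: from $\mathbf{q}(\overline{\pmb{\theta}})=\mathbf{f}_{\overline{\pmb{\theta}}|_n}(\mathbf{q}(\sigma^n\overline{\pmb{\theta}}))$ and the rigidity of a pgf at the value $1$ (a convex combination of terms $\leq 1$ equals $1$ only if every charged term equals $1$), you get the clean propagation ``$q^{(k_1)}(\overline{\pmb{\theta}})=1$ and $\mathbf{M}_{\overline{\pmb{\theta}}|_m}(k_1,k)>0$ imply $q^{(k)}(\sigma^m\overline{\pmb{\theta}})=1$''; choosing $m$ just past an occurrence of $\widetilde{\pmb{\theta}}$ makes the $k_1$-th row of $\mathbf{M}_{\overline{\pmb{\theta}}|_m}$ strictly positive (allowability of products plus positivity of $\mathbf{M}_{\widetilde{\pmb{\theta}}}$), so $\text{\bf Bad}_{k_1}\subset\bigcup_{m\geq p}\sigma^{-m}\{\mathbf{q}=\mathbf{1}\}$, and shift-invariance plus the hypothesis $\nu(\{\mathbf{q}=\mathbf{1}\})=0$ of part (2) finish by countable subadditivity. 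Your version is shorter, avoids the uniform allowability constant and Lemma \ref{x34} altogether, and isolates the only measure-theoretic input ($\nu(\{\mathbf{q}=\mathbf{1}\})=0$, which both proofs need --- the paper obtains it from part (1) in the $\lambda>0$ case, while for the standalone implication of part (2) it is exactly the hypothesis); the two small points to spell out are the continuity of pgfs on $[0,1]^N$ used to pass to the limit in the composition identity, and the equivalence ``$\mathbf{M}_{\overline{\pmb{\theta}}|_m}(k_1,k)>0$ iff some offspring vector charged by $f^{(k_1)}_{\overline{\pmb{\theta}}|_m}$ has a positive $k$-th coordinate'', which uses that the mean matrix of the $m$-step distribution is the product of the one-step expectation matrices. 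The paper's quantitative route, by contrast, never manipulates the limit $\mathbf{q}$ directly and yields explicit finite-level bounds, but at the price of heavier notation.
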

\begin{proof}
	Fix an $n \geq p$. Let
	\begin{equation*}
		A_n:=\left\{\overline{\pmb{\theta}}\in \widetilde{\Sigma} :
		\overline{\pmb{\theta}}|_n
		\text{ does \underline{not} contain the word $\widetilde{\pmb{\theta }}$ }
		\right\},\text{ then }\bigcup_{n=p}^{\infty}A_n^C=\widetilde{\Sigma}.
	\end{equation*}

	Hence, it is enough to prove that for any $n\geq p$ and any $\varepsilon$
	\begin{equation}
		\nu(\text{\bf Bad}_{k_1}\cap A_n^C)<\varepsilon.
	\end{equation}
	Let $n$ and $\varepsilon >0$ be arbitrary.
	Recall that $R^{C}(t):=\{ \mathbf{s} \in [0,1]^{N}:\|\mathbf{s}\|\geq t\}$.
	By the first part of Theorem \ref{z68}, we can choose a $\delta >0$ which is so small that for
	\begin{equation}
		\label{x06}
		t:=
		N(1-\delta p_*^{-n}),\text{ and }
		X_t:=
		\left\{\overline{\pmb{\theta}}:\lim\limits_{\ell \to\infty} f_{\overline{\pmb{\theta}}|_{\ell }}(\mathbf{0})\in R^C(t)\right\},
	\end{equation}
	we have
	\begin{equation*}
		\nu (X_t)<\varepsilon,
	\end{equation*}
	where $p_{*}=\frac{\alpha}{2}$ (recall that $\alpha$ was defined in \eqref{z15}).\begin{lemma}\label{x34}
		Let $\theta \in \mathcal{I}$, $k\in[N]$
		and $0<\widetilde{\delta}<1$.
		Then for any $\mathbf{s}\in[0,1]^N$
		\begin{equation*}
			\text{if there exists an } i\in \mathfrak{W}^{\theta, k} \text{ such that }s_i<1-\widetilde{\delta}
			\Longrightarrow
			f_\theta ^{(k)}(\mathbf{s})<1-p_{*}\widetilde{\delta}.
		\end{equation*}
	\end{lemma}
	\begin{proof}
		Fix an $\mathbf{s}\in[0,1]^N$ such that $s_i<1-\widetilde{\delta}$
		\begin{align*}
			  & f _{\theta }^{(k) }(\mathbf{s})
			\leq f _{\theta }^{(k) }(\mathbf{1}-\mathbf{e}_i \widetilde{\delta})
			= \sum_{\mathbf{ z}\in \mathbb{N} _{0}^{N }}
			f _{\theta }^{(k) }[\mathbf{z}](1-\widetilde{\delta})^{z_{i}}
			\leq \sum_{\substack{\mathbf{ z}\in \mathbb{N} _{0}^{N }                     \\ z_i=0}}f _{\theta }^{(k) }[\mathbf{z}]\\
			+ & \sum_{\substack{\mathbf{ z}\in \mathbb{N} _{0}^{N }                      \\z_i\neq 0}}
			f _{\theta }^{(k) }[\mathbf{z}](1-\widetilde{\delta})
			\leq 1-\widetilde{\delta}\sum_{\substack{\mathbf{ z}\in \mathbb{N} _{0}^{N } \\z_i\neq 0}}f_{\theta }^{(k) }[\mathbf{z}]
			< 1-p_{*}\widetilde{\delta}
		\end{align*}
	\end{proof}
	Now we fix a $\overline{\pmb{\theta}}\in A _{n}^{C }\bigcap \text{\bf Bad}_{k_1}$.
	For an $r\in\mathbb{N}$, $r \geq 2$ let
	\begin{align*}
		C:= & \left\{k\in[N]:
		\exists (k_2,\dots ,k_n)\in [N]^{n-1}
		\text{ such that }
		k_i\in \mathfrak{W}^{\theta _{i-1},k_{i-1}},i\in\{2,\dots,n\},
		k\in \mathfrak{W}^{\theta _{n},k_{n}}
		\right\}              \\
		=   & \left\{k\in[N]:
		\mathbf{M}_{\overline{\pmb{\theta}}|_n}(k_1,k)>0
		\right\}
	\end{align*}
	\begin{fact}\label{x03}
		$C=[N]$.
	\end{fact}
	\begin{proof}[Proof of Fact \ref{x03}]
		This follows from the fact that all the expectation matrices are allowable and that $\overline{\pmb{\theta}}|_n$ contains the word $\widetilde{\pmb{\theta}}$, which implies that $\mathbf{M}_{\overline{\pmb{\theta}}|_n}$ is a strictly positive matrix.
	\end{proof}
	Since we assumed that $\overline{\pmb{\theta}}\in \text{\bf Bad}_{k_1}$ we can find a $K>n$
	such that
	\begin{equation}
		\label{x01}
		f_{\overline{\pmb{\theta}}|_K}^{(k_1)}(\mathbf{0})
		>1-\delta.
	\end{equation}
	As earlier, we write $
		\overline{\pmb{\theta}}_{j}^{\ell }=
		(\theta _j,\theta _{j+1},\dots ,\theta _\ell )$.
	For every $i<K$ let
	\begin{equation}
		\label{w99}
		\mathbf{s}_i=
		(\mathbf{s}_i(0),\dots ,\mathbf{s}_i(N-1)):=
		\mathbf{f}_{\overline{\pmb{\theta}} _{i+1}^{K }}(\mathbf{0}).
	\end{equation}
	Then
	\begin{equation*}
		f^{(k_1)}_{\overline{\pmb{\theta}}|_K}(\mathbf{0})=
		f _{\theta _1}^{(k_1) }(\mathbf{s}_1),
		\qquad
		\mathbf{s}_i(\ell )
		=f _{\theta _{i+1}}^{(\ell ) }
		(\mathbf{s}_{i+1})
		=f _{\theta _{i+1}}^{(\ell ) }
		(
		f _{\overline{\pmb{\theta}}_{i+2}^{n }}(\mathbf{0}))
		.
	\end{equation*}
	It follows from Lemma \ref{x34}, and formulas \eqref{x01}, \eqref{w99} that
	\begin{equation*}
		\forall k_2\in \mathfrak{W}^{\theta _1,k_1},\quad
		\mathbf{s}_1(k_2)>1-\delta p_{*}^{-1}.
	\end{equation*}
	
	By repeated application of Lemma \ref{x34} we get 
	\begin{equation*}
		\mathbf{s}_n(k_{n+1})>1-\delta p_{*}^{-n},\quad
		\forall (k_2,\dots ,k_{n+1})\in C^n.
	\end{equation*}
	Using this, by Fact \ref{x03} we get that
$\mathbf{s}_n(k)>1-\delta p_{*}^{-n}$ for all $k\in[N]$.
	This means that
	\begin{equation}
		\label{w96}
		\mathbf{f}_{\overline{\pmb{\theta}}_{n+1}^{K }}(\mathbf{0})=
		\mathbf{s}_n\in B_{\delta p_{*}^{-n}}.
	\end{equation}
	Using that $\mathbf{f}_\theta $ is componentwise monotone and $\mathbf{f}_\theta :[0,1]^N\to[0,1]^N$,
	we get from \eqref{w96} that
	$$
		\lim\limits_{M\to\infty}
		\mathbf{f}_{\overline{\pmb{\theta}}_{n+1}^{M}}(\mathbf{0})\in B_{\delta p_{*}^{-n}}\subset R^C(t),
	$$
	where $t$ and $X_t$ were defined in \eqref{x06}.
	That is we have proved that
	$
		\overline{\pmb{\theta}}\in A _{n}^{C }\bigcap \text{\bf Bad}_{k_1}
		\Longrightarrow
		\sigma^{n}\overline{\pmb{\theta}}
		\in
		X_t.
	$
	In other words we have verified that
	\begin{equation*}
		A _{n}^{C }\cap \text{\bf Bad}_{k_1}\subset
		\sigma ^{-n}X_t.
	\end{equation*}
	Using that $\nu $ is measure preserving, we get that
	\begin{equation*}
		\nu (A _{n}^{C }\cap \text{\bf Bad}_{k_1})
		\leq
		\nu (\sigma ^{-n}X_t)=
		\nu (X_t)<\varepsilon.
	\end{equation*}
\end{proof}
\begin{proof}[Proof of Theorem \ref{z68}, Part (2)]
	Using that $k_1\in[N]$ was arbitrary
	in Lemma \ref{x10},
	we get that the second assertion of Theorem \ref{z68} holds.
\end{proof}

\begin{appendix}
\section{Theorem of Hennion}\label{x82}
Let $\mathbf{B}$ be an $N\times N$ non-negative and allowable ($\mathbf{B}\in\mathscr{A}$) matrix. 
It is easy to check that the norms defined in Section \ref{z49} agree with the ones used in \cite{Hennion97}, i.e. 
\begin{align*}
	\|\mathbf{B} \|_1&=
	\max\left\{
	\sum\limits_{i\in[N]}\sum\limits_{j\in[N]}\mathbf{B}_{i, j}x_j:
	x_j\geq 0,\ \sum\limits_{j\in[N]}x_j=1
	\right\},\text{ and } \\
	(\mathbf{B})_*&=\min\left\{
	\sum\limits_{i\in[N]}\sum\limits_{j\in[N]}\mathbf{B}_{i, j}x_j:
	x_j\geq 0,\ \sum\limits_{j\in[N]}x_j=1
	\right\}.
\end{align*}
Let $\mathcal{B}=\{\mathbf{B}_i\}_{i\in\mathcal{I}}\subset \mathscr{A}$. For $\pmb{\theta}=(\theta_1, \dots, \theta_n)\in\mathcal{I}^n$ we denote
\begin{equation*}
	\mathbf{B}_{\pmb{\theta}}:=\mathbf{B}_{\theta_1}\cdots \mathbf{B}_{\theta_n} \text{ and }
	\mathbf{B}_{\cev{\pmb{\theta}}}:=\mathbf{B}_{\theta_n}\cdots \mathbf{B}_{\theta_1}.
\end{equation*}
\subsection{A corollary of a theorem of Hennion}\label{z48}
The random matrix $X^{(n)}$, which appears in
\cite[Theorem 2]{Hennion97},
corresponds to
$\mathbf{B}^{T}_{\stackrel{\leftarrow}{\overline{\pmb{\theta}}|_n}}$
where $\overline{\pmb{\theta}}$ is chosen randomly according to
the probability measure $\nu $.
\cite[Theorem 2]{Hennion97} has two conditions: The first one is that $m_1<\infty$ and the second one is
Condition $\mathscr{C}$ which corresponds to the first and second part of our assumption that $\mathcal{B}$ is good (see Definition \ref{z19}).
Hence, the conditions of
\cite[Theorem 2]{Hennion97} always holds whenever
$\mathcal{B}\subset \mathscr{A}$ is good with respect to the ergodic measure $\nu $. The conclusion of \cite[Theorem 2]{Hennion97} immediately implies that
\begin{equation}\label{z39}
	\lim\limits_{n\to \infty} \sup_{i\in [N]}\left|\frac{1}{n}\log \left(\mathbf{1}^T \mathbf{B}_{\overline{\pmb{\theta}}|_n}\mathbf{e}_i\right)-\lambda\right|=0, \text{ for } \nu \text{-a.e. }\overline{\pmb{\theta}}\in \Sigma.
\end{equation}
Observe that $\mathbf{1}^T \mathbf{B}_{\overline{\pmb{\theta}}|_n}\mathbf{e}_i$ is the $i$-th column sum of the matrix
$\mathbf{B}_{\overline{\pmb{\theta}}|_n}$. Hence, for $\nu$-almost every $\overline{\pmb{\theta}}\in\Sigma$
	\begin{equation}
		\lim\limits_{n\to \infty} \left|\frac{1}{n}\log(\mathbf{B}_{\overline{\pmb{\theta}}|_n})_{*}-\lambda\right|=0.
	\end{equation}
	\begin{remark}\label{z52}
		Observe that in \cite[Theorem 2]{Hennion97} the role of $\mathbf{1}^T$ and $\mathbf{e}_i$ is interchangeable, meaning that we can consider row, instead of column sums of the matrices. Consequently, we get an analogous result to Lemma \ref{z60} but for the ``row-sum exponent'', namely that
		\begin{equation}\label{x53}
		\lim_{n\to \infty}\frac{1}{n}\log(\min_{i\in[N]}\mathbf{e}_i^T \mathbf{B}_{\overline{\pmb{\theta}}|n}\mathbf{1})=\lambda \quad \text{ for $\nu$-a.e. }\overline{\pmb{\theta}}\in \Sigma.
		\end{equation}
	\end{remark}

\section{Basic properties of multivariate pgfs}\label{x73}

The following is a well-known fact.
\begin{fact}\label{x79}
	Let $X,Y$ be independent random variables on $(\Omega, \mathcal{F}, \mathbb{P})$ taking values in $\mathbb{N}_0^N$ with pgfs $f_X$ and $f_Y$ respectively. Then the pgf $f_{X+Y}$ of the random variable $X+Y$ satisfies $f_{X+Y}(\mathbf{s})=f_X(\mathbf{s})\cdot f_Y(\mathbf{s})$.
\end{fact}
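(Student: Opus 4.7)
The plan is to reduce the claim to the elementary monomial identity $\mathbf{s}^{\mathbf{u}+\mathbf{v}} = \mathbf{s}^{\mathbf{u}} \cdot \mathbf{s}^{\mathbf{v}}$ combined with the product-of-expectations property for independent random variables. First I would recall the definition of the pgf from the paper: for a $\mathbb{N}_0^N$-valued random vector $W$ with distribution $\mu_W$, its pgf is $f_W(\mathbf{s}) = \sum_{\mathbf{z}\in\mathbb{N}_0^N} \mu_W(\{\mathbf{z}\})\,\mathbf{s}^{\mathbf{z}} = \mathbb{E}[\mathbf{s}^W]$, where $\mathbf{s}^{\mathbf{z}} = \prod_{i=0}^{N-1} s_i^{z_i}$. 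Since $\mathbf{s}\in [0,1]^N$, the series converges absolutely and all expectations are finite.

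Next I would write $f_{X+Y}(\mathbf{s}) = \mathbb{E}[\mathbf{s}^{X+Y}]$ and unpack the multi-index exponent componentwise: for each coordinate $i\in[N]$, $(X+Y)_i = X_i + Y_i$, and hence $s_i^{X_i+Y_i} = s_i^{X_i} \cdot s_i^{Y_i}$. Taking the product over $i$ yields $\mathbf{s}^{X+Y} = \mathbf{s}^X \cdot \mathbf{s}^Y$ pointwise on $\Omega$.

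Then I would apply the hypothesis that $X$ and $Y$ are independent. This implies that the random variables $\mathbf{s}^X$ and $\mathbf{s}^Y$ (each a bounded measurable function of $X$ and $Y$ respectively) are independent, so
\begin{equation*}
\mathbb{E}[\mathbf{s}^X \cdot \mathbf{s}^Y] = \mathbb{E}[\mathbf{s}^X]\cdot \mathbb{E}[\mathbf{s}^Y] = f_X(\mathbf{s})\cdot f_Y(\mathbf{s}),
\end{equation*}
completing the identification of $f_{X+Y}(\mathbf{s})$ with the claimed product. There is no genuine obstacle; the only point requiring mild care is the multi-index bookkeeping in step two, but this is purely notational and follows directly from the convention $\mathbf{s}^{\mathbf{z}} = \prod_i s_i^{z_i}$ introduced in the paper.
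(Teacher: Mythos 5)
Your proof is correct: identifying $f_W(\mathbf{s})=\mathbb{E}[\mathbf{s}^W]$, using the pointwise identity $\mathbf{s}^{X+Y}=\mathbf{s}^X\cdot\mathbf{s}^Y$, and factoring the expectation by independence of the bounded functions $\mathbf{s}^X$ and $\mathbf{s}^Y$ is exactly the standard argument. The paper itself offers no proof of Fact \ref{x79} (it is stated as well known), so there is nothing to compare against; your write-up fills that gap correctly and with the natural approach.
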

\begin{fact}\label{x75}
	If $(k, i, \theta)\notin \mathfrak{W}$ (or equivalently $\partial f^{(k)}_{\theta}/\partial s_i(\mathbf{1})=0$) then for all 
	$\mathbf{s}\in[0,1]^N$
	\begin{enumerate}
		\item $\partial f^{(k)}_{\theta}/ \partial s_i(\mathbf{w})=0$ and \label{x72}
		\item $\mathbf{r}_i((f_{\theta}^{(k)})''(\mathbf{w}))$=$\mathbf{c}_i((f_{\theta}^{(k)})''(\mathbf{w}))=\mathbf{0}$. \label{x71}
	\end{enumerate}
\end{fact}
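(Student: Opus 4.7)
The plan is to exploit termwise differentiation of the generating series for $f^{(k)}_\theta$ together with non-negativity of all the coefficients. Recall
$$f^{(k)}_\theta(\mathbf{s}) = \sum_{\mathbf{j}\in \N_0^N} f^{(k)}_\theta[\mathbf{j}]\, \prod_{\ell\in[N]} s_\ell^{j_\ell},$$
so differentiating term by term gives
$$\frac{\partial f^{(k)}_\theta}{\partial s_i}(\mathbf{w}) = \sum_{\substack{\mathbf{j}\in\N_0^N \\ j_i\geq 1}} f^{(k)}_\theta[\mathbf{j}]\, j_i\, w_i^{j_i-1} \prod_{\ell\neq i} w_\ell^{j_\ell}.$$
Evaluating at $\mathbf{w}=\mathbf{1}$ gives $\mathbf{M}_\theta(k,i) = \sum_{\mathbf{j}: j_i\geq 1} f^{(k)}_\theta[\mathbf{j}]\, j_i$, which by hypothesis equals zero. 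Since this is a sum of non-negative terms, every coefficient $f^{(k)}_\theta[\mathbf{j}]$ with $j_i\geq 1$ must vanish.

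For part (1), I would plug this back into the series above: all its terms are indexed by $\mathbf{j}$ with $j_i\geq 1$, hence each coefficient has been shown to be zero, so $\partial f^{(k)}_\theta/\partial s_i \equiv 0$ on $[0,1]^N$. For part (2), the cleanest approach is to differentiate the identity from part (1) one more time. Since $\partial f^{(k)}_\theta/\partial s_i$ is the identically zero function on $[0,1]^N$, its partial derivative with respect to any $s_\ell$ is likewise identically zero; this gives vanishing of the $i$-th column of $(f^{(k)}_\theta)''(\mathbf{w})$. Equality of mixed partial derivatives then gives the $i$-th row.

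There is no substantive obstacle: the whole statement is bookkeeping for power series with non-negative coefficients. The only point meriting a brief justification is that termwise differentiation is legitimate on $[0,1]^N$. This is standard because the series converges at $\mathbf{1}$ to a value at most $1$ and has non-negative coefficients, so it is a power series absolutely convergent on a neighbourhood of $[0,1]^N$ with all derivatives converging uniformly on compact subsets and extending continuously up to the boundary; in particular Schwarz's theorem applies and termwise differentiation is valid at every $\mathbf{w}\in[0,1]^N$.
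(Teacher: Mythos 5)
Your proof is correct and follows essentially the same route as the paper's: differentiate the pgf termwise, observe that $\partial f^{(k)}_{\theta}/\partial s_i(\mathbf{1})=\sum_{\mathbf{j}}f^{(k)}_{\theta}[\mathbf{j}]\,j_i=0$ together with non-negativity forces every coefficient with $j_i\geq 1$ to vanish, and deduce part (1), with part (2) following by differentiating once more (the paper simply says it is immediate). The only slip is in your side remark: convergence of the series at $\mathbf{1}$ does not give absolute convergence on a neighbourhood of $[0,1]^N$ (the radius of convergence may be exactly $1$), but this is harmless here, since for non-negative coefficients the (one-sided) derivative at $\mathbf{1}$ equals $\sum_{\mathbf{j}}f^{(k)}_{\theta}[\mathbf{j}]\,j_i$ by monotone convergence, and once those coefficients vanish the function does not depend on $s_i$ at all, so all the claimed identities on $[0,1]^N$ are trivial.
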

\begin{proof}
	Since \ref{x71} immediately follows from \ref{x72}, we only provide details for the first part.
	By definition 
	\begin{equation*}
		f _{\theta }^{(k) }(\mathbf{s}):=
		\sum_{\mathbf{ z}\in \mathbb{N} _{0}^{N }}
		f _{\theta }^{(k) }[\mathbf{z}]\mathbf{s}^{\mathbf{z}},
		\
		\mathbf{s}\in [0,1]^N,\ k\in [N],\ \theta \in \mathcal{I},
	\end{equation*}
	hence 
	\begin{equation}\label{x51}
		\frac{\partial f^{(k)}_{\theta}}{\partial s_i}(\mathbf{s})=
		\sum_{\mathbf{ z }\in \mathbb{N} _{0}^{N }}
		f _{\theta }^{(k) }[\mathbf{z}]
		z_i s _{i}^{-1 }
		\mathbf{s}^{\mathbf{z}}.
	\end{equation}
	From 
	$$
	0=\partial f^{(k)}_{\theta}/\partial s_i(\mathbf{1})=\sum_{\mathbf{ z}\in \mathbb{N} _{0}^{N }}
	f _{\theta }^{(k) }[\mathbf{z}]z_i
\Longrightarrow
	\partial f^{(k)}_{\theta}/ \partial s_i(\mathbf{s})=0.$$
\end{proof}

\end{appendix}

\begin{acks}[Acknowledgments]
	We would like to say thanks to De-Jun Feng, who called our attention to Hennion's result \cite{Hennion97} which played a crucial role in proving Theorem \ref{z68}. We also would like to say thanks to Michel Dekking and Alex Rutar for their helpful comments and suggestions.
\end{acks}

\begin{funding}
	VO is supported by National Research,
Development and Innovation Office - NKFIH, Project FK134251.
	
KS is supported by National Research,
Development and Innovation Office - NKFIH, Project K142169.

Both authors have received funding from the HUN-REN Hungarian Research
Network.
\end{funding}

\bibliographystyle{imsart-nameyear} % Style BST file (imsart-number.bst or imsart-nameyear.bst)
\bibliography{references}       % Bibliography file (usually '*.bib')

%\bibliographystyle{plain}
%\bibliography{references}
\end{document}